\newtheorem{theorem}{Theorem}[section]
\newtheorem{lemma}[theorem]{Lemma}
\newtheorem{corollary}[theorem]{Corollary}
\newtheorem{conjecture}[theorem]{Conjecture}
\newtheorem{question}[theorem]{Question}
\theoremstyle{definition}
\newtheorem{definition}[theorem]{Definition}
\newcommand{\eps}{\varepsilon}
\newcommand{\sm}{\setminus}
\newcommand{\1}{{\bf 1}}
\newcommand{\T}{\mathcal{T}}
\newcommand{\U}{\mathcal{U}}
\newcommand{\floor}[1]{\left\lfloor #1 \right\rfloor}
\newcommand{\ceiling}[1]{\left\lceil #1 \right\rceil}
\newcommand{\dir}[1]{\overrightarrow{#1}}
\DeclareMathOperator{\RT}{\mathbf{RT}}
\DeclareMathOperator{\ER}{ER}
\DeclareMathOperator{\BE}{BE}
\title{Triangle-tilings in graphs without large independent sets}
\author{J\'ozsef Balogh\thanks{Department of Mathematical Sciences,
University of Illinois at Urbana-Champaign, Urbana, Illinois 61801, USA. {\tt
jobal@math.uiuc.edu}. Research is partially supported by NSA Grant H98230-15-1-0002, NSF
Grant
DMS-1500121 and Arnold O. Beckman Research Award (UIUC Campus Research Board 15006).},
Andrew McDowell\thanks{School of Mathematics, University of Birmingham, Birmingham, B15 2TT, United Kingdom. {\tt
a.j.mcdowell@bham.ac.uk}.},
Theodore Molla\thanks{Department of Mathematical Sciences,
 University of Illinois at Urbana-Champaign, Urbana, Illinois 61801, USA. {\tt
molla@illinois.edu}. Research is partially supported by NSF Grant
DMS-1500121.}, 
Richard Mycroft\thanks{School of Mathematics, University of Birmingham, Birmingham, B15 2TT, United Kingdom. {\tt
r.mycroft@bham.ac.uk}. Research is partially supported by EPSRC grant EP/M011771/1. \newline The authors are grateful to the BRIDGE strategic alliance between the University of Birmingham
and the University of Illinois at Urbana-Champaign. This research was conducted as part of the
�Building Bridges in Mathematics� BRIDGE Seed Fund project.}}
\date{\today}
\begin{document}
\maketitle

\begin{abstract}
  We study the minimum degree necessary to
  guarantee the existence of perfect and almost-perfect triangle-tilings 
  in an $n$-vertex graph~$G$ with sublinear independence number.
  In this setting, we show that if $\delta(G) \ge n/3 + o(n)$ then $G$ 
  has a triangle-tiling covering all but at most four vertices. Also, 
  for every $r \ge 5$, we asymptotically determine the minimum degree 
  threshold for a perfect triangle-tiling under the additional assumptions 
  that~$G$ is $K_r$-free and $n$ is divisible by $3$.
\end{abstract}

\section{Introduction}

A \textit{triangle-tiling} in a graph $G$ is a collection $\T$ of vertex-disjoint triangles in $G$. We say that $\T$ is \textit{perfect} if $|\T| = n/3$, where $n$ is the order of $G$. A trivial necessary condition for the existence of a perfect triangle-tiling is that 3 divides $n$.
We let $V(\T) := \bigcup_{T \in \T} V(T)$ and say $\T$ \textit{covers} $U \subseteq V(G)$ (respectively $v \in V(G)$) 
when $U \subseteq V(\T)$ (respectively $v \in V(\T)$),
so a perfect triangle-tiling covers every vertex of the host graph. 
Given disjoint sets $A$ and $B$ which partition~$V(G)$, we say that a triangle $T$ in $G$ is an \emph{$A$-triangle} if $T$ contains two vertices of~$A$ and one vertex of $B$, and likewise that $T$ is a \emph{$B$-triangle} if $T$ contains two vertices of $B$ and one vertex of $A$.
Observe that if $|A| = 1 \pmod 3$ and $|B| = 2 \pmod 3$, there are no $B$-triangles in $G$ and also there is no pair of vertex-disjoint $A$-triangles in $G$, then $G$ does not have a perfect triangle-tiling.
In that case, we call the ordered pair $(A, B)$ a \emph{divisibility barrier} in $G$ (note that order is important here).
Similarly, if $A \subseteq V(G)$ has size $|A| \geq 2n/3 + r$ for some $r > 0$, but $G[A]$ has no triangles, then every triangle-tiling in $G$ contains at most $n - |A| \leq n/3 - r$ triangles, and so leaves at least $3r$ vertices uncovered. We call such a set $A$ a \emph{space barrier}.

The classical Corr\'adi-Hajnal theorem~\cite{CH} states that if~$G$ has minimum degree $\delta(G) \geq 2n/3$, and~$n$ is divisible by~$3$, then~$G$ contains a perfect triangle-tiling. The minimum degree condition of this result is easily seen to be best-possible by considering, for an arbitrary $m \in \mathbb{N}$, the complete tripartite graph $G_1(m)$ with vertex classes of size $m-1, m$ and $m+1$. Indeed, $G_1(m)$ then has $n := 3m$ vertices and $\delta(G_1(m)) \geq 2m-1 = 2n/3-1$, but $G_1(m)$ has no perfect triangle-tiling, as the union of the two largest vertex classes is a space barrier. Observe, however, that $G_1(m)$ contains large independent sets.
By proving the following theorem, Balogh, Molla and Sharifzadeh~\cite{BMS} recently showed that the minimum degree condition can be significantly weakened if we additionally assume that~$G$ has no large independent set. Throughout this paper we write~$\alpha(G)$ to denote the independence number of~$G$.

\begin{theorem}[{\cite[Theorem~1.2]{BMS}}] \label{BMSthm}
For every $\omega > 0$ there exist $n_0, \gamma > 0$ such that the following holds for every integer $n \geq n_0$ which is divisible by 3. If $G$ is a graph on $n$ vertices with $\delta(G) \geq n/2 + \omega n$ and $\alpha(G) \leq \gamma n$, then $G$ contains a perfect triangle-tiling.
\end{theorem}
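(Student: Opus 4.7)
I would use the absorbing method of R\"odl--Ruci\'nski--Szemer\'edi, splitting the argument into three standard steps: construct a small absorbing set $M$, find a triangle-tiling of $G \setminus M$ covering all but a tiny set $U$, and then perfectly tile $G[M \cup U]$ via the absorbing property.

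\textbf{Absorbing set.} The goal is a set $M \subseteq V(G)$ with $|M| \leq \beta n$ and $|M|$ divisible by $3$, such that for every $U \subseteq V(G) \setminus M$ with $|U| \leq \beta^2 n$ and $|U|$ divisible by $3$, the graph $G[M \cup U]$ admits a perfect triangle-tiling. By the usual argument, it suffices to show that every ordered triple $(u,v,w) \in V(G)^3$ has $\Omega(n^6)$ \emph{absorbers}: disjoint 6-sets $\{x_1,\dots,x_6\}$ for which both $G[\{x_1,\dots,x_6\}]$ and $G[\{u,v,w,x_1,\dots,x_6\}]$ have perfect triangle-tilings. The minimum-degree hypothesis gives every edge $ab$ a common neighborhood of size at least $2\omega n$, and since $\alpha(G) \leq \gamma n$ with $\gamma \ll \omega$, the Ramsey--Tur\'an bound forces $\Omega(n^2)$ edges inside each such common neighborhood, so each edge sits in $\Omega(n)$ triangles. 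This is more than enough flexibility to build absorbers for any triple, and $M$ is then produced by random sampling together with an alteration step (or Janson's inequality).

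\textbf{Near-perfect tiling.} Next, apply Szemer\'edi's regularity lemma to $G \setminus M$ to obtain a reduced graph $R$ on $L$ clusters with $\delta(R) \geq (1/2 + \omega/2)L$; a standard lifting argument (selecting one vertex per cluster greedily while maintaining independence) shows $\alpha(R) = o(L)$. Since $\delta(R) < 2L/3$, Corr\'adi--Hajnal does not apply and we cannot hope for a triangle factor in $R$. Instead, I would find a \emph{perfect fractional triangle-tiling} in $R$ -- a weighting $\mu$ on triangles of $R$ with $\sum_{T \ni v} \mu(T) = 1$ for each $v \in V(R)$ -- and then convert it (by splitting each cluster in proportion to its fractional memberships and tiling each regular triple of sub-clusters almost perfectly) into a triangle-tiling of $G \setminus M$ missing at most $o(n)$ vertices. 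Finally, since $3 \mid n$ and $3 \mid |M|$, the leftover set $U$ also satisfies $3 \mid |U|$, and the absorbing property of $M$ completes the perfect tiling.

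\textbf{Main obstacle.} The crux is producing the perfect fractional triangle-tiling of $R$: a Ramsey--Tur\'an analogue of fractional Hajnal--Szemer\'edi, asserting that under $\delta(R) \geq L/2 + \omega L/2$ and $\alpha(R) = o(L)$ the fractional triangle-cover number of $R$ equals $L/3$. I would attack this either by LP duality -- constructing a dual feasible weighting certified by the many-triangles-per-edge property of common neighborhoods -- or by a direct weighted-greedy construction in which local adjustments redistribute weight using that every edge extends to many triangles. A secondary technicality is ruling out \emph{divisibility barriers}: under our hypotheses, the large side of any putative barrier $(A,B)$ would contain linearly many edges (by $\alpha(G) = o(n)$) extending to $A$- or $B$-triangles (by $\delta(G) \geq n/2 + \omega n$), so no such barrier can exist, and the absorbing step suffers no obstruction.
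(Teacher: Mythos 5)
This statement is not proved in the present paper: it is Theorem~1.2 of Balogh--Molla--Sharifzadeh, imported as a black box and used only in the final step of the proof of Theorem~\ref{main} (the sparse-cut case). So your proposal can only be judged on its own merits, and while its architecture (absorption, plus regularity, plus a fractional tiling of the reduced graph) is in the right family, two of its key steps fail as written. The first is the absorber count. You claim that $\alpha(G)\le\gamma n$ forces $\Omega(n^2)$ edges inside the common neighbourhood $N(a)\cap N(b)$, which has size about $2\omega n$. This is false: applying Tur\'an's theorem to the complement, a graph on $2\omega n$ vertices with independence number at most $\gamma n$ is guaranteed only about $2\omega^2 n/\gamma$ edges, and this is attained by a disjoint union of $\gamma n$ cliques of constant size $2\omega/\gamma$. (Ramsey--Tur\'an theory gives \emph{upper} bounds on edge counts of $K_r$-free graphs; it gives no quadratic lower bound here.) Hence each edge is only guaranteed to lie in $\Omega(n)$ triangles, so a triple $(u,v,w)$ is only guaranteed $\Omega(n^3)$ six-vertex absorbers rather than the $\Omega(n^6)$ required by the standard random-selection construction of $M$: with $|M|\approx\beta n$, the expected number of selected absorbers for a fixed triple would be $o(1)$. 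This shortage of absorbers is exactly what makes the theorem nontrivial.

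The second gap is in the near-perfect tiling. The reduced graph $R$ does not inherit $\alpha(R)=o(L)$: an independent set in $R$ only certifies that the regularised subgraph $G'$ has no edges between the corresponding clusters, while $G$ may still have up to $(\eps+d)n$ such edges at every vertex, and all edges inside clusters are discarded; the greedy ``one vertex per cluster'' lifting therefore cannot avoid neighbours and breaks down. Without a bound on $\alpha(R)$, the condition $\delta(R)\ge(1/2+\omega/2)L$ alone does not yield a perfect fractional triangle-tiling: a complete tripartite graph with parts of sizes roughly $L/2$, $3L/10$ and $L/5$ has minimum degree about $L/2$ but fractional triangle-tiling number only $3L/5$. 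And even granting small $\alpha(R)$, the fractional Corr\'adi--Hajnal statement you flag as the ``main obstacle'' is itself a substantial theorem (essentially the weighted-graph result proved in the appendix of the cited source) and cannot be left at the level of ``LP duality or a weighted greedy''. It is instructive that the present paper, in its own lower-degree regime, sidesteps both difficulties by never seeking triangles in the reduced graph: it finds only an $(\eta,1-\eta)$-weighted fractional \emph{matching} in $R$ from the minimum degree of $R$ alone (Lemma~\ref{wfracm}, via Farkas' Lemma), and manufactures the triangles inside regular pairs directly from $\alpha(G)\le\gamma n$.
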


For an arbitrary $m \in \mathbb{N}$, the graph $G_2(m)$ consisting of two copies of $K_{3m+2}$ intersecting in a single vertex has $n := 6m+3$ vertices, minimum degree $\delta(G_2(m)) = 3m + 1 = \floor{n/2}$ and independence number two. Moreover, $G_2(m)$ has a divisibility barrier $(A, B)$, where $B$ is the vertex set of one of the copies of $K_{3m+2}$ and $A = V(G_2(m))\sm B$. This example demonstrates that the minimum degree condition of Theorem~\ref{BMSthm} is best-possible up to the $\omega n$ additive error term. 
Noga Alon suggested that if one only wants a triangle-tiling that covers all but a constant number
of vertices, then perhaps the condition $\delta(G) \ge (1/3 + o(1))n$ is sufficient.
In this paper, we show that this is indeed the case, by proving that if $\delta(G) \ge (1/3 + o(1))n$ and $\alpha(G) = o(n)$, 
then $G$ has a triangle-tiling covering all but at most four vertices. Furthermore, under the additional assumptions that $G$ has no divisibility barrier and 3 divides $n$, we show that $G$ contains a perfect triangle-tiling. 

\begin{theorem} \label{main}
For every $\omega > 0$ there exist $n_0, \gamma > 0$ such that if $G$ is a graph on $n \geq n_0$ vertices with $\delta(G) \geq n/3 + \omega n$ and $\alpha(G) \leq \gamma n$, then 
\begin{enumerate}[label=(\alph*), noitemsep]
\item $G$ contains a triangle-tiling covering all but at most four vertices of $G$, and
\item if $3$ divides $n$ and $G$ contains no divisibility barrier, then $G$ contains a perfect triangle-tiling.
\end{enumerate}
\end{theorem}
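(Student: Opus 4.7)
The plan is to apply the absorbing method of R\"odl, Ruci\'nski and Szemer\'edi, in the form used by Balogh, Molla and Sharifzadeh to prove Theorem~\ref{BMSthm}, but adapted to the weaker minimum-degree condition and to the possible appearance of divisibility-type obstructions in the leftover set. The overall strategy is the standard three-phase one: (i) find a small \emph{absorbing set} $A \subseteq V(G)$ with strong flexibility; (ii) find a near-perfect triangle-tiling of $G-V(A)$; (iii) use $A$ to mop up the leftover.

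For phase (i), the central combinatorial step is to show that for every triple $(x, y, z)$ of vertices of $G$ there exist $\Omega(n^s)$ \emph{triple-absorbers} of some bounded size $s$: sets $S \subseteq V(G) \setminus \{x,y,z\}$ such that both $G[S]$ and $G[S \cup \{x,y,z\}]$ admit perfect triangle-tilings. The hypothesis $\delta(G) \geq n/3 + \omega n$ guarantees large common neighbourhoods among selected vertices, while $\alpha(G) \leq \gamma n$ is used to promote those common neighbourhoods into configurations containing many triangles (any large set with few triangles would yield a large independent set). Once such absorber counts are in place, a standard probabilistic selection produces a set $A$ with $|A| = o(n)$ which is itself tileable and which absorbs any sufficiently small triple.

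For phase (ii), I would apply a covering argument in $G' := G - V(A)$ to obtain a triangle-tiling $\T$ leaving only $o(n)$ uncovered vertices. Here the key point is that $n/3$ is precisely the fractional triangle-tiling threshold, and the condition $\alpha(G) = o(n)$ rules out the extremal tripartite-type configurations that witness tightness of this threshold, so a regularity-plus-fractional-tiling argument on the reduced graph should provide the near-perfect integer tiling. In phase (iii) I would group the uncovered vertices into triples and absorb them one by one using $A$. For part~(b), the hypotheses $3 \mid n$ and the absence of a divisibility barrier let me ensure the leftover has size divisible by $3$ and can be partitioned into absorbable triples; for part~(a), I absorb greedily and then perform a short case analysis on $n \bmod 3$ and the structure of the remaining vertices to bound the leftover by $4$.

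The main obstacle is the absorbing construction itself, and specifically making it robust enough for part~(b): we need $A$ to absorb every small enough triple, including triples that locally look like pieces of a divisibility barrier, so the combinatorial lemma guaranteeing $\Omega(n^s)$ absorbers per triple must be proved without any divisibility hypothesis on the triple. This is where the small independence number must be leveraged most carefully, as it is what prevents the local obstructions (essentially, triangle-free subsets of size roughly $2n/3$) that would otherwise kill the count. A secondary obstacle is the near-perfect tiling step: since $\delta(G)$ only barely exceeds $n/3$, we cannot invoke Theorem~\ref{BMSthm} directly on $G-V(A)$, and the almost-tiling has to be extracted by hand from the fractional/regularity picture using the $\alpha(G) = o(n)$ hypothesis.
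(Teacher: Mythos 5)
Your plan hinges on the lemma that \emph{every} triple $(x,y,z)$ has $\Omega(n^s)$ absorbers, proved ``without any divisibility hypothesis on the triple,'' and this lemma is false under the hypotheses of the theorem. Take $G$ to be two disjoint cliques $K_{3m+2}\sqcup K_{3m+2}$ (the graph $G_3(m)$ from the paper, with $\delta(G)=n/2-1$ and $\alpha(G)=2$), or two disjoint copies of $K_{3m}$ if you want $3\mid n$. Every triangle of $G$ lies inside one clique, so for a set $S$ to be an absorber for a triple with one vertex $x$ in clique $C_1$ and two vertices in $C_2$ we would need both $3\mid |S\cap C_1|$ (so that $G[S]$ is perfectly tileable) and $3\mid |S\cap C_1|+1$ (so that $G[S\cup\{x,y,z\}]$ is perfectly tileable), which is impossible: such triples have \emph{no} absorbers at all. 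The small independence number cannot rescue this --- the counterexample has $\alpha(G)=2$. This disconnected structure is not a pathology you can wave away: it is exactly what forces the ``all but four vertices'' bound in part (a) and is the only place the divisibility-barrier hypothesis of part (b) is needed. A secondary, related weakness is that even your phase~(iii) for part~(a) (``absorb greedily and then a short case analysis bounds the leftover by $4$'') has no mechanism producing the number $4$; that bound comes from the graph splitting into at most two dense pieces, each of which can leave at most two vertices uncovered.

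The paper's proof is organized around precisely this dichotomy. If $G$ has a sparse cut $\{A,B\}$ with $|A|,|B|\ge n/3$ and $O(\omega^2 n^2)$ crossing edges, one shows $\delta(G[A]),\delta(G[B])\ge n/6+\omega n/2$, deletes a few triangles (using the no-divisibility-barrier hypothesis to fix the residues mod $3$ for part (b)), and applies Theorem~\ref{BMSthm} to each side separately. If $G$ has no sparse cut, the reduced graph is connected, and instead of vertex-by-vertex absorption the paper uses a perfect $(\eta,1-\eta)$-weighted fractional matching with $\eta\approx 1/3$ (Lemma~\ref{wfracm}, via Farkas' lemma) --- exploiting that a triangle meets a regular pair in ratio $1{:}2$ --- together with a bounded-degree spanning tree of the reduced graph, tiling cluster pairs from the leaves inward and finishing in a dense ``core'' by the blow-up lemma or by the robustly-matchable sets of Lemma~\ref{absorber}. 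Your phase~(ii) intuition (that $n/3$ is the fractional threshold once large independent sets are excluded) is the right one for the connected case, but without the sparse-cut case analysis the argument cannot be completed for either part of the theorem.
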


Observe that for an arbitrary $m \in \mathbb{N}$, the graph $G_3(m)$ consisting of two disjoint copies of $K_{3m+2}$ has $n := 6m+4$ vertices, minimum degree $\delta(G_3(m)) = 3m + 1 = n/2-1$ and independence number two, but every triangle-tiling in $G_3(m)$ covers at most $n-4$ vertices. This demonstrates that the conditions of Theorem~\ref{main} do not guarantee a triangle-tiling which leaves fewer than four vertices uncovered. Furthermore, in Section~\ref{sec:examples} we give a construction showing that the $\omega n$ error term in the minimum degree condition of Theorem~\ref{main} cannot be removed completely.

The relationship between the results in this paper and the Corr\'adi-Hajnal theorem
is clearly analogous to the relationship between Ramsey-Tur\'an theory and Tur\'an's theorem,
as Ramsey-Tur\'an theory is concerned with the maximum possible number of edges in 
an $H$-free graph on $n$ vertices with some upper bound on $\alpha(G)$.
More precisely, in classical Ramsey-Tur\'an theory the principle object of study is the function 
$\RT(n,H,m)$, which is defined 
to be the maximum number of edges in an $H$-free, 
$n$-vertex graph with independence number at most $m$, 
whenever such a graph exists for $n$, $H$ and $m$.
The asymptotic value of $\RT(n,K_r,o(n))$ was established for odd $r$ by 
Erd\H{o}s and S\'os~\cite{ES} and for even $r$ by
Erd\H{o}s, Hajnal, S\'os and Szemer\'edi~\cite{EHSS}, giving the following theorem.

\begin{theorem}[{\cite[Theorem 1]{ES} and~\cite[Theorem 1]{EHSS}}]\label{rt-thm}
  For every $r \ge 3$, we define
  \begin{equation*}
    f_{\text{RT}}(r) := 
    \begin{cases}
      \frac{r - 3}{r - 1}    & \text{if $r$ is odd,}\\
      \frac{3r - 10}{3r - 4} & \text{if $r$ is even}.
    \end{cases}
  \end{equation*}
  \begin{enumerate}[label=(\alph*), noitemsep]
    \item
      For every $\omega > 0$, there exists $\gamma, n_0 > 0$ such that
      if $G$ is a graph on $n \geq n_0$ vertices with $\alpha(G) \le \gamma n$ and with 
      at least $(f_{\text{RT}}(r)+\omega) \binom{n}{2}$ edges, then $G$ contains a copy of $K_r$. 
    \item
      For every $\omega > 0$ and $\gamma > 0$, there exists $n_0 > 0$ such
      that for every $n \ge n_0$, there exists a $K_r$-free graph $G := G_{\text{RT}}(n, r, \omega, \gamma)$
      on $n$ vertices such that $\delta(G) \ge (f_{\text{RT}}(r) - \omega) n$ and $\alpha(G) \le \gamma n$.
  \end{enumerate}
\end{theorem}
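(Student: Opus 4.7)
The plan is to treat parts (a) and (b) separately, with part (b) further split according to the parity of $r$.

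For part (b) with odd $r = 2k+1$, partition the $n$ vertices into $k$ classes $V_1, \dots, V_k$ of size $\floor{n/k}$ or $\ceiling{n/k}$, include every edge between distinct classes, and inside each class embed a $K_3$-free graph with independence number at most $\gamma n$ (available via the Bollob\'as--Erd\H{o}s construction or via a standard off-diagonal Ramsey-graph construction, provided $\gamma > 0$ and $n$ is large enough). Any clique of the resulting graph $G$ meets each $V_i$ in at most two vertices, since $G[V_i]$ is $K_3$-free, so $\omega(G) \le 2k < r$; and because distinct classes are joined completely, any independent set lies entirely inside a single $V_i$, giving $\alpha(G) \le \gamma n$. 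The minimum degree is $(k - 1)\floor{n/k} = (f_{\RT}(r) - o(1))n$.

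For even $r = 2k$ the construction is more delicate: use $k - 2$ ``complete'' classes together with a single ``special'' class carrying a $K_4$-free Bollob\'as--Erd\H{o}s graph of independence number $o(n)$. A $K_{2k}$ in this graph would use at most two vertices from each complete class and at most three from the special class, totalling $2(k-2) + 3 = 2k - 1 < 2k$, a contradiction. Tuning the class sizes so that the minimum degree is maximized yields the threshold $f_{\RT}(2k) = (3k - 5)/(3k - 2)$.

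For part (a), I would induct on $r$ via the step $r \mapsto r - 2$. The base cases $r = 3$ (the Erd\H{o}s--S\'os statement $\RT(n, K_3, o(n)) = o(n^2)$) and $r = 4$ (Szemer\'edi's theorem $\RT(n, K_4, o(n)) = (1/8 + o(1))n^2$) carry essentially all the substantive content. For the inductive step, pick an edge $uv$ in $G$ both of whose endpoints have high degree, let $W := N(u) \cap N(v)$, and use an averaging/convexity argument (counting cherries $u$--$w$--$v$) to show that $|W|$ is large and that $G[W]$ has density at least $f_{\RT}(r-2) + \omega'$, with $\alpha(G[W]) \le \gamma n \le \gamma' |W|$. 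The induction hypothesis then yields $K_{r-2} \subseteq G[W]$, which together with $\{u,v\}$ gives a copy of $K_r$ in $G$. The arithmetic of this reduction matches $f_{\RT}(r)$ to $f_{\RT}(r-2)$ on both the odd and the even branches of the formula, and for part (b) an analogous degree computation confirms that the constructed graphs are tight.

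The main obstacle is the base cases: showing that sublinear independence number plus $\Omega(n^2)$ edges forces a triangle is historically difficult and requires the regularity method or delicate probabilistic analysis, while the $K_4$ case is subtler still because matching sharpness depends on the geometric Bollob\'as--Erd\H{o}s construction on the sphere, whose independence-number analysis is technically demanding. Outside of $r \in \{3, 4\}$, the inductive reduction is essentially mechanical.
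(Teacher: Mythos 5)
This theorem is not proved in the paper: part (a) is imported directly from Erd\H{o}s--S\'os and Erd\H{o}s--Hajnal--S\'os--Szemer\'edi, and only the construction for part (b) is reproduced, at the end of Section~\ref{sec:examples}. Your part (b) is correct and essentially identical to that construction: for odd $r=2k+1$ a balanced complete $k$-partite graph with a triangle-free Erd\H{o}s graph on each part, and for even $r=2k$ a complete $(k-1)$-partite graph with one enlarged part carrying a $K_4$-free Bollob\'as--Erd\H{o}s graph. Your size-tuning (special part of size $\tfrac{4n}{3k-2}$, ordinary parts of size $\tfrac{3n}{3k-2}$) reproduces the paper's equitable partition into $3k-2$ blocks and gives the correct value $\tfrac{3k-5}{3k-2} = f_{\text{RT}}(2k)$; the clique and independence-number checks are as in the paper.

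Part (a) is where the problems lie. First, the base case $r=3$ is not ``historically difficult'': in a triangle-free graph every neighbourhood is an independent set, so $\Delta(G) \le \alpha(G) \le \gamma n$ and hence $e(G) \le \gamma n^2/2$, contradicting $e(G) \ge \omega \binom{n}{2}$ once $\gamma$ is small. (The genuinely hard base case is $r=4$, i.e.\ Szemer\'edi's bound $\RT(n,K_4,o(n)) \le (1/8+o(1))n^2$, which you rightly defer to the literature.) Second, and more seriously, the inductive step is not ``essentially mechanical''. After cleaning to minimum degree $(f_{\text{RT}}(r)+\omega')n$, an arbitrary edge $uv$ only guarantees $|W| = |N(u)\cap N(v)| \ge (2f_{\text{RT}}(r)-1)n$, while each $w \in W$ has at least $|W| - (1-f_{\text{RT}}(r))n$ neighbours in $W$; for $G[W]$ to reach density $f_{\text{RT}}(r-2)+\omega''$ this way one needs $|W| \ge \frac{1-f_{\text{RT}}(r)}{1-f_{\text{RT}}(r-2)}\, n = f_{\text{RT}}(r)\, n$ (the identity holds on both parity branches), which strictly exceeds $(2f_{\text{RT}}(r)-1)n$. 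In the extremal example this is exactly the gap between an edge joining two parts (whose common neighbourhood is too small) and an edge inside a part (whose common neighbourhood is the extremal $K_{r-2}$ configuration of the right size); finding an edge of the second kind is precisely where $\alpha(G)=o(n)$ must be used and is the real content of the Erd\H{o}s--S\'os argument. ``Counting cherries'' gestures in the right direction but does not close this gap, so as written your part (a) is an outline of the cited proofs rather than a proof.
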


Observe that for any $r \ge 3$, $\omega, \gamma > 0$ and each sufficiently large $n$ divisible by $6$, the graph $G_4(n)$
on $n$ vertices consisting of the disjoint union of $G_{\text{RT}}(\frac{n}{2} - 1, r, \omega, \gamma)$ and
$G_{\text{RT}}(\frac{n}{2} + 1, r, \omega, \gamma)$ is $K_r$-free,
has minimum degree $\delta(G_4(n)) \geq \left(\frac{f_{\text{RT}}(r)}{2} - \omega\right)n$
and independence number at most $\gamma n$. However, as $G_4(n)$ 
contains a divisibility barrier, it has no perfect triangle-tiling. 
Although the construction of $G_{\text{RT}}(n, r, \omega, \gamma)$ was given in
\cite{ES} (when $r$ is odd) and~\cite{EHSS} (when $r$ is even),
for completeness, we describe $G_{\text{RT}}(n, r, \omega, \gamma)$ at the end of Section~\ref{sec:examples}.

By combining Theorems~\ref{main} and~\ref{rt-thm} we determine, for every $r \geq 5$, the asymptotic minimum degree threshold for a perfect triangle-tiling in a $K_r$-free graph with sublinear independence number. 
Indeed, Corollary~\ref{krfree} does this for $r \geq 7$, and the thresholds for $r=5$ and $r=6$ follow, as discussed after the proof.

\begin{corollary} \label{krfree}
For every $r \ge 7$ and $\omega > 0$ there exist $n_0, \gamma > 0$ such that the following holds for every integer $n \geq n_0$ which is divisible by 3. 
If $G$ is a $K_r$-free graph on $n$ vertices with 
$\delta(G) \geq \frac{f_{\text{RT}}(r)}{2}n + \omega n$ and $\alpha(G) \leq \gamma n$, then $G$ contains a perfect triangle-tiling.
\end{corollary}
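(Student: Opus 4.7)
\emph{Proof plan.} First I verify that for $r \ge 7$ the minimum-degree hypothesis of the corollary implies that of Theorem~\ref{main}: one has $f_{\text{RT}}(r)/2 \ge 1/3$, since for odd $r \ge 7$, $f_{\text{RT}}(r)/2 = (r-3)/(2(r-1)) \ge 4/12 = 1/3$, and for even $r \ge 8$, $f_{\text{RT}}(r)/2 = (3r-10)/(6r-8) \ge 14/40 > 1/3$. Hence $\delta(G) \ge (f_{\text{RT}}(r)/2 + \omega)n \ge n/3 + \omega n$, so, after shrinking $\gamma$ if necessary below the value supplied by Theorem~\ref{main}, it suffices to rule out divisibility barriers in $G$ and then invoke Theorem~\ref{main}(b).

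Suppose for contradiction that $(A,B)$ is a divisibility barrier. The `no $B$-triangle' condition forces $N(v)\cap B$ to be independent in $G$ for every $v \in A$, so $|N(v)\cap B| \le \alpha(G) \le \gamma n$, giving $d_A(v) \ge (f_{\text{RT}}(r)/2+\omega-\gamma)n$ and hence $|A| > n/3$. To exploit the `no pair of vertex-disjoint $A$-triangles' condition, I would observe that the $3$-uniform hypergraph of $A$-triangles has matching number at most one, so by the standard bound $\tau\le 3\nu$ for $3$-uniform hypergraphs (take the vertex set of a maximum matching as a cover) there is a set $S\subseteq V(G)$ with $|S|\le 3$ meeting every $A$-triangle. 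Writing $A' := A \sm S$ and $B' := B \sm S$, the induced graph $G[A' \cup B']$ contains neither an $A$-triangle nor a $B$-triangle, and the independence argument applied symmetrically then gives $d_{A'}(v) \ge (f_{\text{RT}}(r)/2+\omega-\gamma)n - O(1)$ for $v \in A'$ and $d_{B'}(v) \ge (f_{\text{RT}}(r)/2+\omega-\gamma)n - O(1)$ for $v \in B'$; in particular both $|A'|$ and $|B'|$ exceed $n/3 - O(1)$.

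Since $|A'|+|B'|\le n$, we may assume WLOG that $|A'|\le n/2$ (the other case being symmetric). Using $n\ge 2|A'|$ we then obtain $\delta(G[A']) \ge (f_{\text{RT}}(r)+2\omega-2\gamma)|A'| - O(1)$, so $G[A']$ has edge density at least $f_{\text{RT}}(r)+\omega$ once $\gamma$ is chosen small enough relative to $\omega$. Together with $\alpha(G[A']) \le \gamma n \le 3\gamma |A'|$, which lies below the $\gamma$-threshold supplied by Theorem~\ref{rt-thm}(a) (applied with parameter, say, $\omega/2$) once $\gamma$ is small enough, this yields a copy of $K_r$ inside $G[A'] \subseteq G$, contradicting the $K_r$-freeness of $G$. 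The main obstacle is the vertex-cover reduction via $\tau \le 3\nu$, which converts the asymmetric `no pair of vertex-disjoint $A$-triangles' into a clean `no $A$-triangles in $G-S$' for a bounded set $S$; once this is in hand, the remainder is a routine Ramsey--Tur\'an calculation, symmetric in $A'$ and $B'$.
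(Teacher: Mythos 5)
Your proposal is correct and follows essentially the same route as the paper: reduce to ruling out divisibility barriers via Theorem~\ref{main}(b), show the barrier forces one part of size at most $n/2$ to have internal degree about $(f_{\text{RT}}(r)/2+\omega)n$, and derive a $K_r$ from Theorem~\ref{rt-thm}(a). The only difference is cosmetic: where you delete a cover $S$ of size at most $3$ (via $\tau\le 3\nu$) to kill all cross-triangles, the paper instead notes directly that at most one vertex of the smaller part can have more than $\gamma n+2$ neighbours in the larger part, since two such vertices would yield two disjoint triangles; both devices yield the same degree bound.
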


\begin{proof}
Given $\omega > 0$, choose $\gamma$ small enough and $n_0$ large enough to apply Theorem~\ref{main} with the same constants there as here and so that we may apply Theorem~\ref{rt-thm}(a) with $3\gamma$ and $n_0/3$ in place of $\gamma$ and $n_0$ respectively. We also insist that $\gamma n_0 + 2 \leq \omega n_0/2$. Since $\frac{f_{\text{RT}}(r)}{2} \geq \frac{1}{3}$ for $r \geq 7$, by Theorem~\ref{main}(b) it suffices to prove that no $K_r$-free graph on $n \geq n_0$ vertices with $\delta(G) \geq \frac{f_{\text{RT}}(r)}{2}n + \omega n$ and $\alpha(G) \leq \gamma n$ contains a divisibility barrier. So let $G$ be such a graph, and suppose for a contradiction that $(X, Y)$ is a divisibility barrier in $G$. Let $A$ be the smaller of $X$ and $Y$, and let $B$ be the larger, so $|A| \leq n/2$. By definition of a divisibility barrier, if $A = Y$ then there is no pair of vertex-disjoint $B$-triangles in $G$, whilst if $A = X$ then there are no $B$-triangles in $G$ at all. It follows that at most one vertex $a \in A$ has more than $\gamma n + 2$ neighbours in $B$, as given two such vertices $a, a' \in A$ we could use the fact that $\alpha (G) \leq \gamma n$ to choose an edge $bc$ in $N(a) \cap B$ and then an edge $b'c'$ in $(N(a') \cap B) \sm e$ to obtain a pair of vertex-disjoint $B$-triangles $abc$ and $a'b'c'$ in $G$. So at least $|A| - 1$ vertices of $A$ have at least $\delta(G) - \gamma n - 2 \geq \frac{f_{\text{RT}}(r)}{2}n + \frac{\omega}{2}n$ neighbours in $A$. So in particular $|A| \geq \frac{f_{\text{RT}}(r)}{2}n \geq \frac{n}{3}$. Moreover we have
\begin{equation*}
  e(G[A]) \geq \frac{1}{2}(|A| - 1)\left(\frac{f_{\text{RT}}(r)}{2} + \frac{\omega}{2}\right)n
  = \frac{n}{2|A|}\left(f_{\text{RT}}(r) + \omega\right)\binom{|A|}{2} \geq
   \left(f_{\text{RT}}(r) + \omega \right) \binom{|A|}{2},
\end{equation*}
so $G[A]$ contains a copy of $K_r$ by Theorem~\ref{rt-thm}(a). This contradicts our assumption that $G$ was $K_r$-free and so completes the proof.
\end{proof}

Note that $\frac{f_{\text{RT}}(5)}{2} < \frac{f_{\text{RT}}(7)}{2} = \frac{1}{3}$. In Section~\ref{sec:examples} we give a construction of a $K_5$-free graph on $n$ vertices with minimum degree at least $n/3$ and sublinear independence number which contains a space barrier.
This demonstrates that the minimum degree condition in Corollary~\ref{krfree} is best-possible up to the $\omega n$ error term for $r=7$, and cannot be lowered 
by requiring $G$ to be $K_5$-free as opposed to $K_7$-free. Furthermore, the graph $G_4(n)$ presented after Theorem~\ref{rt-thm} shows that the minimum degree condition in Corollary~\ref{krfree} is best-possible up to the $\omega n$ error term for $r \geq 8$ also.

In a $K_4$-free graph, we can only construct space barriers when $\delta(G) < n/6$,
so it may be true that, in a $K_4$-free graph, the conditions $\delta(G) \geq (1/6 + o(1))n$ and $\alpha(G) = o(n)$ are sufficient
to guarantee a perfect triangle-tiling when $n$ is divisible by $3$;
we discuss this further in Section~\ref{sec:examples}. 
Also in Section~\ref{sec:examples}, we consider the problem of determining the minimum degree condition 
which guarantees a perfect $K_k$-tiling in a graph with sublinear independence number when $k \ge 4$.

\section{Notation and preliminary results}

In this section we introduce various results which we will use in the proof of Theorem~\ref{main}, beginning with helpful notation. We write $x = y \pm z$ to mean $y-z \leq x \leq y+z$, and $[n]$ to denote the set of integers from $1$ to $n$. We omit floors and ceilings throughout this paper wherever they do not affect the argument. We write $x \ll y$ to mean that for every $y > 0$ there exists $x_0 > 0$ such that the subsequent statements hold for $x$ and $y$ whenever $0 < x \leq x_0$. Similar statements with more variables are defined similarly.

\subsection{Regularity}

In a graph $G$, for each pair of disjoint non-empty sets $A, B \subseteq V(G)$ we write $G[A, B]$ for the bipartite subgraph of $G$ with vertex classes $A$ and $B$ and whose edges are all edges of $G$ with one endvertex in $A$ and the other in $B$, and denote the \emph{density} of $G[A, B]$ by $d_G(A, B) := \tfrac{e(G[A, B])}{|A||B|}$. We say that $G[A, B]$ is $(d,\eps)$-\emph{regular} if $d_G(X, Y) = d \pm \eps$ for every $X \subseteq A$ and $Y \subseteq B$ with $|X| \geq \eps |A|$ and $|Y| \geq \eps |B|$, and we write that $G[A, B]$ is $(\geq\!\!d, \eps)$-\emph{regular} to mean that $G[A, B]$ is $(d', \eps)$-regular for some $d' \geq d$. Also, we say that $G[A, B]$ is $(d, \eps)$-\emph{super-regular} if $G[A, B]$ is $(\geq\!\!d, \eps)$-regular, every vertex of $A$ has at least $(d-\eps)|B|$ neighbours in $B$ and every vertex of $B$ has at least $(d-\eps)|A|$ neighbours in $A$. The following well-known results are elementary consequences of the definitions.

\begin{lemma}[Slicing Lemma]\label{slicing}
For every $d, \eps, \beta > 0$, if $G[A, B]$ is $(d, \eps)$-regular, and $X \subseteq A$ and $Y \subseteq B$ have sizes $|X| \geq \beta |A|$ and $|Y| \geq \beta |B|$, then $G[X, Y]$ is $(d, \eps/\beta)$-regular.
\end{lemma}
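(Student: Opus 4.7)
The plan is to verify both conditions in the definition of $(d, \eps/\beta)$-regularity for $G[X, Y]$ directly from the hypothesis on $G[A, B]$. The strategy rests on a single simple observation: any subset $X' \subseteq X$ which is at least an $(\eps/\beta)$-fraction of $X$ is automatically at least an $\eps$-fraction of $A$, since $|X'| \geq (\eps/\beta)|X| \geq (\eps/\beta) \cdot \beta |A| = \eps |A|$, and symmetrically for $Y' \subseteq Y$. In other words, the size factor $\beta$ lost in passing from $A$ to $X$ is exactly compensated by the worsening of the regularity parameter from $\eps$ to $\eps/\beta$.

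First I would note that $X \subseteq A$ with $|X| \geq \beta|A|$ forces $\beta \leq 1$, so $\eps \leq \eps/\beta$. Then, given any $X' \subseteq X$ and $Y' \subseteq Y$ with $|X'| \geq (\eps/\beta)|X|$ and $|Y'| \geq (\eps/\beta)|Y|$, the observation above gives $|X'| \geq \eps|A|$ and $|Y'| \geq \eps|B|$. Applying $(d,\eps)$-regularity of $G[A,B]$ to the pair $(X', Y')$ then yields $d_G(X', Y') = d \pm \eps$, which, since $\eps \leq \eps/\beta$, is already $d_G(X', Y') = d \pm \eps/\beta$, as required.

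There is really no obstacle here: the proof is a short bookkeeping exercise exploiting the two separate roles of $\eps$ in the regularity definition (threshold on subset size and threshold on density deviation) and the fact that both roles weaken by the same factor $1/\beta$ when we restrict from $(A,B)$ to $(X,Y)$.
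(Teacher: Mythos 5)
Your proof is correct and is exactly the standard argument; the paper omits the proof entirely, stating only that the lemma is an elementary consequence of the definitions, and your bookkeeping (subsets of density $\eps/\beta$ in $X$ are subsets of density $\eps$ in $A$, together with $\beta\le 1$ giving $\eps\le\eps/\beta$) is precisely that elementary consequence.
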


\begin{lemma}\label{makesuperreg}
For every $d, \eps > 0$ with $\eps < \frac{1}{2}$, if $G[A, B]$ is $(\geq\!\!d, \eps)$-regular, then there are sets $X \subseteq A$ and $Y \subseteq B$ with sizes $|X| \geq (1-\eps) |A|$,  and $|Y| \geq (1-\eps) |B|$ such that $G[X, Y]$ is $(d, 2\eps)$-super-regular.
\end{lemma}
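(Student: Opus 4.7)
The plan is to remove the few ``exceptional'' vertices on each side whose degree into the other part is too small, and then invoke the Slicing Lemma on what remains. This is a standard derivation of super-regularity from regularity.

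First, I would define $A_0 \subseteq A$ to be the set of vertices in $A$ with fewer than $(d-\eps)|B|$ neighbours in $B$, and analogously $B_0 \subseteq B$. The key observation is that $|A_0| < \eps|A|$: otherwise the pair $(A_0, B)$ would satisfy the size hypothesis of the regularity definition, yet double-counting edges shows $d_G(A_0,B) < d - \eps$, contradicting the $(\geq d, \eps)$-regularity of $G[A,B]$ (which forces $d_G(A_0, B) \geq d - \eps$). Symmetrically $|B_0| < \eps|B|$.

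Next, I would set $X := A \sm A_0$ and $Y := B \sm B_0$, so immediately $|X| \geq (1-\eps)|A|$ and $|Y| \geq (1-\eps)|B|$. For the minimum degree condition, any $v \in X$ has at least $(d-\eps)|B|$ neighbours in $B$ and loses at most $|B_0| \leq \eps|B|$ of them upon restricting to $Y$, so it retains at least $(d-2\eps)|B| \geq (d-2\eps)|Y|$ neighbours in $Y$; the symmetric bound holds for $v \in Y$.

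Finally, I would apply Lemma~\ref{slicing} with $\beta = 1 - \eps$ to conclude that $G[X,Y]$ is $(\geq d, \eps/(1-\eps))$-regular, and the hypothesis $\eps < 1/2$ gives $\eps/(1-\eps) < 2\eps$, so $G[X,Y]$ is $(\geq d, 2\eps)$-regular. Combined with the degree bounds from the previous step, $G[X,Y]$ is $(d, 2\eps)$-super-regular, as required. There is no real obstacle here; the only subtlety is keeping the two error contributions (the degree loss of $\eps$ and the regularity loss of $\eps/(1-\eps) - \eps$) within the allotted $2\eps$ slack, which is exactly what $\eps < 1/2$ provides.
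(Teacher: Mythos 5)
Your proof is correct and is precisely the standard argument the paper omits (the lemma is stated there without proof as an ``elementary consequence of the definitions''): discard the at most $\eps|A|$ (resp.\ $\eps|B|$) low-degree vertices, note each surviving vertex loses at most an $\eps$-fraction of its neighbours, and apply Lemma~\ref{slicing} with $\beta = 1-\eps$, using $\eps < 1/2$ to get $\eps/(1-\eps) < 2\eps$. All steps, including the monotonicity of the regularity parameter and the double-counting contradiction bounding $|A_0|$, check out against the paper's definitions.
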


We make use of Chernoff bounds on the concentration of binomial and hypergeometric distributions in the following form.

\begin{theorem} [{\cite[Corollary 2.3 and Theorem 2.10]{JLR}}]\label{chernoff}
Suppose $X$ has binomial or hypergeometric distribution and $0<a<3/2$. Then
$\mathbb{P}(|X - \mathbb{E}X| \ge a\mathbb{E}X) \le 2
e^{-\frac{a^2}{3}\mathbb{E}X}$.
\end{theorem}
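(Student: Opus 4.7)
The plan is to prove both tail bounds by the classical Chernoff exponential-moment method, handling the binomial case directly and then reducing the hypergeometric case to it via a convex-domination argument.

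First, I would handle the binomial case. Suppose $X = \sum_{i=1}^n X_i$ with $X_i$ independent Bernoulli($p$), so $\mu := \mathbb{E}X = np$. For the upper tail, fix $t > 0$ and apply Markov's inequality to $e^{tX}$:
\[
\mathbb{P}(X \geq (1+a)\mu) \leq e^{-t(1+a)\mu}\,\mathbb{E}[e^{tX}] = e^{-t(1+a)\mu}(1 - p + pe^t)^n.
\]
Using the inequality $1 + x \leq e^x$ gives $(1 - p + pe^t)^n \leq \exp(\mu(e^t - 1))$. Optimizing over $t > 0$ at $t = \ln(1+a)$ yields $\mathbb{P}(X \geq (1+a)\mu) \leq \exp(-\mu \varphi(a))$, where $\varphi(a) := (1+a)\ln(1+a) - a$. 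An elementary calculus check (expand to second order and control the remainder) then shows $\varphi(a) \geq a^2/3$ for $0 < a < 3/2$. A symmetric argument with $t < 0$ handles the lower tail, and a union bound over the two tails produces the factor of $2$ in the conclusion.

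Next, I would reduce the hypergeometric case to the binomial case. The key ingredient is Hoeffding's convex-domination inequality: if $X$ is a sum of draws \emph{without} replacement from a $\{0,1\}$-valued population and $Y$ is the analogous sum \emph{with} replacement (so $Y$ is binomial with $\mathbb{E}Y = \mathbb{E}X$), then $\mathbb{E}[f(X)] \leq \mathbb{E}[f(Y)]$ for every convex function $f$. Applying this with $f(x) = e^{tx}$ shows that the moment generating function of $X$ is pointwise dominated by that of a binomial variable with the same expectation, after which the Markov-plus-optimization argument above goes through unchanged and yields the same tail bound with the same $a^2/3$ exponent.

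The main obstacle is establishing Hoeffding's convex-domination inequality, which is not immediate. The standard approach is to realize $(X_1, \ldots, X_n)$ (the without-replacement draws) as a conditional expectation of an exchangeable family indexed by permutations and then invoke Jensen's inequality to transfer convex bounds from the with-replacement model; alternatively one can give a direct swapping argument on adjacent coordinates. Once this reduction is in hand, the rest of the proof is the same elementary MGF calculation already carried out in the binomial case, and the two bounds agree on the range $0 < a < 3/2$ claimed in the statement.
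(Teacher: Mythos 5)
The paper does not prove this statement; it imports it verbatim from the cited reference [JLR, Corollary 2.3 and Theorem 2.10]. Your argument --- the exponential-moment method for the binomial case, the elementary verification that $(1+a)\ln(1+a)-a \ge a^2/3$ on $0<a<3/2$ (which is precisely where the restriction $a<3/2$ enters), and Hoeffding's convex-domination reduction of sampling without replacement to the binomial case --- is correct and is exactly the standard proof underlying that reference.
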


The following lemma is similar to lemmas of Csaba and Mydlarz~\cite[Lemma 14]{CM}
and Martin and Skokan~\cite[Lemma 10]{MS}. It states that if we randomly select a collection of disjoint subsets from each of the vertex classes of a super-regular pair, every pair of sets from different classes is super-regular with high probability. 

\begin{lemma}[Random Slicing Lemma]\label{random-slicing}
  Suppose that $1/n \ll \beta, \eps \ll d$.
  Let $G[A,B]$ be $(d, \eps)$-super-regular (respectively $(d, \eps)$-regular)
  where $|A|,|B| \le n$ and let  
  $x_1, \dotsc, x_s$ and $y_1, \dotsc, y_t$ be positive integers each of size at least $\beta n$ such that
  $\sum_{i \in [s]} {x_i} \le |A|$ and $\sum_{j \in [t]} {y_j} \le |B|$.
  If $\{X_1, \dotsc, X_s\}$ is a collection of disjoint subsets of $A$ and 
  $\{Y_1, \dotsc, Y_t\}$ is a collection of disjoint subsets of $B$ 
  such that $|X_i| = x_i$ and $|Y_j| = y_j$ for all $i \in [s]$ and $j \in [t]$ 
  selected uniformly at random from all such collections, 
  then, with probability at least $1 - e^{-\Omega(n)}$, 
  $G[X_i, Y_j]$ is $(d, \eps')$-super-regular (respectively $(d, \eps')$-regular) for all $i \in [s]$ and $j \in [t]$,
  where $\eps' := (33 \eps)^{1/5}$.
\end{lemma}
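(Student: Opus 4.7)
The plan is to fix a single pair of indices $(i,j)$, show the desired property holds for that pair with failure probability at most $e^{-\Omega(n)}$, and then take a union bound over the at most $st \leq 1/\beta^2$ pairs. Since each $x_i, y_j \geq \beta n$, the union bound loses at most a polynomial factor and is easily absorbed into the $e^{-\Omega(n)}$ failure probability. Throughout I would invoke Theorem~\ref{chernoff} on the hypergeometric distributions arising from the uniform random selection.

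For the super-regular variant, the extra minimum-degree content is immediate by Chernoff. Fix $v \in A$; by super-regularity $|N(v) \cap B| \geq (d-\eps)|B|$, and since $Y_j$ is a uniformly random subset of $B$ of size $y_j \geq \beta n$, the random variable $|N(v) \cap Y_j|$ is hypergeometric with mean at least $(d-\eps)y_j$. Theorem~\ref{chernoff} therefore yields $|N(v) \cap Y_j| \geq (d - \eps')y_j$ with probability $1 - e^{-\Omega(\beta n)}$. A union bound over $(v,j) \in A \times [t]$ together with the symmetric statement on the $B$-side involves only $O(n^2)$ events, well within budget.

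The regularity condition is the main content. A naive union bound over all pairs $X' \subseteq X_i$, $Y' \subseteq Y_j$ of the relevant sizes is hopeless, since there are exponentially many such pairs. Following the approach of the cited lemmas~\cite[Lemma 14]{CM} and~\cite[Lemma 10]{MS}, I would instead use a polynomially-equivalent quasirandomness characterisation via $C_4$-counts: $G[A,B]$ is $(d,\eps)$-regular if and only if the number of labelled four-cycles $vuv'u'$ with $v,v' \in A$, $u,u' \in B$ equals $(d^4 \pm \operatorname{poly}(\eps))|A|^2|B|^2$. This count is a sum of $O(n^4)$ bounded indicators, and the corresponding quantity for $X_i \times Y_j$ concentrates around its expectation under the random slicing, so the sliced pair inherits a slightly weaker $C_4$-count condition. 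Inverting the characterisation then gives $(d, \eps')$-regularity of $G[X_i, Y_j]$, with the exponent $1/5$ in $\eps' = (33\eps)^{1/5}$ encoding the polynomial loss in the round-trip between the two characterisations.

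The main obstacle is controlling the concentration of the $C_4$-count under the random slicing: the summands share vertices and cannot be treated as independent indicators. The cleanest route is probably an Azuma vertex-exposure martingale, since adding or removing a single vertex from any $X_i$ or $Y_j$ changes the count by at most $O(n^3)$, which is negligible against a mean of order $n^4$; alternatively, one can bucket the $4$-tuples by their intersection pattern with $X_i$ and $Y_j$ and apply Theorem~\ref{chernoff} to each bucket separately. Once this concentration is established, reverse-implying $(d, \eps')$-regularity from the $C_4$-count is a standard quasirandomness computation.
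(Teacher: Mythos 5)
Your proposal is essentially correct but takes a genuinely different route from the paper. The paper does not use the $C_4$-count characterisation of quasirandomness; instead it uses the Kohayakawa--R\"odl \emph{pair condition}: regularity of $G[A,B]$ is polynomially equivalent (Theorems~\ref{kr-sufficient} and~\ref{kr-necessary}) to the auxiliary graph $D_{AB}(\eps)$ on $A$ --- whose edges are pairs $x,x'$ with large degrees and small codegree --- having almost all $\binom{|A|}{2}$ edges. Every quantity the paper then needs to concentrate (degree of a vertex into $Y_j$, codegree of a pair into $Y_j$, and the density, via a bucketing of vertices by degree in Lemma~\ref{lem:chernoff-slice-bigraph}(e)) is a hypergeometric random variable or a short sum of such, so plain Chernoff (Theorem~\ref{chernoff}) suffices; the edge count $e(D[X_i])$ is handled by applying the same slicing argument to the auxiliary graph $D$ itself (Lemma~\ref{lem:chernoff-slice-graph}), and one checks $D[X_i] \subseteq D_{X_iY_j}(2\eps)$. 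Your $C_4$-count route replaces this with a single quartic statistic whose concentration needs a martingale (Azuma/McDiarmid for sampling without replacement), which does work --- the Lipschitz constant $O(n^3)$ against a mean of order $n^4$ gives $e^{-\Omega(n)}$ --- but imports a tool the paper never needs. Two caveats: (i) the inverse $C_4$ characterisation requires controlling the edge density of $G[X_i,Y_j]$ as well as the $C_4$-count, so you still need the density concentration step you did not mention explicitly; and (ii) the specific value $\eps' = (33\eps)^{1/5}$ in the statement is inherited from the exponent $1/5$ in Theorem~\ref{kr-sufficient}, whereas the $C_4$ round-trip gives a different polynomial dependence, so as written your argument proves the lemma only with some $\eps' = \mathrm{poly}(\eps)$ rather than the stated constant (harmless for the applications, but worth flagging). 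Your treatment of the super-regularity clause and the union bound matches the paper.
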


For completeness we present a proof of Lemma~\ref{random-slicing} in the Appendix.
To make use of regularity properties, we apply the degree form of Szemer\'edi's Regularity Lemma (see~\cite[Theorem~1.10]{KS96}).

\begin{theorem}[Degree form of Szemer\'edi's Regularity Lemma]\label{reglem}
  For every $\eps > 0$, real number $d \in [0, 1]$ and integers $t$ and $q$ there exists integers $n_0$ and $T$ such that the following statement holds. Let $G$ be a graph on $n \geq n_0$ vertices, and let $U_1, \dots, U_q$ be a partition of $V(G)$ into $q$ parts. Then there is a partition of $V(G)$ into an exceptional set $V_0$ and $k$ clusters $V_1, \dotsc, V_k$, and a spanning subgraph $G' \subseteq G$ such that
  \begin{enumerate}[label=(\alph*), noitemsep]
    \item $t \leq k \leq T$,
    \item $|V_1| = |V_2| = \ldots = |V_k|$ and $|V_0| \leq \eps n$, 
    \item for every $i \in [k]$ there exists $j \in [q]$ such that $V_i \subseteq U_j$, 
    \item $d_{G'}(v) \ge d_G(v) - (\eps + d)n$ for all $v \in V(G)$,
    \item $e(G'[V_i]) = 0$ for all $i \in [k]$, and
    \item for each distinct $i, j \in [k]$ either $G'[V_i, V_j]$ is $(\geq\!\!d, \eps)$-regular or $G'[V_i, V_j]$ is empty.
  \end{enumerate}
\end{theorem}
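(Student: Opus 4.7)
The plan is to derive this degree form by applying the standard (equitable) Szemer\'edi Regularity Lemma and then cleaning up the resulting graph. I would introduce auxiliary parameters $\eps_1$ and $t_1$ with $1/t_1 \ll \eps_1 \ll \eps, d$, and invoke the initial-partition-respecting variant of the standard lemma: starting from $U_1, \dotsc, U_q$, this produces an equitable partition $V_0, V_1, \dotsc, V_k$ of $V(G)$ with $|V_0| \le \eps_1 n$, $t_1 \le k \le T_1$, each $V_i$ (for $i \ge 1$) contained in some $U_{j(i)}$, and all but at most $\eps_1 \binom{k}{2}$ pairs $(V_i, V_j)$ being $\eps_1$-regular in $G$.

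I would then define $G'$ by removing from $G$ every edge that (i) has both endvertices in the same cluster $V_i$ with $i \ge 1$, (ii) lies in a cluster-pair that is not $\eps_1$-regular, or (iii) lies in a cluster-pair of density less than $d$. Crucially, edges incident to $V_0$ are retained, which matters for condition (d) below. Properties (a), (b), (c), and (e) are then immediate from the choice of partition, and (f) holds because the surviving cluster-pairs are exactly those that are $\eps_1$-regular (so in particular $\eps$-regular after mild parameter adjustment) and of density at least $d$, while every other cluster-pair has been emptied.

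The main obstacle is verifying the per-vertex degree loss (d). For $v \in V_0$ no edges are removed, so the condition is trivial. For $v \in V_i$ with $i \ge 1$, the loss in category (i) is at most $|V_i| \le n/t_1$, and the loss in category (iii) is at most $d \cdot n$, since every such edge goes to a cluster in a sparse pair incident to $V_i$ and the clusters have total size at most $n$. The delicate contribution is category (ii): although globally there are at most $\eps_1 n^2$ edges in irregular pairs, a single vertex could carry disproportionately many. I would handle this by the standard trick of preemptively moving into $V_0$ every vertex that lies in more than $\sqrt{\eps_1}\, k$ irregular pairs; by double counting there are at most $2\sqrt{\eps_1}\, n$ such vertices, so $|V_0|$ grows only to at most $(\eps_1 + 2\sqrt{\eps_1})n$. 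Every remaining vertex then loses at most $\sqrt{\eps_1}\, k \cdot (n/k) = \sqrt{\eps_1}\, n$ edges to irregular pairs, and choosing $\eps_1$ small enough that $1/t_1 + 2\sqrt{\eps_1} + \eps_1 < \eps$ makes the total loss at most $(\eps + d)n$ for every vertex, completing the deduction.
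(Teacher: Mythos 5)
Your overall strategy---derive the degree form from the standard equitable regularity lemma by deleting edges inside clusters, in irregular pairs, and in sparse pairs, while relocating problematic vertices to $V_0$---is the standard one. (The paper itself does not give this derivation: it simply cites the degree form from Koml\'os--Simonovits and only explains why property (c) can be added, namely by starting the iterative refinement from the given partition $U_1,\dots,U_q$. So you are proving more than the paper does, which is fine in principle.)

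However, your verification of (d) has a genuine gap at category (iii). You bound the edges a vertex $v\in V_i$ loses to sparse pairs by $d\cdot n$ on the grounds that each such pair has density less than $d$ and the clusters have total size at most $n$. But density is an average: $d_G(V_i,V_j)<d$ bounds the \emph{average} degree of vertices of $V_i$ into $V_j$, not the degree of an individual vertex. A single vertex $v\in V_i$ can be adjacent to all of $V_j$ even though $d_G(V_i,V_j)<d$ (one such vertex barely perturbs the density and does not destroy $\eps_1$-regularity), and if this happens for many $j$ then $v$ loses close to $n$ edges in category (iii), so $d_{G'}(v)\ge d_G(v)-(\eps+d)n$ fails for this $v$. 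The standard repair uses the regularity of the sparse pairs: if $(V_i,V_j)$ is $\eps_1$-regular of density less than $d$, then all but at most $\eps_1|V_i|$ vertices of $V_i$ have fewer than $(d+\eps_1)|V_j|$ neighbours in $V_j$. You therefore also move to $V_0$ every vertex that is atypical in this sense for more than $\sqrt{\eps_1}k$ of its sparse pairs; double counting shows this relocates at most $\sqrt{\eps_1}n$ further vertices (indeed at most $\sqrt{\eps_1}|V_i|$ from each cluster), and every surviving vertex then loses at most $(d+\eps_1)n+\sqrt{\eps_1}n$ edges to sparse pairs. You should also say a word about re-equalising the cluster sizes after these relocations (discard a few extra vertices from each cluster into $V_0$, invoking the slicing lemma to retain regularity with a slightly worse parameter), since your irregular-pair relocation is cluster-determined but the new sparse-pair relocation is genuinely per-vertex and would otherwise break property (b).
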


Theorem~\ref{reglem} as stated above is stronger than the form given in~\cite{KS96} in that it allows us to specify an initial partition of $V(G)$ and to insist that the clusters $V_1, V_2, \dots, V_k$ are each a subset of some part of this partition (property~(c) above). However, this statement follows from the same proof, which proceeds iteratively by alternately refining a partition of $V(G)$ and deleting some vertices of $V(G)$ (which are then placed in the exceptional set $V_0$). So to prove Theorem~\ref{reglem} we take our specified partition as the initial partition of this process.

\subsection{Robustly-matchable sets}

The following application of the regularity lemma is critical to the entire proof. 
Given a graph $G$, a small $A \subseteq V(G)$ and a small matching $B \subseteq E(G)$, we form an auxiliary bipartite graph 
$F$ with vertex set $A\cup B$ in which there is an edge between $a\in A$ and $bc\in B$ if and only if $abc$ is a triangle in $G$. So matchings in $F$ correspond to triangle-tilings in $G$.
In this setting, Lemma~\ref{absorber} allows us to choose subsets $X \subseteq A$ and $Y \subseteq B$ such that if we can find a triangle-tiling in $G$
that covers every vertex of $G$ except for the vertices incident to edges in $Y$ and
exactly $|Y|$ of the vertices in $X$, then we obtain a perfect triangle-tiling in $G$.

\begin{lemma} \label{absorber}
Suppose that $1/n \ll \phi \ll \eps \ll d$. Let $F$ be a bipartite graph with vertex classes $A$ and $B$ such that $n/10 \leq |A|, |B| \leq n$ and $d_F(A, B) \geq d$. Then there exist subsets $X \subseteq A$ and $Y \subseteq B$ of sizes $|X| = \phi n$ and $|Y| = (1-\eps) \phi n$ such that $F[X', Y]$ contains a perfect matching for every subset $X' \subseteq X$ with $|X'| = |Y|$.
\end{lemma}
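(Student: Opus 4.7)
The strategy is to find a dense super-regular pair inside $F$, extract random subsets of the prescribed sizes that remain super-regular, and then deduce the robust matchability property from Hall's theorem via a short case analysis.

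To produce the super-regular pair, I fix an auxiliary hierarchy $1/n \ll \phi \ll \eps_0, d_0 \ll \eps \ll d$ and apply the degree form of Szemer\'edi's Regularity Lemma (Theorem~\ref{reglem}) to $F$ with initial partition $\{A,B\}$, parameters $\eps_0, d_0$, and $t$ a small constant. This yields equipartition clusters $V_1,\dots,V_k$ (each contained in $A$ or $B$) and a spanning subgraph $F' \subseteq F$. Since $d_F(A,B)\ge d \gg \eps_0+d_0$ and $|A|,|B| \ge n/10$, a standard weighted averaging argument over pairs of clusters in the reduced graph produces $V_i\subseteq A$ and $V_j\subseteq B$ such that $F'[V_i,V_j]$ is $(\ge\!d/2,\eps_0)$-regular. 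Applying Lemma~\ref{makesuperreg} yields a $(d/2, 2\eps_0)$-super-regular sub-pair $F[A^*,B^*]$ with $|A^*|,|B^*|\gg \phi n$, and Lemma~\ref{random-slicing} then lets me choose uniformly at random subsets $X\subseteq A^*$ of size $\phi n$ and $Y\subseteq B^*$ of size $(1-\eps)\phi n$ so that, with probability at least $1 - e^{-\Omega(n)}$, the pair $F[X,Y]$ is $(d/2, \eps_1)$-super-regular with $\eps_1 := (66\eps_0)^{1/5} \ll \eps$.

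It then remains to verify Hall's condition in $F[X',Y]$ for an arbitrary $X'\subseteq X$ with $|X'|=|Y|$: given $S\subseteq Y$ I must show $|N_F(S)\cap X'|\ge|S|$. If $|S|\le\eps_1|Y|$, any fixed $y\in S$ has at least $(d/2-\eps_1)|X|$ neighbours in $X$, so $|N(S)\cap X'|\ge(d/2-\eps_1-\eps)\phi n$, which exceeds $|S|$ since $\eps,\eps_1\ll d$. If $\eps_1|Y|<|S|\le(1-d/2+\eps_1)|Y|$, then $\eps_1$-regularity forces $|X\setminus N(S)|<\eps_1|X|$ (otherwise the pair $(X\setminus N(S),S)$ would have density zero while regularity demands density at least $d/2-\eps_1$), so $|N(S)\cap X'|\ge|X'|-\eps_1|X|=(1-\eps-\eps_1)\phi n \ge |S|$ by a short check using $\eps,\eps_1\ll d$. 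Finally, if $|S|>(1-d/2+\eps_1)|Y|$, then $|Y\setminus S|<(d/2-\eps_1)|Y|$, so no $v\in X$ can have all of its $\ge(d/2-\eps_1)|Y|$ neighbours in $Y$ outside $S$; hence $N(S)=X$ and $|N(S)\cap X'|=|X'|=|Y|\ge|S|$.

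The main obstacle is balancing the parameter hierarchy: Lemma~\ref{random-slicing} degrades the super-regularity constant from $2\eps_0$ to $\eps_1$ of order $\eps_0^{1/5}$, which must remain much smaller than both $d$ and $\eps$ for the Hall's check to close, while the cluster size $n/T(\eps_0,d_0)$ must still accommodate subsets of size $\phi n$. Choosing the constants in the order $d,\eps,\eps_0,d_0,\phi$ resolves all these constraints simultaneously.
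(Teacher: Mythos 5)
Your proposal is correct and follows essentially the same route as the paper's proof: apply the degree form of the Regularity Lemma with initial partition $\{A,B\}$ to locate a dense regular pair between a cluster of $A$ and a cluster of $B$, upgrade it to super-regularity via Lemma~\ref{makesuperreg}, extract $X$ and $Y$ of the prescribed sizes by Lemma~\ref{random-slicing}, and conclude with Hall's condition. The only (harmless) differences are that you verify Hall's condition from the $Y$ side rather than from subsets of $X'$, and you obtain a pair of density $d/2$ by averaging where the paper settles for density $d/200$ from any nonempty regular pair.
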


\begin{proof}
Let $n_0$ and $T$ be the integers returned by Theorem~\ref{reglem} given inputs $\eps$, $d' := d/200$ and $t=q=2$. We may assume that $\phi \leq 1/4T$.
We use Theorem~\ref{reglem} with initial partition $U_1=A$ and $U_2 = B$ to obtain a spanning subgraph $F' \subseteq F$ and a partition of $V(F)$ into sets $V_0, V_1, \dots, V_k$ which satisfy properties~(a)--(f) of Theorem~\ref{reglem}. In particular, by Theorem~\ref{reglem}(d) at most $(\eps+d/200)n^2$ edges of $F$ are not edges of $F'$. Also, by Theorem~\ref{reglem}(e) there are no edges in $F'[V_i]$ for any $i \in [k]$, and since $|V_0| \leq \eps n$ by Theorem~\ref{reglem}(b), at most $\eps n^2$ edges of $F$ contain a vertex of $V_0$. Since
$$e(F) = d_F(A, B)|A||B| \geq d\left(\frac{n}{10}\right)^2 > \left(\eps+\frac{d}{200}\right)n^2 + \eps n^2,$$
there must exist distinct $i, j \in [k]$ such that $F'[V_i, V_j]$ is non-empty, and since $F$ is bipartite, by Theorem~\ref{reglem}(c) we may assume without loss of generality that $V_i \subseteq A$ and $V_j \subseteq B$.
Observe that $F'[V_i, V_j]$ is $(\geq\!\!d', \eps)$-regular by Theorem~\ref{reglem}(f). Write $m$ for the common size of $V_i$ and $V_j$, so 
$m = |V(F) \sm V_0|/k \geq n/2T \geq 2\phi n$ by Theorem~\ref{reglem}(a)~and~(b). 
By Lemma~\ref{makesuperreg} we may delete at most $\eps m$ vertices from each of $V_i'$ and $V_j'$ to obtain subsets $V'_i \subseteq V_i$ and $V_j' \subseteq V_j$ such that $F[V_i', V_j']$ is $(d', 2\eps)$-super-regular. Having done so, choose $X \subseteq V_i'$ and $Y \subseteq V_j'$ uniformly at random with sizes $\phi n$ and $(1-\eps)\phi n$ respectively (this is possible since $|V_i'|, |V_j'| \geq (1-\eps) m \geq \phi n$). Then Lemma~\ref{random-slicing} tells us that $F'[X, Y]$ is $(d', \eps')$-super-regular with high probability, where $\eps' := (66\eps)^{1/5}$, so we may fix sets $X$ and $Y$ with this property. It then follows that every vertex of $X$ has at least $(d'-\eps') |Y| \geq \eps' |X|$ neighbours in $Y$, whilst every set of at least $\eps' |X|$ vertices of $X$ has at least $(1-\eps') |Y| \geq (1-2\eps')|X|$ neighbours in $Y$ (where we say that a vertex $y$ is a neighbour of a set $X'$ if $y$ is a neighbour of some element of $X'$). Finally, since every vertex of $Y$ has at least $(d' - \eps') |X| > 2 \eps' |X|$ neighbours in $X$, every set of at least $(1-2\eps')|X|$ vertices of $X$ has every vertex of $Y$ as a neighbour. So Hall's criterion is satisfied for every $X' \subseteq X$ of size $|X'| \leq |Y|$, so for every $X' \subseteq X$ with $|X'| = |Y|$ there is a perfect matching in $F'[X', Y]$.
\end{proof}

\subsection{Spanning bounded degree trees}

Our proof requires us to find a spanning tree of bounded maximum degree in the reduced graph $R$ of $G$. For this, we use the following theorem of Win~\cite{Win}.

\begin{theorem}\label{spantree}
If $k \ge 2$ and $R$ is a connected graph such that 
\begin{equation*}
  \sum_{v \in S} d(v) \ge |R| - 1 
  \text{ for every independent set $S$ of size $k$},
\end{equation*}
then $R$ contains a spanning tree $T$ such that $\Delta(T) \le k$.
In particular, if $R$ is a connected graph with $\delta(R) \ge (|R|-1)/k$, then $R$ contains
a spanning tree $T$ with maximum degree at most $k$.
\end{theorem}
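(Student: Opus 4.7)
The plan is to argue by contradiction: suppose $R$ satisfies the hypothesis but has no spanning tree of maximum degree at most $k$. Among all spanning trees of $R$, choose $T$ minimizing the convex potential $\Phi(T) := \sum_{v \in V(R)} \binom{d_T(v)}{2}$. By assumption $\Delta(T) \ge k+1$, so pick a vertex $v^\star$ with $d_T(v^\star) \ge k+1$. Let $d := d_T(v^\star)$, and let $T_1, \ldots, T_d$ be the components of $T - v^\star$, with $v_i$ the unique neighbour of $v^\star$ in $T_i$.

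The minimality of $\Phi$ yields the following \emph{swap inequality}: for any non-tree edge $xy \in E(R) \setminus E(T)$ and any tree edge $zw$ on the fundamental cycle of $T + xy$,
\[
d_T(z) + d_T(w) \le d_T(x) + d_T(y) + 2,
\]
since otherwise $T' := T + xy - zw$ is a spanning tree with $\Phi(T') < \Phi(T)$. Applied with $zw := v^\star v_i$, which lies on the fundamental cycle precisely when $x \in V(T_i)$ and $y \notin V(T_i)$, this shows that every non-tree cross-edge $xy$ between distinct subtrees $T_i$ and $T_j$ satisfies $d_T(x) + d_T(y) \ge d_T(v^\star) + d_T(v_i) - 2 \ge k$.

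Next, for each $i \in [d]$ pick $u_i \in V(T_i)$ with $d_T(u_i) = 1$ as follows: take $u_i$ to be a leaf of $T_i$ other than $v_i$ whenever $|V(T_i)| \ge 2$ (as any tree on at least two vertices has at least two leaves), and set $u_i := v_i$ otherwise. For $k \ge 3$, the set $\{u_1, \ldots, u_d\}$ is then independent in $R$: any edge $u_iu_j$ would be a non-tree cross-edge, forcing $d_T(u_i) + d_T(u_j) = 2 \ge k$ by the swap inequality, a contradiction. Thus we have $d \ge k+1$ pairwise non-adjacent low-degree candidates available.

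The final task is to select $k$ of the $u_i$'s whose total $R$-degree is at most $|R| - 2$, contradicting the hypothesis. The swap inequality confines every $R$-neighbour of $u_i$ lying outside $V(T_i)$ to the set $H := \{y \in V(R) : d_T(y) \ge k-1\}$, whose size is controlled via the handshake identity $\sum_v d_T(v) = 2(|R|-1)$. Combined with the observation that $|V(T_1)|, \ldots, |V(T_d)|$ partition $V(R) \setminus \{v^\star\}$ into $d \ge k+1$ nonempty parts of total size $|R|-1$ (so the $k$ smallest subtrees have total size at most $|R|-2$), careful counting delivers the required degree-sum bound. The main obstacle is executing this final counting step with tight enough constants---roughly, one must argue that the high-degree vertices of $H$ cannot simultaneously crowd into the smallest subtrees without violating the tree-degree budget---and the borderline case $k = 2$ has to be treated separately via a standard rotation/extension argument on a longest path in $R$ (the classical Chv\'atal--Erd\H{o}s condition for a Hamilton path), since the leaf construction above only forces independence when $k \ge 3$.
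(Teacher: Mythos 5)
The paper does not prove this statement at all: Theorem~\ref{spantree} is quoted directly from Win~\cite{Win}, so there is no internal argument to compare against and your attempt must stand on its own. The machinery you set up is sound as far as it goes: the potential $\Phi(T)=\sum_v\binom{d_T(v)}{2}$, the swap inequality $d_T(z)+d_T(w)\le d_T(x)+d_T(y)+2$ (which indeed still holds, in a weaker-than-optimal form, when $\{x,y\}$ and $\{z,w\}$ intersect), and the conclusion that $u_1,\dots,u_d$ form an independent set of size $d\ge k+1$ when $k\ge 3$ are all correct.

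The final step, however, is a genuine gap rather than an omitted routine computation. You need an independent $k$-set with degree sum at most $|R|-2$, but the only control you extract on $N_R(u_i)$ outside $V(T_i)\cup\{v^\star\}$ is containment in $H:=\{y: d_T(y)\ge k-1\}$, and $H$ can have size of order $|R|$: for $k=3$ it contains every non-leaf of $T$, and in general the handshake identity only gives $|H|\le(|R|-2)/(k-2)$. Since each $u_i$ could a priori be adjacent to all of $H$, your counting yields at best $\sum_{i\in I}d_R(u_i)\le\sum_{i\in I}|V(T_i)|+k|H|$, whose second term alone is roughly $\tfrac{k}{k-2}|R|>|R|$; choosing the $k$ smallest subtrees, or averaging over all $k$-subsets of $\{u_1,\dots,u_d\}$, does not repair this. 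What one actually needs is to produce representatives $u_i$ with $N_R(u_i)\subseteq V(T_i)\cup\{v^\star\}$, for then independence is automatic and $\sum_{i=1}^{k}d_R(u_i)\le\sum_{i=1}^{k}|V(T_i)|\le(|R|-1)-(d-k)\le|R|-2$; but establishing that confinement requires a sharper exchange argument (for instance iterated rotations inside each $T_i$, or a different extremal choice of $T$ such as minimising the total excess $\sum_v\max(d_T(v)-k,0)$, which forces every cross-edge between distinct $T_i,T_j$ to have an endpoint of $T$-degree at least $k$), none of which appears in your sketch. The deferral of $k=2$ to the Ore-type Hamilton-path theorem is acceptable as a citation but is likewise not carried out. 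As written, the proposal identifies a plausible framework but does not constitute a proof.
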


\subsection{Fractional weighted matchings via linear programming}

In our proof of Theorem~\ref{main}, we will consider regular pairs of clusters of vertices of $G$ and use the regularity of each pair to find a triangle-tiling covering a given proportion of vertices from each cluster. 
We want to choose these proportions so that collectively these triangle-tilings cover (almost) all of the vertices of $G$. To do this we look for a generalized form of weighted matching in the reduced graph; the proportion of vertices to be covered by a triangle-tiling within a pair of clusters then corresponds to the weight in this matching of the corresponding edge of the reduced graph. 

A \emph{fractional matching} $w$ in a graph $G$ assigns a weight $w_e \geq 0$ to each edge $e \in E(G)$ such that for every vertex $u \in V(G)$ we have $\sum_{e \ni u} w_e \leq 1$. In other words, if we consider each edge $uv$ to place weight $w_{uv}$ at each of $u$ and $v$, then the the combined weight placed at each vertex is at most one. This is a relaxation of an integer matching $M$, in which we insist that for each $e \in E(G)$ we have $w_e = 1$ (meaning that $e \in M$) or $w_e = 0$ (meaning that $e \notin M$). Here we work with a more general notion of an \emph{$(\eta, \xi)$-weighted fractional matching}, in which we consider each edge to place different weights at each end, subject to the restriction that the ratio of these weights is at most $\eta : \xi$. It is most natural to express these matchings in terms of directed graphs, as we can then consider a directed edge $\dir{uv}$ of weight $w_{\dir{uv}}$ to place weight $\eta w_{\dir{uv}}$ on its tail $u$ and weight $\xi w_{\dir{uv}}$ on its head $v$; as before, we insist that the combined weight placed at each vertex is at most one. 

\begin{definition}
Let $\Gamma$ be a directed graph on $n$ vertices and let $\eta$ and $\xi$ be positive real numbers. An \emph{$(\eta, \xi)$-weighted fractional matching} $w$ in $\Gamma$ is an assignment of a weight $w_{\dir{uv}} \geq 0$ to each edge $\dir{uv}$ of $\Gamma$ such that for every vertex $u \in V(\Gamma)$ we have 
\begin{equation} \label{eq:fracmatch}
\sum_{v \in N^+_{\Gamma}(u)} \eta w_{\dir{uv}} + \sum_{v \in N^-_{\Gamma}(u)} \xi w_{\dir{vu}} \leq 1.
\end{equation}
The \emph{total weight} of $w$ is defined to be $W := \sum_{\dir{uv} \in E(\Gamma)} (\eta + \xi)w_{\dir{uv}}$. By~\eqref{eq:fracmatch} we have $W \leq n$; we say that $w$ is \emph{perfect} if $W = n$. Note that in this case we have equality in~\eqref{eq:fracmatch} for every vertex.
\end{definition}

Given an undirected graph $G$, we consider $(\eta, \xi)$-weighted fractional matchings in the directed graph $\Gamma$ formed by replacing every edge $uv$ of $G$ with both a directed edge $\dir{uv}$ from $u$ to $v$ and a directed edge $\dir{vu}$ from $v$ to $u$.
In particular, a $(\frac{1}{2}, \frac{1}{2})$-weighted fractional matching $w$ in $\Gamma$ then corresponds to a fractional matching $w'$ in $G$ (in the standard notion of fractional matching as defined above). Indeed, given $w$, for each edge $e = uv \in E(G)$ we may take $w'_e = w_{\dir{uv}} + w_{\dir{vu}}$. 
In our proof we will instead consider $(\eta, \xi)$-weighted fractional matchings in $\Gamma$ where $\xi$ is close to twice as large as $\eta$. 
The advantage of this is shown by Lemma~\ref{wfracm}, which states that 
the minimum degree condition on $G$ needed to guarantee the existence of a perfect $(\eta, \xi)$-weighted fractional matching in $\Gamma$ is then approximately $n/3$, well below the $n/2$ threshold needed to guarantee the existence of a perfect fractional matching in $G$.

Let $\Gamma$ be a directed graph on $n$ vertices $v_1, \dots, v_n$, and fix $\eta, \xi > 0$. Then we define the \emph{$(\eta, \xi)$-weighted characteristic vector} of an edge $\dir{uv} \in E(\Gamma)$ to be the vector $\chi_{\eta, \xi}(\dir{v_iv_j}) \in \mathbb{R}^n$ whose $i$th coordinate is equal to $\eta$, whose $j$th coordinate is equal to $\xi$, and in which all other coordinates are equal to zero. So an assignment $w$ of non-negative weights to edges of $\Gamma$ is an $(\eta, \xi)$-weighted fractional matching in $\Gamma$ if and only if 
\begin{equation} \label{eq:fracmatch2}
  \sum_{\dir{v_iv_j} \in E(\Gamma)} w_{\dir{v_iv_j}} \chi_{\eta, \xi}(\dir{v_iv_j}) \leq \1,
\end{equation}
where $\1$ is the vector in $\mathbb R^n$ with each coordinate equal to $1$ and the inequality is treated pointwise. As before, $w$ is perfect if and only if we have equality for each coordinate.

To prove the existence of a $(\eta,\xi)$-weighted fractional matching in a directed graph of high minimum indegree, we use the following version of Farkas' Lemma,
for which we need the following definition;
a vertex $\mathbf v \in \mathbb{R}^n$ is a \emph{weighted sum} 
of vectors in $\mathcal{X} = \{\mathbf{x}_1, \dotsc, \mathbf{x}_m\} \subseteq \mathbb{R}^n$ if 
$$
\mathbf v \in \left\{ \sum_{i = 1}^{m} \lambda_i x_i : \lambda_i \ge 0 \text{ for every $i \in [m]$ } \right\},
$$
otherwise $\mathbf v$ is not a weighted sum of the vectors in $\mathcal X$.

\begin{lemma}[Farkas' Lemma]\label{farkas}
  For every $\mathbf v \in \mathbb{R}^n$ and every finite $\mathcal X \subseteq \mathbb{R}^n$,
  if $\mathbf v$ is not a weighted sum of the vectors in $\mathcal X$, then
  there exists $\mathbf y \in \mathbb{R}^n$ such that 
  $\mathbf{y} \cdot \mathbf x \ge 0$ for every $\mathbf x \in \mathcal X$
  and $\mathbf{y} \cdot \mathbf{v} < 0$.
\end{lemma}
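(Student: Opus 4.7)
The plan is to set $C := \{\sum_{i=1}^m \lambda_i \mathbf{x}_i : \lambda_i \geq 0\}$ (the conical hull of $\mathcal{X}$), prove that $C$ is a closed convex cone, and then obtain the required vector $\mathbf{y}$ as the displacement from $\mathbf{v}$ to its nearest point in $C$. Convexity and the cone property $\alpha C \subseteq C$ for $\alpha \geq 0$ are immediate from the definition, and the hypothesis of the lemma reduces to the assertion that $\mathbf{v} \notin C$.

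The only non-trivial preliminary is that $C$ is closed, and this is where the main difficulty lies. I would prove it using the conic version of Carath\'eodory's theorem: if $\mathbf{z} \in C$ is written as $\sum \lambda_i \mathbf{x}_i$ with a minimum number of strictly positive coefficients, then the active $\mathbf{x}_i$'s must be linearly independent (otherwise any linear dependence among them can be scaled to zero out another coefficient while keeping the rest non-negative, contradicting minimality). Since $\mathcal{X}$ is finite, this expresses $C$ as the finite union $\bigcup_S \mathrm{cone}(S)$ over linearly independent subsets $S \subseteq \mathcal{X}$. Each $\mathrm{cone}(S)$ is the image of the closed set $[0,\infty)^{|S|}$ under the linear injection $(\lambda_1,\ldots,\lambda_{|S|}) \mapsto \sum_i \lambda_i \mathbf{s}_i$, which is a proper map, and so $\mathrm{cone}(S)$ is closed. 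A finite union of closed sets is closed, so $C$ is closed.

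With $C$ closed, let $\mathbf{p}$ be the point of $C$ nearest to $\mathbf{v}$, which exists because $C \cap \{\mathbf{u} : \|\mathbf{u} - \mathbf{v}\| \leq \|\mathbf{v}\|\}$ is compact and non-empty (using $\mathbf{0} \in C$), and set $\mathbf{y} := \mathbf{p} - \mathbf{v} \neq \mathbf{0}$. For any $\mathbf{x} \in C$ and $t > 0$ we have $\mathbf{p} + t\mathbf{x} \in C$, so $\|\mathbf{y} + t\mathbf{x}\|^2 \geq \|\mathbf{y}\|^2$; expanding, dividing by $t$, and letting $t \to 0^+$ gives $\mathbf{y} \cdot \mathbf{x} \geq 0$, which yields the required inequality for each $\mathbf{x} \in \mathcal{X} \subseteq C$. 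Similarly, since $t\mathbf{p} \in C$ for every $t \geq 0$, the function $t \mapsto \|t\mathbf{p} - \mathbf{v}\|^2$ is minimised on $[0,\infty)$ at $t = 1$; if $\mathbf{p} \neq \mathbf{0}$ this is an interior critical point of a strictly convex quadratic, forcing $\mathbf{p} \cdot \mathbf{y} = 0$ (and if $\mathbf{p} = \mathbf{0}$ this is trivial). Therefore $\mathbf{y} \cdot \mathbf{v} = \mathbf{y} \cdot (\mathbf{p} - \mathbf{y}) = \mathbf{y} \cdot \mathbf{p} - \|\mathbf{y}\|^2 = -\|\mathbf{y}\|^2 < 0$, completing the proof. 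The crux of the argument is the closedness of $C$, and the finiteness of $\mathcal{X}$ is used there to reduce $C$ to a finite union of closed cones.
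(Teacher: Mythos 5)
Your proof is correct. Note, however, that the paper does not prove this statement at all: it is stated as a classical fact (Farkas' Lemma) and used as a black box in the proof of Lemma~\ref{wfracm}, so there is no proof in the paper to compare against. What you have written is the standard separation argument: closedness of the conical hull via the conic Carath\'eodory theorem (reducing $C$ to a finite union of simplicial cones, each closed as the image of $[0,\infty)^{|S|}$ under an injective linear map), followed by projection of $\mathbf v$ onto the closed convex cone and verification that the displacement vector $\mathbf y = \mathbf p - \mathbf v$ satisfies $\mathbf y \cdot \mathbf x \ge 0$ on $C$ and $\mathbf y \cdot \mathbf v = -\|\mathbf y\|^2 < 0$. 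All the steps check out, including the two optimality conditions ($\mathbf y \cdot \mathbf x \ge 0$ from perturbing along $\mathbf x$, and $\mathbf p \cdot \mathbf y = 0$ from scaling $\mathbf p$), and you correctly identify closedness of $C$ as the one point where finiteness of $\mathcal X$ is genuinely needed. This is a complete and appropriate proof of the lemma the authors chose to cite rather than prove.
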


We now give the main result of this section.

\begin{lemma}\label{wfracm}
  For every $\eta > 0$, every directed graph $\Gamma$ on $n$ vertices with $\delta^-(\Gamma) \geq \eta n$ 
  admits a perfect fractional $(\eta, 1 - \eta)$-matching. 
  Furthermore, if $\eta = p/q$ for positive integers $p$ and $q$, then we can assume that
  the weights of the matching are rational numbers with common denominator $D$ bounded above by some function of $p$, $q$ and $n$.
\end{lemma}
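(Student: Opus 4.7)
The plan is to derive the existence of the matching from Farkas' Lemma (Lemma~\ref{farkas}) applied with $\mathbf v = \1$ and $\mathcal X = \{\chi_{\eta,1-\eta}(\dir{v_iv_j}) : \dir{v_iv_j} \in E(\Gamma)\}$. By~\eqref{eq:fracmatch2}, together with the observation that $W \le n$ with equality at every coordinate if and only if the matching is perfect, the existence of a perfect $(\eta,1-\eta)$-weighted fractional matching is equivalent to $\1$ being a weighted sum of the vectors in $\mathcal X$. I would suppose instead that this fails, and let $\mathbf y \in \mathbb R^n$ be the vector supplied by Farkas' Lemma, so that $\eta y_i + (1-\eta) y_j \ge 0$ for every directed edge $\dir{v_iv_j} \in E(\Gamma)$ while $\sum_{i=1}^n y_i < 0$.

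I would then pick $v_{i^*}$ minimising $m := \min_i y_i$. If $m \ge 0$ then $\sum_i y_i \ge nm \ge 0$ is already a contradiction, so assume $m < 0$. Applying the edge inequality to each $\dir{v_j v_{i^*}}$ with $v_j \in N^-_\Gamma(v_{i^*})$ yields $y_j \ge (1-\eta)|m|/\eta$, while $y_i \ge m$ holds trivially for all remaining $i$. Writing $d := |N^-_\Gamma(v_{i^*})|$ and using the hypothesis $d \ge \eta n$ together with the identity $1 + (1-\eta)/\eta = 1/\eta$, summing these lower bounds gives
\[
  \sum_{i=1}^n y_i \;\ge\; (n-d)m + d\cdot\frac{(1-\eta)|m|}{\eta} \;=\; \frac{|m|}{\eta}\bigl(d-\eta n\bigr) \;\ge\; 0,
\]
the desired contradiction, which completes the proof of existence.

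For the rationality claim, observe that the existence of a perfect matching is equivalent to the solvability of the rational linear system $\sum_e x_e\,\chi_{\eta,1-\eta}(e) = \1$, $x\ge 0$. By standard linear programming theory, this system has a basic feasible solution whose support consists of at most $n$ linearly independent columns, so the nonzero weights are the unique solution of a square rational system whose coefficients lie in $\{0,p/q,(q-p)/q\}$ with right-hand side $\1$. After clearing the factor of $q$, the coefficient matrix is an integer matrix with entries in $\{0,p,q-p\}$ of order at most $n$, and Cramer's rule combined with Hadamard's inequality bounds the common denominator $D$ by an explicit function of $p$, $q$ and $n$ (for example $D \le q^n\cdot n^{n/2}$).

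The main obstacle is setting up the Farkas application correctly and locating the right certificate-vertex: once one recognises that the worst case necessarily occurs at the minimum-$y$ vertex, the calculation collapses to the identity above in which the minimum in-degree hypothesis $d\ge \eta n$ is exactly what is needed to make the right-hand side non-negative; the rationality statement is then a standard consequence of basic LP theory.
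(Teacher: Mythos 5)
Your proposal is correct and follows essentially the same route as the paper: both apply Farkas' Lemma with $\mathbf v = \1$ and $\mathcal X$ the set of weighted characteristic vectors, then derive a contradiction from the in-degree condition at the vertex minimising $y$ (the paper uses the single edge from the highest-indexed in-neighbour after sorting, while you sum the bounds over all in-neighbours, but the mechanism is identical), and both obtain rationality from a basic feasible solution. No gaps.
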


\begin{proof}
Let $v_1, \dotsc, v_n$ be an arbitrary ordering of the vertices of $\Gamma$. Then by~\eqref{eq:fracmatch2}, a perfect $(\eta,1 - \eta)$-weighted fractional matching in $\Gamma$ corresponds to a weighted sum of the vectors in
  \begin{equation*}
    \mathcal X := \{ \chi_{\eta,1-\eta}(\dir{v_iv_j}) : \dir{v_iv_j} \in E(\Gamma)\}
  \end{equation*}
  that equals $\mathbf 1$.

  If we assume that $\Gamma$ does not have a perfect $(\eta,1-\eta)$-weighted fractional matching, then, by Farkas' lemma (Lemma~\ref{farkas}), as $\mathbf 1$ is not a weighted sum of the vectors in $\mathcal X$, there exists a vector $\mathbf{y} \in \mathbb{R}^n$ such that $\mathbf{y} \cdot \mathbf{1} < 0$ but $ \mathbf{y} \cdot \chi_{\eta,1-\eta}(\dir{v_iv_j}) \geq 0$ for every $\dir{v_iv_j} \in E(\Gamma)$. By reordering the vertices if necessary, we may assume that $y_1 \geq \ldots \geq y_n$.

  Let $i$ be maximal such that $\dir{v_iv_n} \in E(\Gamma)$, so
  $i \geq \delta^-(\Gamma) \ge \eta n$.  Then, 
$$0 > \mathbf{y}\cdot \mathbf{1} = 
\sum_{j = 1}^{i} y_j + \sum_{j=i+1}^{n}y_j \ge
i y_i + (n - i)y_n \ge
\eta n y_i + (1 - \eta)n y_n = n \mathbf{y} \cdot \chi_{\eta,1-\eta}(\dir{v_iv_n}) \geq 0,$$
a contradiction.

The second statement is implied by basic linear programming theory,
if we take the perfect fractional $(\eta, 1-\eta)$-matching to be one with the smallest possible number of non-zero weights,
as then $w$ is a basic feasible solution.
\end{proof}

Note that if a directed graph $\Gamma$ admits a perfect $(\eta, \xi)$-weighted fractional matching~$w$ with $\eta \le \xi$ and $\eta + \xi = 1$, 
then $\alpha(\Gamma) \le \xi n$, because for every independent set $A$ in $\Gamma$ we have
\begin{align*}
|A| &= 
\sum_{a \in A} \left(\sum_{b \in N^+(a)} \eta w_{\dir{ab}}  + \sum_{b \in N^-(a)} \xi w_{\dir{ba}} \right) 
\leq \xi \sum_{a \in A} \left(\sum_{b \in N^+(a)} w_{\dir{ab}}  + \sum_{b \in N^-(a)} w_{\dir{ba}} \right) \leq \xi W 
\leq \xi n,
\end{align*}
where the initial equality holds since we have equality in~\eqref{eq:fracmatch}, and the penultimate inequality holds because (since $A$ is an independent set) every edge of $\Gamma$ contributes at most once to the sum. This shows that the minimum indegree condition of Lemma~\ref{wfracm} is best possible for $\eta \le 1/2$, since weaker conditions do not preclude the existence of independent sets of size greater than $(1 - \eta) n$.

\section{Triangle-tilings in regular pairs and triples}\label{sec-main}

Loosely speaking, the proof of Theorem~\ref{main} proceeds by iteratively constructing a triangle-tiling in $G$ which covers all of the vertices outside of a small `core' subset of vertices but leaves most vertices inside this `core' uncovered.  This gives a perfect triangle-tiling in~$G$, because the `core' is robust in the sense that it has a perfect triangle-tiling after the removal of any sufficiently small set of vertices (provided that the number of vertices remaining is divisible by $3$). Depending on the structure of the graph $G$, this `core' will either consist of sets $A$ and $B$ which form a super-regular pair with density greater than $\tfrac{1}{2}$, or of sets $A$, $B$ and $C$ which form three super-regular pairs each with density bounded below by a small constant.

We begin with the case where the `core' consists of a super-regular pair of density greater than $\tfrac{1}{2}$ (part (c) of Lemma~\ref{tiling}).  Let $G$ be a graph whose vertex set is the disjoint union of sets $A$ and $B$. Recall that a triangle $T$ in $G$ is an $A$-triangle if $T$ contains two vertices of $A$ and one vertex of $B$, and likewise that $T$ is a $B$-triangle if $T$ contains two vertices of $B$ and one vertex of $A$.

\begin{lemma} \label{tiling}
Suppose that $1/n \ll \gamma \ll \eps \ll \phi, \eps' \ll d \ll \omega$. Let $A$ and $B$ be disjoint sets of vertices with $n/3 + \omega n \leq |A|, |B| \leq 2n/3 - \omega n$ and $|A \cup B| = n$, and let $G$ be a graph on vertex set $V := A \cup B$ with $\alpha(G) \leq \gamma n$. Then the following statements hold.
\begin{enumerate}[label=(\alph*), noitemsep]
\item If $G[A, B]$ is $(\geq \!\! d, \eps)$-regular then $G$ admits a triangle-tiling covering all but at most $2 \eps n$ vertices of $G$. Moreover, for every $a$ and $b$ with $2a+b \leq |A| - \eps n$ and $a+2b \leq |B|-\eps n$ there is a triangle-tiling in $G$ which consists of $a$ $A$-triangles and $b$ $B$-triangles.
\item If $G[A, B]$ is $(d, \eps)$-super-regular then, provided $|A \sm S| + |B| + \floor{\phi \eps' n}$ is divisible by $3$, for every $S \subseteq A$ of size $|S| = \phi n$ there is a triangle-tiling in $G$ which covers every vertex of $G[V \sm S]$ and which covers precisely $\floor{\phi \eps' n}$ vertices of $S$. 
\item If $n$ is divisible by $3$ and $G[A, B]$ is $(1/2 + d, \eps)$-super-regular then $G$ contains a perfect triangle-tiling.
\end{enumerate}
\end{lemma}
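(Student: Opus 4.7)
The plan is to prove the three parts in order, each using the previous as a building block.

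For part~(a), the strategy is a greedy construction. Maintain remaining sets $A' \subseteq A$ and $B' \subseteq B$, initially $A' = A$ and $B' = B$. To create an $A$-triangle, pick an edge inside $A'$ (which exists whenever $|A'| > \gamma n$, by $\alpha(G) \le \gamma n$), then find a common neighbour in $B'$ of its two endpoints; this exists by the regularity of $G[A,B]$ provided $|A'| \ge \eps|A|$ and $|B'| \ge \eps|B|$, since by regularity at most $\eps|A|$ vertices of $A'$ have fewer than $(d-\eps)|B'|$ neighbours in $B'$, so most pairs in $A'$ have plenty of common neighbours. Do the symmetric thing for $B$-triangles. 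For the unqualified assertion of~(a), run the process greedily until it halts, which must happen when $|A'| < \eps|A|$ or $|B'| < \eps|B|$, leaving at most $2\eps n$ uncovered vertices. For the moreover statement, fix the target counts $a$ and $b$ in advance; the hypotheses $2a+b \le |A|-\eps n$ and $a+2b \le |B|-\eps n$ ensure that throughout the process $A'$ and $B'$ never drop below the regularity thresholds.

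For part~(b), super-regularity must be exploited to cover \emph{every} vertex of $V \sm S$, not just all but a few. The plan is to reserve a small ``absorber'' before running the greedy process. Apply Lemma~\ref{absorber} to an auxiliary bipartite graph $F$ whose two classes are a subset of $V \sm S$ and a collection of disjoint pairs of vertices on the opposite side, with an edge of $F$ between a single vertex and a pair whenever the three together form a triangle in $G$. Super-regularity ensures $F$ has density bounded below by a positive constant, so the lemma yields subsets $X$ and $Y$ such that for any $X' \subseteq X$ of the right size there is a perfect matching in $F[X', Y]$, which translates into a triangle-tiling in $G$ covering precisely $X' \cup V(Y)$. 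After removing this reserved absorbing structure, use the moreover part of~(a) on the remainder with $a$ and $b$ tuned so that the uncovered portion is exactly what the absorber can digest, and use the divisibility hypothesis to fix parity; a symmetric construction handles absorbing vertices on the $B$ side.

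For part~(c), the density excess $\tfrac{1}{2} + d$ permits a direct argument. By inclusion-exclusion on the super-regular neighbourhoods, any two vertices $a_1, a_2 \in A$ satisfy $|N(a_1) \cap N(a_2) \cap B| \ge (2d - 2\eps)|B|$, and symmetrically inside $B$, so common neighbourhoods are enormous. Set $x := (2|A|-|B|)/3$ and $y := (2|B|-|A|)/3$; both are non-negative integers by the size assumptions and $3 \mid n$. A perfect triangle-tiling corresponds to choosing matchings $M_A \subseteq G[A]$ of size $x$ and $M_B \subseteq G[B]$ of size $y$, together with two systems of distinct representatives: one assigning to each edge of $M_A$ a vertex of $B \sm V(M_B)$ completing a triangle, and one assigning to each edge of $M_B$ a vertex of $A \sm V(M_A)$ completing a triangle. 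Build $M_A$ and $M_B$ greedily (using $\alpha(G) \le \gamma n$ to ensure edges always exist inside $A$ and inside $B$), and then verify Hall's condition for the two assignment problems; the huge common-neighbourhood lower bound makes both Hall checks essentially trivial.

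The main obstacle I expect is part~(b): the bipartite-pair absorber must be flexible enough to accept leftover vertices from both $A \sm S$ and $B$ while respecting the divisibility of the uncovered count by~$3$, which probably requires tailoring the auxiliary graph in Lemma~\ref{absorber} carefully (or invoking it twice, once per side). Part~(c) is then a comparatively clean matching-plus-Hall argument made easy by the excess density~$d$.
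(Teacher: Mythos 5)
Your plan puts the absorber machinery in the wrong place, and both (b) and (c) break as a result. In (b), the assertion that super-regularity of $G[A,B]$ gives the auxiliary vertex-versus-pair graph a density bounded below by a positive constant is false when $d<1/2$: for an edge $xy$ of a matching in $G[B]$, super-regularity only guarantees $|N(x)\cap A|,|N(y)\cap A|\ge (d-\eps)|A|$, and these two sets may be disjoint, so the pair $xy$ may complete no triangle at all. Regularity controls co-degrees of \emph{typical} pairs, not of the particular pairs handed to you by the condition $\alpha(G)\le\gamma n$ (which produces far fewer edges than there are atypical pairs). This is exactly why the paper invokes Lemma~\ref{absorber} only in part (c), where density $1/2+d$ forces $|N(x)\cap N(y)\cap A|\ge(2d-2\eps)|A|$ by inclusion--exclusion. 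The paper's (b) needs no absorber: $S$ itself is the flexible reservoir, and the proof is a direct construction (cover the few vertices of $B$ with small degree into $S$, set aside a random $B_2\subseteq B$, apply the ``moreover'' part of (a) with tuned $a,b$, then absorb the leftover vertices of $B$ into $S$ via $A$-triangles). Conversely, your (c) discards the absorber precisely where it is indispensable. The common neighbourhoods there are not ``enormous'': the guaranteed size is only $(2d-2\eps)|B|$, and since $d\ll\omega$ this is far smaller than $|V(M_B)|=2y\ge 2\omega n$, so the common neighbourhood of an edge of $M_A$ may lie entirely inside $V(M_B)$ and Hall's condition fails already for single edges. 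More fundamentally, no greedy-matchings-plus-SDR scheme achieves an \emph{exact} cover from these hypotheses; the paper proves (c) by building a matching $M$ in $G[B]$, applying Lemma~\ref{absorber} to the (genuinely dense) auxiliary graph between $A$ and $M$ to get robustly matchable $X\subseteq A$ and $M'\subseteq M$, and then invoking part (b) with $S:=X$ so that the leftover $|M'|$ vertices of $X$ are absorbed by $M'$.

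Part (a) also has a gap, though a local and fixable one: the order of operations. An arbitrary edge of $G[A']$ produced by $\alpha(G)\le\gamma n$ need not have a common neighbour in $B'$ --- two vertices each with $(d-\eps)|B'|$ neighbours in $B'$ can have disjoint neighbourhoods when $d<1/2$, and $(\geq\!\!d,\eps)$-regularity says nothing about the co-degree of one specific pair. The paper reverses the order: since $d_G(A',B')\ge d-\eps$, some $x\in B'$ has at least $(d-\eps)|A'|\ge(d-\eps)\eps n>\gamma n$ neighbours in $A'$, and $\alpha(G)\le\gamma n$ then yields an edge inside $N(x)\cap A'$, giving an $A$-triangle (and symmetrically a $B$-triangle). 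With that correction your greedy scheme for (a), including the ``moreover'' statement via the choice of $a$ and $b$, goes through.
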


\begin{proof}
  For (a) the triangles may be chosen greedily. Indeed, suppose that we have already chosen a triangle-tiling $\T$ consisting of at most $a$ $A$-triangles and at most $b$ $B$-triangles, then $\T$ covers at most $2a + b$ vertices of $A$, and at most $a+2b$ vertices of $B$. Taking $A' = A \sm V(\T)$ and $B' = B \sm V(\T)$, we find that $|A'|, |B'| \geq \eps n$. Since $G[A, B]$ is $(\geq \!\! d, \eps)$-regular it follows that $d_G(A', B') \geq d - \eps$, therefore some vertex $x \in A'$ has at least $(d-\eps)|B'| \geq (d-\eps)\eps n > \gamma n$ neighbours in $B'$. Since $\alpha(G) \leq \gamma n$ it follows that some two of these neighbours are adjacent, giving a $B$-triangle which can be added to $\T$. The same argument with the roles of $A'$ and $B'$ reversed yields instead an $A$-triangle which may be added to $\T$. This proves the second statement of~(a); the first follows by setting $a = \tfrac{1}{3}(2|A| - |B| - \eps n)$ and $b = \tfrac{1}{3}(2|B| - |A| - \eps n)$.

Next, for (b), 
let $z := \floor{\phi \eps' n}$, 
$t_4 := \floor{z/2}$ and $z' := z - 2 t_4 \in \{0, 1\}$,
so we will construct a triangle-tiling that covers all of $(A \sm S) \cup B$ and
exactly $z = 2t_4 + z'$ vertices of $S$.
Let $B_1' \subseteq B$ consist of all vertices in $B$ with fewer than $(d-\tfrac{\eps}{\phi}) |S|$ neighbours in $S$; since $G[S, B]$ is $(\geq \!\! d, \tfrac{\eps}{\phi})$-regular we have $|B_1'| \leq \tfrac{\eps}{\phi} n$. 
Form $B_1$ by adding at most $2$ arbitrarily selected vertices from $B \sm B_1'$ to $B_1'$ 
so that $|B \sm B_1| - t_4$ is divisible by $3$.
Since $G[A, B]$ is $(d, \eps)$-super-regular, every vertex of $B_1$ has at least $(d-\eps)|A| - |S| \geq \tfrac{dn}{3} > 2|B_1| + \gamma n$ neighbours in $A \sm S$. Since $\alpha(G) \leq \gamma n$, 
we may greedily form a triangle-tiling $\T_1$ of $A$-triangles in $G$ of size $|B_1|$ which covers every vertex of $B_1$ and
does not use any vertex from $S$. 
We now select uniformly at random a subset $B_2 \subseteq B \sm B_1$ of size $|B_2| = t_4$. 
Since every vertex in $A$ has at least $(d-\eps)|B| - |B_1| \geq \tfrac{dn}{3}$ neighbours in $B \sm B_1$, Theorem~\ref{chernoff} implies that, with probability $1-o(1)$, every vertex of $A$ has at least $\tfrac{\phi\eps' d}{7}n$ neighbours in $B_2$. Fix a choice of $B_2$ for which this event occurs. 
Let $S'$ be an arbitrarily selected subset of $S$ of size $z'$ (so $S'$ is either empty or a singleton) and
let $A' := (A \sm (S \cup V(\T_1))) \cup S'$
and $B' := B \sm (B_1 \cup B_2)$.
Recall that, by assumption, $|A \sm S| + |B| + z$ is divisible by $3$, so
\begin{equation*}
|A'| + |B'| = |A \sm S| + z' + |B| - |B_2| - |V(\T_1)|  = 
\left(|A \sm S| + |B| + z \right) - \left(3t_4 + |V(\T_1)|\right)
\end{equation*}
is divisible by $3$. Since $|B'|$ is divisible by $3$ by our selection of $B_1$ and $B_2$, it follows that $|A'|$ is divisible by $3$ as well.
Let $t_3 = \floor{\tfrac{\phi\eps' d}{15}n}$, 
$a := \tfrac{2}{3}|A'|-\tfrac{1}{3}|B'|$ and $b := \tfrac{2}{3}|B'|-\tfrac{1}{3}|A'| - t_3$. 
Since $G[A', B']$ is $(\geq \!\! d, \tfrac{\eps}{2})$-regular, (a) implies that there is a triangle-tiling $\T_2$ in $G[A' \cup B']$ such that $A'' := A' \sm V(\T_2)$ and $B'' := B' \sm V(\T_2)$ have sizes precisely $|A''| = |A'| - (2a+b) = t_3$ and $|B''| = |B'| - (a+2b) = 2t_3$. Since by the choice of $B_2$ each vertex of $A''$ has at least $\tfrac{\phi \eps' d}{7}n > 2|A''| + \gamma n$ neighbours in $B_2$, we may greedily form a triangle-tiling $\T_3$ in $G[A'' \cup B_2]$ consisting of exactly $t_3$ $B$-triangles which covers every vertex of $A''$ and which covers precisely $2t_3$ vertices of $B_2$. At this point we have obtained a triangle-tiling $\T_1 \cup \T_2 \cup T_3$ in $G$ which covers every vertex of $A$ except for those in $S \sm S'$ and every vertex of $B$ except for the precisely $2 t_3$ vertices in $B''$ and the precisely $t_4 - 2 t_3$ vertices in $B_2 \sm V(\T_3)$. Therefore, in total, precisely $t_4$ vertices of $B$ remain uncovered, each of which has at least $(d-\tfrac{\eps}{\phi}) |S| - |S'| > 2|B_2| + \gamma n$ neighbours in $S \sm S'$ by the choice of $B_1$. We may therefore greedily form a triangle-tiling $\T_4$ of $A$-triangles in $G$ which covers all the remaining uncovered vertices in $B$ and precisely $2t_4$ vertices of $S \sm S'$. Then $\T_1 \cup \T_2 \cup \T_3 \cup \T_4$ is the claimed triangle-tiling.

Finally, since none of the assumptions for (c) involve $\phi$ or $\eps'$, we may assume that $\phi \ll \eps'$. We also assume without loss of generality that $|B| \ge |A|$.  Since $\alpha(G) \leq \gamma n$, we may greedily form a matching $M$ of size at least $(|B|-\gamma n)/2 \geq n/10$ in $G[B]$. Fix such a matching $M$, and form an auxiliary bipartite graph $H$ with vertex classes $A$ and $M$ where $a \in A$ and $e = xy \in M$ are adjacent if and only if $xyz$ is a triangle in $G$. Note that for every edge $e = xy \in M$ we have that 
$$\deg_H(e) = |N_G(x) \cap N_G(y) \cap A| \geq 2 ((1/2 + d) - \eps)|A| - |A| \geq d |A|,$$ so $H$ has density at least $d$. By Lemma~\ref{absorber}, applied to $H$ with $\eps'$ here in place of $\eps$ there, we may choose subsets $X \subseteq A$ and $M' \subseteq M$ such that $|X| = \phi n$, $|M'| = (1-\eps)\phi n$ and such that $H[X', M']$ contains a perfect matching for every subset $X' \subseteq X$ with $|X'| = |M'|$. Let $B' := B \sm V(M')$ and $n' := |A| \cup |B'|$. 
Then, since we assumed that $|B| \ge |A|$, we have $n'/3 + \omega n' \leq |A|, |B'| \leq 2n'/3 - \omega n'$, so we can apply (b) to $G[A \cup B']$ with $A$, $B'$ and $X$ in place of $A$, $B$ and $S$ respectively to obtain a triangle-tiling $\T_1$ in $G$ which covers every vertex of $G$ except for the vertices of $V(M')$ and precisely $(1-\eps')\phi n$ vertices of $X$. So, taking $X'$ to be the vertices of $X$ not covered by $\T_1$, we have $|X'| = |M'|$. By the choice of $X$ and $M'$ it follows that $H[X', M']$ contains a perfect matching, which corresponds to a perfect triangle-tiling $\T_2$ in $G[X' \cup V(M')]$. This gives a perfect triangle-tiling $\T_1 \cup \T_2$ in $G$.
\end{proof}

We now turn to the case where the `core' consists of three sets which form three super-regular pairs, for which the following lemma is analogous to Lemma~\ref{tiling}.

\begin{lemma} \label{tripartitetiling}
Suppose that $1/n \ll \gamma, \eps \ll d, \omega$, and that $3$ divides $n$. Let $V_1, V_2$ and $V_3$ be disjoint sets of vertices with $|V_i| \geq n/6 + \omega n$ for each $i \in [3]$ such that $V := \bigcup_{i \in [3]} V_i$ has size $|V| = n$. Let $G$ be a graph on vertex set $V$ with $\alpha(G) \leq \gamma n$ such that $G[V_i, V_j]$ is $(d, \eps)$-super-regular for each distinct $i, j \in [3]$. Then $G$ contains a perfect triangle-tiling.
\end{lemma}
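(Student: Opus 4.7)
The plan is to adapt the absorbing argument from the proof of Lemma~\ref{tiling}(c) to the tripartite setting. Assume without loss of generality that $|V_1|\ge|V_2|\ge|V_3|$. The proof will proceed in two stages: first construct an absorbing structure $(X,M')$ with $X\subseteq V_1$ and $M'$ a matching in $G[V_2,V_3]$ capable of absorbing small leftovers from $V_1$ via $(1,1,1)$-triangles; and then, by a greedy mixture of $(2,1,0)$-type and $(1,1,1)$-triangles, find a triangle-tiling on the complement covering exactly $|V_i|-(1-\eps')\phi n$ vertices of each $V_i$ and leaving the absorbable leftover inside $X$.

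To build the absorber, I would fix a large matching $M$ in the super-regular pair $G[V_2,V_3]$, form the bipartite auxiliary graph $H$ on vertex classes $V_1$ and $M$ in which $v\in V_1$ is adjacent to $xy\in M$ iff $vxy$ is a triangle of $G$, and apply Lemma~\ref{absorber}. The critical input is a density lower bound on $H$: unlike in Lemma~\ref{tiling}(c), where density $\tfrac12+d$ on $G[A,B]$ gave a per-edge degree bound, here $d$ may be well below $\tfrac12$ and no such direct bound is available. Instead I would triple-count $(1,1,1)$-triangles: for each $v\in V_1$, $|N(v)\cap V_2|\ge(d-\eps)|V_2|$ and $|N(v)\cap V_3|\ge(d-\eps)|V_3|$, and since $G[V_2,V_3]$ is $(\ge\!\!d,\eps)$-regular, the number of edges between these neighbourhoods is at least $(d-\eps)^3|V_2||V_3|$; summing over $v$ gives at least $(d-\eps)^3|V_1||V_2||V_3|$ $(1,1,1)$-triangles in $G$. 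Hence the average edge of $G[V_2,V_3]$ lies in at least $(d-\eps)^3|V_1|$ such triangles, and restricting $M$ to the \emph{good} edges lying in at least half this number of triangles yields a matching of size $\Theta(n)$ with $d_H(V_1,M)\ge\tfrac12(d-\eps)^3$. Lemma~\ref{absorber} then supplies $X\subseteq V_1$ and $M'\subseteq M$ of sizes $\phi n$ and $(1-\eps')\phi n$ with the desired absorbing property.

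Set $V_1':=V_1\sm X$ and $V_i':=V_i\sm V(M')$ for $i\in\{2,3\}$; the three pairs remain super-regular with slightly weaker parameters by Lemma~\ref{random-slicing}. A short computation shows that covering exactly $|V_i|-(1-\eps')\phi n$ vertices of each $V_i$ is achieved by a tiling consisting of $a:=n/3-|V_3|$ triangles of type $(2,1,0)$, together with either $b:=n/3-|V_2|$ triangles of type $(2,0,1)$ when $|V_2|\le n/3$ or $c:=|V_2|-n/3$ triangles of type $(0,2,1)$ when $|V_2|>n/3$, plus a number $T$ of $(1,1,1)$-triangles determined by the remaining equation. All counts are non-negative integers: integrality uses $n\equiv0\pmod3$, and $T\ge0$ follows from $|V_1|\le 2n/3-2\omega n$ in the first case and from $|V_3|\ge n/6+\omega n$ together with the bound $|V_2|\le 5n/12$ (forced by $|V_1|\ge|V_2|$ and $|V_3|\ge n/6$) in the second, using $\phi\ll\omega$. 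These triangles are then found greedily in $G[V_1\cup V_2'\cup V_3']$: for a $(2,1,0)$-triangle pick any unused $v\in V_2'$ and use $\alpha(G)\le\gamma n$ together with $|N(v)\cap V_1'|\gg\gamma n$ (by super-regularity) to find an edge in $N(v)\cap V_1'$; for a $(1,1,1)$-triangle pick $v$ in one of the classes and apply Lemma~\ref{slicing} to the super-regular subpair on $N(v)\cap V_j'$ and $N(v)\cap V_k'$ to find an edge. Taking $V_1$-vertices preferentially from $V_1'$ and only switching to $X$ for the final $\eps'\phi n$ vertices ensures that the $(1-\eps')\phi n$ uncovered vertices of $V_1$ all lie in $X$, after which the absorbing property of $(X,M')$ completes the perfect triangle-tiling.

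The main obstacle I foresee is the density lower bound on $H$ in the absorber construction: when $d<\tfrac12$, no per-edge degree bound works, so the triple-counting of $(1,1,1)$-triangles through all three super-regular pairs is essential. A secondary point requiring care is the case split $|V_2|\le n/3$ versus $|V_2|>n/3$ and the verification that all the counts $a,b,c,T$ are non-negative integers, but this follows cleanly from $n\equiv0\pmod3$ and the size bounds $|V_i|\ge n/6+\omega n$.
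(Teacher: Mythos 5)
Your route is genuinely different from the paper's: the paper reserves random sets $Z_j\subseteq V_j$ of size $\omega n$ (so that every vertex keeps many neighbours in each $Z_j$), then repeatedly deletes triangles with two vertices in the largest remaining class and one in the middle class until the three classes have equal size, and finishes with the Blow-up Lemma (Theorem~\ref{blowup}) on the balanced super-regular remainder. This sidesteps both the absorber and any exact greedy covering. Your triple-counting of $(1,1,1)$-triangles to get a density bound for the auxiliary graph $H$ when $d<\tfrac12$ is a correct and necessary observation for your approach, and your case analysis for the counts $a,b,c,T$ checks out (indeed $T\ge 2\omega n-O(\phi n)>0$ in both cases).

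However, the covering stage has a genuine gap. You need a tiling that covers \emph{exactly} $|V_i|-(1-\eps')\phi n$ vertices of each class, i.e.\ all of $V_1\sm X$, all of $V_i\sm V(M')$ for $i=2,3$, and exactly $\eps'\phi n$ vertices of $X$. A greedy argument of the kind in Lemma~\ref{tiling}(a) only covers all but $\Theta(\eps n)$ vertices: once the uncovered parts of the classes have size $o(\eps n)$ (and your targets are $\Theta(\phi n)$ with $\phi\ll\eps$ forced by Lemma~\ref{absorber}), $(d,\eps)$-regularity and super-regularity give no control — $N(v)\cap V_j'$ may be empty for the surviving vertices — so Lemma~\ref{slicing} cannot be applied and the greedy stalls. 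This endgame is exactly what the proof of Lemma~\ref{tiling}(b) spends most of its effort on (reserving a random set $B_2$ in advance so that every vertex retains $\Theta(\phi\eps' d\, n)$ neighbours there, and finishing in two controlled rounds), and you would need to replicate that machinery in the tripartite setting rather than assert a greedy mixture. Two smaller repairable issues in the absorber construction: you fix $M$ first and then restrict to good edges, but an arbitrary maximal matching could consist almost entirely of bad edges — you must instead extract a linear-sized matching from the $\Theta(n^2)$ good edges; and Lemma~\ref{absorber} requires $n'/10\le|A|,|B|\le n'$, whereas your $|M|$ can be smaller than $|V_1|/10$ for small $d$, so you must also pass to a subset of $V_1$ of comparable size while preserving the density of $H$.
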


To prove Lemma~\ref{tripartitetiling} we use the celebrated Blow-up Lemma of Koml\'os, S\'ark\"ozy and Szemer\'edi~\cite{KSSz97} to obtain a perfect triangle-tiling. For simplicity, we state this only in the (very) special case that we use. Note that our definition of super-regularity differs slightly from theirs, but it is not hard to show that the two definitions are equivalent up to some modification of the constants involved (see, for example,~\cite[Fact 2]{RR}), so the validity of Theorem~\ref{blowup} is unaffected.

\begin{theorem} [Blow-up Lemma for triangle-tilings] \label{blowup}
Suppose that $1/n \ll \eps \ll d$. 
Let $A, B$ and $C$ be disjoint sets of vertices with $|A| = |B| = |C| = n$, and let $G$ be a graph on vertex set $V := A \cup B \cup C$ such that $G[A, B]$, $G[B, C]$ and $G[C, A]$ are each $(d, \eps)$-super-regular. Then $G$ contains a perfect triangle-tiling.
\end{theorem}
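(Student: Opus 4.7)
The plan is to deduce Theorem~\ref{blowup} as an immediate special case of the full Blow-up Lemma of Koml\'os, S\'ark\"ozy and Szemer\'edi~\cite{KSSz97}. Recall that the full statement of their lemma says that if $R$ is a graph on $k$ vertices with maximum degree $\Delta$, and $G$ is a ``blown-up'' host graph with parts $W_1,\dots,W_k$ of equal size such that $G[W_i,W_j]$ is $(d,\eps)$-super-regular for every edge $ij\in E(R)$, then for every spanning subgraph $H$ of the complete blow-up $R[|W_1|,\dots,|W_k|]$ with $\Delta(H)\le\Delta_0$ (where $\Delta_0$ depends only on $d$ and $\Delta$, provided $\eps$ is small enough in terms of $d$ and $\Delta$), the graph $G$ contains a copy of $H$ that respects the partition.

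First, I would set $R = K_3$, so that $\Delta(R)=2$. The hypotheses of Theorem~\ref{blowup} — that $|A|=|B|=|C|=n$ and $G[A,B]$, $G[B,C]$, $G[C,A]$ are each $(d,\eps)$-super-regular — then coincide precisely with the hypothesis required by the Blow-up Lemma for the reduced graph $R = K_3$ and parts $W_1 := A$, $W_2 := B$, $W_3 := C$. Second, I would specify the spanning subgraph $H$ to embed: fix arbitrary bijections identifying $A$, $B$, $C$ each with $[n]$, write $a_i,b_i,c_i$ for the corresponding vertices, and take
\begin{equation*}
H := \bigcup_{i\in[n]} \{a_ib_i,\; b_ic_i,\; c_ia_i\}.
\end{equation*}
Then $H$ is a spanning subgraph of the complete tripartite graph on $(A,B,C)$, consists of $n$ vertex-disjoint triangles (one per index $i$), and has $\Delta(H)=2$. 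Applying the Blow-up Lemma then yields an embedding of $H$ into $G$ that respects the tripartition, which is exactly a perfect triangle-tiling of $G$.

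The only minor issue, already flagged in the excerpt just before the statement, is that our notion of super-regularity differs slightly from the one used in~\cite{KSSz97}: their definition typically asks that the bipartite graph $G[W_i,W_j]$ itself be $\eps$-regular (in terms of the density deviation on large sub-pairs) together with a minimum-degree condition, while ours asks for $(\ge d,\eps)$-regularity together with the minimum-degree condition. As the authors note (citing~\cite{RR}), these two notions are equivalent up to an adjustment of the constants: an $(\ge d,\eps)$-super-regular pair is $(d',\eps')$-super-regular in the Koml\'os--S\'ark\"ozy--Szemer\'edi sense for some $d' \ge d - \eps$ and $\eps' = O(\eps)$. So with $1/n\ll\eps\ll d$ one obtains parameters still small enough in the right sense to apply their Blow-up Lemma with $\Delta=2$, and no further work is required.

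The main (and only) obstacle is therefore bookkeeping: one needs to check that the parameter hierarchy $1/n\ll\eps\ll d$ in the statement is tight enough to feed into the Blow-up Lemma with reduced-graph maximum degree $2$ and embedded-graph maximum degree $2$. Since the thresholds there depend only on $d$ and the (bounded) maximum degrees, the implication $1/n\ll\eps\ll d$ is already the correct hierarchy, and no quantitative surprises arise.
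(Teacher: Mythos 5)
Your proposal is correct and matches the paper's treatment: the paper likewise presents Theorem~\ref{blowup} as the special case of the Koml\'os--S\'ark\"ozy--Szemer\'edi Blow-up Lemma with reduced graph $K_3$ and embedded graph a perfect triangle factor, handling the slight mismatch in the definition of super-regularity exactly as you do, via the equivalence noted in~\cite{RR}. No differences worth noting.
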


The proof of Lemma~\ref{tripartitetiling} proceeds by iteratively deleting triangles from $G$ with two vertices in one cluster and one in another cluster, until the same number of vertices remain in each cluster. We complete the proof by applying the Blow-up Lemma to obtain a perfect triangle-tiling covering all remaining vertices.

\begin{proof}[Proof of Lemma~\ref{tripartitetiling}]
Throughout this proof we perform addition on subscripts modulo~$3$. For each $i \in [3]$, the fact that $G[V_i, V_{i+1}]$ is $(d, \eps)$-super-regular implies that each vertex $v \in V_i$ has $|N(v) \cap V_{i+1}| \geq (d-\eps)|V_{i+1}| \geq dn/6$. So if we choose uniformly at random a set $Z_j \subseteq V_j$ of size $\omega n$ for each $j \in [3]$, then $|N(v) \cap Z_{i+1}|$ is hypergeometrically distributed with expectation at least $d \omega n/6$. By Theorem~\ref{chernoff} the probability that $v$ has fewer than $d \omega n/7$ neighbours in $|Z_{i+1}|$ declines exponentially with $n$, and likewise the same is true of the probability that $v$ has fewer than $d \omega n/7$ neighbours in $|Z_{i+2}|$. Taking a union bound, with positive probability it holds that for each $i \in [3]$ every vertex $v \in V_i$ has at least $d \omega n/7$ neighbours in each of $Z_{i+1}$ and $Z_{i+2}$. We fix such an outcome of our random selection of the sets $Z_j$, and define $X_i^0 = V_i \sm Z_i$ for each $i \in [3]$. Without loss of generality we may assume that $\frac{n}{6} \leq |X_1^0| \leq |X_2^0| \leq |X_3^0| \leq \frac{2n}{3} - 3\omega n$. 

We now proceed by an iterative process. At time step $t \geq 0$, if we have $|X_1^t| = |X_2^t| = |X_3^t|$ then we terminate. Otherwise, we choose a triangle $xyz$ in $G$ with $x \in X_2^t$ and $y, z \in X_3^t$ (we shall explain shortly why this will always be possible). We then set $Y_j^{t+1} := X_j^t \sm \{x, y, z\}$ for $j \in [3]$ and define $X_1^{t+1}, X_2^{t+1}$ and $X_3^{t+1}$ such that $\{X_1^{t+1}, X_2^{t+1}, X_3^{t+1}\} = \{Y_1^{t+1}, Y_2^{t+1}, Y_3^{t+1}\}$ and $|X_1^{t+1}| \leq |X_2^{t+1}| \leq |X_3^{t+1}|$, before proceeding to the next time step~$t+1$.

Suppose that this procedure does not terminate prior to some time step $T$. Using the fact that $3$ divides $n$ it is easily checked that we must then have $|X_3^{t+2}| - |X_1^{t+2}| \leq |X_3^{t}| - |X_1^{t}| - 3$ for each $t \in [T-2]$. In other words, the size difference between the smallest and largest set decreases by at least 3 over each two time steps. Similarly we find that $|X_1^{t}| - |X_1^{t+2}| \leq 1$ for each $t \in [T-2]$, meaning that the smallest set size decreases by at most 1 over each two time steps. Furthermore, if at some time $t$ we have $0 < |X_3^{t}| - |X_1^{t}| < 3$, then (since $3$ divides $n$) we must have $|X_1^t|+2 = |X_2^t| +1 = |X_3^t|$, whereupon the procedure will terminate at time $t+1$. It follows that the procedure must terminate at some time $T$, and moreover that 
$$T \leq \frac{2}{3}\left(|X_3^0| - |X_1^0|\right) \leq \frac{2}{3}\left(\left(\frac{2n}{3} - 3 \omega n\right) - \frac{n}{6}\right) = \frac{n}{3} - 2 \omega n.$$
This implies that at each time $t < T$ we have $|X_3^t| \geq |X_2^t| \geq |X_1^t| \geq |X_1^0| - \ceiling{\frac{t}{2}} \geq |X_1^0| - \frac{T}{2} \geq \omega n$, and so throughout the procedure it is always possible to pick a triangle as desired. Indeed, $G[X_2^t, X_3^t]$ is $(\geq\!\!d, \eps/\omega)$-regular by the Slicing Lemma~(Lemma~\ref{slicing}), so some vertex of $X_2^t$ has at least $(d-\eps/\omega)|X_3^t| \geq \omega d n/2$ neighbours in $X_3^t$. Since $\alpha(G) \leq \gamma n < \omega dn/2$ some two of these neighbours must be adjacent, giving the desired triangle. 

After the procedure terminates, define $V'_i := X_i^T \cup Z_i$ for each $i \in [3]$. Then $|V'_1| = |V'_2| = |V'_3| \ge 2 \omega n$, so by Lemma~\ref{slicing} and our choice of the sets $Z_j$ it follows that $G[V'_i, V'_j]$ is $(d \omega/7, \eps/2\omega)$-super-regular for each distinct $i, j \in [3]$. By Theorem~\ref{blowup}
there is a perfect triangle-tiling in $G[\bigcup_{i \in [3]} V_i']$; together with the triangles selected by the iterative procedure this gives a perfect triangle-tiling in $G$.
\end{proof}

\section{Proof of Theorem~\ref{main}}

In this section we prove Theorem~\ref{main}. The following lemma is the central part of the proof, showing that if a graph~$G$ can be decomposed into clusters which form regular and super-regular pairs, indexed by a graph~$R$ which admits a bounded degree spanning tree, then by `working inwards' from the leaves of the tree we can form a perfect triangle-tiling in~$G$.

\begin{lemma} \label{treeargument}
Suppose that $1/m \ll \gamma \ll 1/k \ll \eps \ll d, \omega$.
Let~$G$ be a graph whose vertex set is partitioned into~$k$ sets $V_1, \dots, V_k$, and let~$R$ be a graph with vertex set~$[k]$ which admits a spanning tree~$T$ of maximum degree at most 10. Suppose also that the following statements hold.
\begin{enumerate}[label=(\alph*), noitemsep]
\item \label{tree-a} $|V_1| \ge (1-\eps) m$. 
\item \label{tree-b} $V_1$ admits either a partition into parts~$A_1$ and~$B_1$ with $|A_1|, |B_1| \geq (1/3+\omega)|V_1|$ such that $G[A_1, B_1]$ is $(1/2+d, \eps)$-super-regular, or a partition into parts $A_1, B_1$ and~$C_1$ with $|A_1|, |B_1|, |C_1| \geq (1/6+\omega)|V_1|$ such that $G[A_1, B_1]$, $G[A_1, C_1]$ and $G[B_1, C_1]$ are each $(d, \eps)$-super-regular. 
\item \label{tree-c} For each $2 \leq i \leq k$, $(1 - \eps)m \le |V_i| \le m$ and~$V_i$ admits a partition into parts~$A_i$ and~$B_i$ with $|A_i|, |B_i| \geq (1/3+\omega)m$ such that $G[A_i, B_i]$ is $(d, \eps)$-super-regular. 
\item \label{tree-d} If $ij \in E(R)$, then at least~$m/5$ vertices of~$V_i$ have at least~$dm/5$ neighbours in~$V_j$.
\item \label{tree-e} $\alpha(G) \leq \gamma m$.
\end{enumerate}
Then~$G$ contains a triangle-tiling covering all but at most two vertices of $G$.
\end{lemma}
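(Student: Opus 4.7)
The plan is to root the spanning tree $T$ of $R$ at vertex $1$ and process the non-root clusters $V_2, \dots, V_k$ in reverse BFS order (children before parents), covering each entirely by a combination of internal triangles and cross-triangles with its parent cluster. At the end we apply Lemma~\ref{tiling}(c) or Lemma~\ref{tripartitetiling} to $V_1$ according to which case of hypothesis~\ref{tree-b} holds. Throughout, cross-triangles are built using condition~\ref{tree-d}: any vertex with at least $dm/5$ neighbours in an adjacent cluster has two adjacent such neighbours (since $\alpha(G)\leq \gamma m < dm/5$), producing a triangle. By choosing how many $V_i$-vertices (one or two) each cross-triangle uses, and how those vertices are distributed between $A_i$ and $B_i$, we gain enough flexibility to adjust sizes modulo~$3$ at every step.

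Fix an auxiliary constant $\phi$ with $\eps \ll \phi \ll d, \omega$. When processing a non-root cluster $V_i$, let $U_i \subseteq V_i$ be the set of vertices already covered by cross-triangles formed when children of $i$ were processed; since there are at most $10$ children and each contributes $O(\phi m)$ vertices, $|U_i| = O(\phi m)$. Using condition~\ref{tree-d}, and permuting the roles of $A_i$ and $B_i$ if necessary, select $S_i$ of size $\phi m$ inside one part of the partition of $V_i \sm U_i$, consisting entirely of vertices with at least $dm/5$ neighbours in $V_{p(i)}$. First form a constant number of cross-triangles between $V_i$ and $V_{p(i)}$ to satisfy the mod-$3$ divisibility hypothesis of Lemma~\ref{tiling}(b); next apply Lemma~\ref{tiling}(b) to $G[V_i \sm U_i]$ with $S_i$ playing the role of $S$, producing a triangle-tiling that covers all of $V_i \sm (U_i \cup S_i)$ and exactly $\lfloor \phi \eps' m \rfloor$ vertices of $S_i$. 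Finally, for each of the remaining $\approx \phi m$ uncovered vertices $v \in S_i$, greedily select two adjacent unused neighbours $w_1, w_2 \in V_{p(i)}$ (available because $v$ has $\geq dm/5$ neighbours in $V_{p(i)}$ and only $O(\phi m)$ have been used so far) and form the triangle $vw_1w_2$; the vertices $w_1, w_2$ join $U_{p(i)}$ for the next step.

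After all non-root clusters have been processed, the only uncovered vertices lie in $V_1 \sm U_1$, with $|U_1| = O(\phi m)$. A final application of Lemma~\ref{tiling}(c) (in the bipartite case of hypothesis~\ref{tree-b}) or Lemma~\ref{tripartitetiling} (in the tripartite case) to $V_1 \sm U_1$ completes the triangle-tiling, provided $|V_1 \sm U_1| \equiv 0 \pmod 3$; this is arranged by tweaking (by at most $2$) the number of cross-triangles formed when processing the children of $1$, or, failing that, by leaving up to two vertices of $V_1$ uncovered, which is allowed by the conclusion. The principal obstacle is the simultaneous control of two invariants during the recursion: the super-regularity of each pair $(A_i, B_i)$ after removal of $U_i$ and the cross-triangle vertices (which degrades by only $O(\phi)$ per level and remains usable since $\phi \ll d, \omega$), and the mod-$3$ divisibility conditions required by Lemma~\ref{tiling}(b) at each step (which are handled because cross-triangles of different types shift the relevant sums modulo~$3$ in distinct ways, making constant-sized adjustments always sufficient).
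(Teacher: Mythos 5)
Your proposal is correct and follows essentially the same route as the paper: process the clusters leaves-first along the spanning tree, cover each non-root cluster via Lemma~\ref{tiling}(b) applied to a set $S$ of vertices with many neighbours in the parent cluster, absorb the leftover of $S$ with cross-triangles into the parent (using $\alpha(G)\leq\gamma m$ to find adjacent pairs among neighbours), and finish on $V_1$ with Lemma~\ref{tiling}(c) or Lemma~\ref{tripartitetiling}. Your explicit handling of the mod-$3$ hypothesis of Lemma~\ref{tiling}(b) via a constant number of extra cross-triangles is a minor refinement of a point the paper treats more briefly, not a different argument.
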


\begin{proof}
Introduce new constants~$\phi$ and~$\eps'$ with $\eps \ll \phi \ll \eps' \ll d$ and iterate the following process. Pick a leaf of~$T$ other than vertex~$1$, say vertex~$i$, and let~$j$ be the neighbour of~$i$ in~$T$. We will show that there exists a triangle-tiling in $G[V_i \cup V_j]$ that covers every vertex of~$V_i$ and at most~$2 \phi m$ vertices of~$V_j$.
  We then delete the vertices covered by this tiling from~$G$ and delete vertex~$i$ from~$T$. We proceed in this way until only vertex~$1$ of~$T$ remains. We then arbitrarily delete at most two further vertices of~$V_1$ so that the number of remaining vertices in $V_1$ is divisible by three. Since, at this point, we have removed 
  at most $2 \phi m \cdot \Delta(T) + 2 \le 21 \phi m \leq \eps' m/7$ vertices from~$V_1$, by~\ref{tree-a},~\ref{tree-b} and~\ref{tree-e} there exists a bipartition or tripartition of the remaining vertices of~$V_1$ which satisfies the conditions of Lemma~\ref{tiling}(c) or Lemma~\ref{tripartitetiling} respectively (with~$\omega/2$,~$\eps'$ and~$2\gamma$ in place of~$\omega$,~$\eps$ and~$\gamma$ respectively).
In either case there is a perfect triangle-tiling in the graph induced by the remaining vertices of~$V_1$, which together with the deleted triangle-tilings gives a triangle-tiling in~$G$ covering every vertex except for the at most two deleted vertices.

It therefore suffices to show that we can find the desired triangle-tiling in $G[V_i \cup V_j]$ at each step of this process.
To this end, let~$S'$ be the set of vertices of~$V_i$ which have at least~$dm/6$ neighbours in~$V_j$. Observe that previous deletions can have removed at most $2 \phi m \cdot \Delta(T) \leq dm/30$ vertices from each of~$V_i$ and~$V_j$, so by~\ref{tree-d} we have~$|S'| \geq m/6$, and by~\ref{tree-c} the remaining vertices of~$V_i$ can be partitioned into parts~$A_i$ and~$B_i$ with $|A_i|, |B_i| \geq (1/3 + \omega/2)m$ such that $G[A_i, B_i]$ is $(d, \eps')$-super-regular. Without loss of generality we may assume that $|S' \cap A_i| \ge |S' \cap B_i|$, so $|S' \cap A_i| \ge |S'|/2 \ge m/12$ and we can arbitrarily select $S \subseteq S' \cap A_i$ of size $\phi n$.  Now we may use Lemma~\ref{tiling}(b) (again with~$\omega/2$,~$\eps'$ and~$2\gamma$ in place of~$\omega$,~$\eps$ and $\gamma$ respectively) to find a triangle-tiling~$\T$ in~$G[V_i]$ which covers every vertex of~$V_i \sm S$. Since each uncovered vertex has at least $dm/6 \geq 2 \phi m + \gamma m$ neighbours in~$V_j$, we may greedily extend~$\T$ to a triangle-tiling~$\T'$ in~$G$ which covers every vertex of~$V_i$ and which covers at most~$2 \phi m$ vertices of~$V_j$.
\end{proof}

It now suffices to show that for every graph $G$ satisfying the conditions of Theorem~\ref{main}, we can delete triangles and/or vertices from $G$ to obtain a subgraph whose structure meets the conditions of Lemma~\ref{treeargument}. The following lemma shows how to do this under the additional assumption that $G$ has no large sparse cut; this assumption is useful as it allows us to assume that the reduced graph $R$ of $G$ is connected, and so has spanning trees of bounded maximum degree. For this we make the following definition: given a graph $G$ and a partition $\{A, B\}$ of $V(G)$, we say that an edge of $G$ is \emph{crossing} if it has one endvertex in $A$ and one endvertex in $B$.

\begin{lemma}\label{nosparsecut}
For every $\omega, \psi > 0$ there exist $n_0, \gamma > 0$ such that the following statement holds. Let $G$ be a graph on $n \geq n_0$ vertices with $\delta(G) \geq n/3 + \omega n$ and $\alpha(G) \leq \gamma n$. Suppose additionally that for every partition $\{A, B\}$ of $V(G)$ with $|A|, |B| \geq n/3$ there are at least $\psi n^2$ crossing edges of $G$. Then $G$ contains a triangle-tiling covering all but at most two vertices of $G$ (so in particular, if 3 divides $n$ then $G$ contains a perfect triangle-tiling).
\end{lemma}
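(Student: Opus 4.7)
The plan is to apply the Regularity Lemma to~$G$ and use the no sparse cut assumption to reduce the problem to an application of Lemma~\ref{treeargument}. Choose constants $1/n \ll \gamma \ll 1/T \ll \eps \ll d \ll \omega, \psi$ and apply Theorem~\ref{reglem} to~$G$ to obtain a spanning subgraph $G' \subseteq G$, an exceptional set~$V_0$ with $|V_0| \le \eps n$, and clusters $U_1, \dotsc, U_{k_0}$ of common size~$m_0$. Let~$R_0$ be the reduced graph on $[k_0]$ in which $ij$ is an edge iff $G'[U_i, U_j]$ is $(\ge\!d, \eps)$-regular. Standard computations yield $\delta(R_0) \ge (1/3 + \omega/2)k_0$ and $\alpha(R_0) \le 2\gamma k_0$. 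The no sparse cut property transfers to~$R_0$: for any bipartition $\{A, B\}$ of $[k_0]$ with $|A|, |B| \ge k_0/3$, the corresponding bipartition of~$V(G)$ (after distributing $V_0$ arbitrarily) has both parts of size $\ge n/3$, and so admits $\ge \psi n^2$ crossing edges of~$G$. After accounting for the $O((\eps + d)n^2)$ edges lost in passing to~$G'$ and the $O(\eps n^2)$ edges incident to~$V_0$, at least $\Omega(k_0^2)$ edges of~$R_0$ cross $\{A, B\}$; in particular, $R_0$ is connected with no sparse balanced cut.

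The heart of the proof is to decompose $V(G)$ (up to a small exceptional set) into clusters $V_1, \dotsc, V_k$ fitting the framework of Lemma~\ref{treeargument}. For each $i \ge 2$, let $V_i$ be the union of a matched pair $\{U_a, U_b\}$ in~$R_0$ with $A_i := U_a$, $B_i := U_b$, trimmed via Lemma~\ref{makesuperreg} so that $G[A_i, B_i]$ is $(d, 2\eps)$-super-regular. For~$V_1$, we find either (i) a regular pair of clusters of density $\ge 1/2 + 2d$ to use as the root pair, or (ii) a triangle in~$R_0$ whose three edges correspond to super-regular pairs, giving a super-regular triple as root; the dichotomy can be resolved using $\alpha(R_0) \le 2\gamma k_0$ together with the Ramsey-Tur\'an-type argument and the structure forced by $\delta(R_0) \ge (1/3 + \omega/2)k_0$ (noting that an extremal triangle-free graph with such small independence number has bounded order). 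For the matching of the remaining clusters, the no sparse cut condition is essential: it rules out Tutte-Berge obstructions, since any set~$S$ of vertices of~$R_0$ whose removal produces many odd components would itself correspond to a sparse cut of~$G$. Hence~$R_0$ admits a matching covering all but~$O(1)$ vertices; the unmatched clusters are added to an enlarged exceptional set~$V_0^*$ of size at most~$\eps n + O(m_0)$.

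Now form the contracted graph~$R'$ on the~$V_i$'s, placing an edge~$ij$ whenever condition~(d) of Lemma~\ref{treeargument} holds; since each~$V_i$ inherits the regular-pair connections of its constituent~$U_a, U_b$, the graph~$R'$ is connected with $\delta(R') \ge (|R'| - 1)/10$, and Win's Theorem~\ref{spantree} supplies a spanning tree of~$R'$ of maximum degree at most~$10$. Absorb the vertices of~$V_0^*$ into nearby~$V_i$'s one at a time: each $v \in V_0^*$ has $\ge n/3 + \omega n$ neighbors in~$G$, and using $\alpha(G) \le \gamma n$ we may find an edge in $N(v)$ inside some~$V_i$, which (after placing~$v$ into the appropriate side of the bipartition $A_i, B_i$) enlarges $V_i$ by one while preserving super-regularity up to the constants (Lemma~\ref{slicing}). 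Applying Lemma~\ref{treeargument} then produces a triangle-tiling of~$G$ covering all but at most two vertices, as desired. The main obstacle is the matching step: the bound $\delta(R_0) \ge k_0/3 + \omega k_0/2$ is well below the Dirac threshold for a perfect matching, so the no-sparse-cut hypothesis must be invoked essentially (most plausibly via the Tutte-Berge formula) to find a near-perfect matching in~$R_0$; a secondary challenge is establishing the dichotomy for the root, where one must argue that if no sufficiently dense super-regular pair exists then a super-regular triple must.
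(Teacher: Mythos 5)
Your reduction to Lemma~\ref{treeargument} follows the paper's outline, but the step you yourself flag as the main obstacle --- finding a matching in the reduced graph that covers all but $O(1)$ clusters --- is a genuine gap, and the proposed fix via the Tutte--Berge formula cannot work. The no-sparse-cut hypothesis only rules out \emph{sparse} balanced cuts, so it only yields that $R$ is connected; it does nothing to exclude the reduced graph being essentially the complete bipartite graph $K_{k/3,\,2k/3}$ (e.g.\ when $G$ itself is close to a complete bipartite graph between parts of sizes roughly $n/3$ and $2n/3$ with Erd\H{o}s-type graphs inside the parts --- such a $G$ satisfies $\delta(G)\ge n/3+\omega n$, $\alpha(G)=o(n)$ and has \emph{every} balanced cut dense). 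In that case the Tutte--Berge deficiency of $R_0$ is linear in $k$ (delete the small side: the big side falls into $2k/3$ odd components), yet this obstruction corresponds to a cut of $G$ with the maximum possible number of crossing edges, not a sparse one. Discarding the unmatched clusters would then throw away a constant fraction of $V(G)$. The paper's resolution is structurally different: it abandons integer matchings entirely and instead finds a perfect $(\eta,1-\eta)$-weighted fractional matching in the directed reduced graph with $\eta=1/3+\omega/10$ (Lemma~\ref{wfracm}, via Farkas' lemma), which exists under the mere indegree condition $\delta^-\ge\eta k$; each cluster is then randomly sliced according to these weights into pieces of sizes $\eta m$ and $(1-\eta)m$ (Lemma~\ref{random-slicing} is essential here, as the pieces are far smaller than $\eps m'$), and the pieces are paired into \emph{unbalanced} super-regular pairs with part ratio close to $1:2$, which Lemma~\ref{tiling} can still tile completely.

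Two secondary points. First, your claim that $\alpha(R_0)\le 2\gamma k_0$ is false in general: an independent set in the reduced graph corresponds to clusters with no $G'$-edges between them, but $G$ may retain up to $(d+\eps)n$ deleted edges per vertex there, so small independence number does not transfer to $R$. The paper's root dichotomy avoids this: if no pair of clusters has density at least $2/3$, the degree count in the reduced graph improves to $\delta(R)\ge k'/2$ and Mantel's theorem supplies a triangle of regular pairs. Second, absorbing exceptional vertices by inserting them into clusters does not preserve super-regularity (an arbitrary inserted vertex need not have high degree into the opposite part); the paper instead covers each exceptional vertex greedily by its own triangle, spreading the damage so that each $W_i$ loses only an $O(\eps)$-fraction of its vertices.
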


\begin{proof}
Introduce new constants satisfying the following hierarchy:
  \begin{equation*}
    1/n \ll \gamma \ll 1/D \ll 1/T \ll 1/t \ll \eps' \ll \eps \ll d \ll \omega, \psi.
  \end{equation*}
Then we may assume that $n$ and $T$ are large enough to apply Theorem~\ref{reglem} with constants $\eps'/2, d, t$ and $q=1$. 
We also assume without loss of generality that $\omega^{-1}$ is an integer, and define $D' := 30\omega^{-1}(D!)$. 
Let $G$ be as in the statement of the lemma, and apply Theorem~\ref{reglem} to $G$ to obtain a spanning subgraph $G' \subseteq G$, an integer $k'$ with $t \leq k' \leq T$, an exceptional set $U_0$ of size at most $\eps' n/2$ and clusters $U_1, \dots, U_{k'}$ of equal size. We now remove at most $D'$ vertices from each cluster so that the number of remaining vertices in each cluster is divisible by $D'$, and add all removed vertices to the exceptional set $U_0$. Since the total number of vertices moved in this way is at most $D'k' \leq 30\omega^{-1}(D!)T \leq \eps' n/2$, and at most $D' \leq \eps' n/2T \leq \eps'/2 |U_i|$ vertices are removed from each cluster $U_i$, by Lemma~\ref{slicing} the resulting partition of $V(G)$ into $U_0, U_1, \dots, U_{k'}$ has the following properties.
  \begin{enumerate}[label=(\roman*), noitemsep]
    \item \label{pfi} $|U_0| \leq \eps' n$ and $|U_1| = |U_2| = \ldots = |U_{k'}| =: m'$, where $D'$ divides $m'$.
    \item \label{pfii} $d_{G'}(v) \geq d_G(v) - (\eps' + d)n \geq n/3 + 2\omega n/3$ for all $v \in V(G)$. 
    \item \label{pfiii} $e(G'[U_i]) = 0$ for all $i \in [k']$. 
    \item \label{pfiv} for each distinct $i, j \in [k']$ either $G'[U_i, U_j]$ is $(\geq\!\!d, \eps')$-regular or $G'[U_i, U_j]$ is empty.
    \newcounter{save} 
    \setcounter{save}{\value{enumi}}
  \end{enumerate}
In particular~\ref{pfi} implies that $(1-\eps')n/k' \leq m' \leq n/k'$.   
We form the reduced graph $R$ on vertex set $[k']$ in the usual way, that is, with $ij \in E(R)$ if and only if $e(G'[U_i, U_j]) > 0$. For each $i \in [k']$ the number of edges of $G'$ with an endvertex in $U_i$ is at least $m'(n/3+2\omega n/3)$ by~\ref{pfii}. Also, by~\ref{pfiii} there is no edge in $G'[U_i]$, and by~\ref{pfi} there are at most at most $m'\eps' n$ edges in $G'[U_0, U_i]$. Since for each $j \in [k']$ there are at most $(m')^2$ edges in $G'[U_i, U_j]$, it follows that 
\begin{equation}\label{eq:mindeg}
\delta(R) \geq \frac{m'(n/3+2\omega n/3) - m'\eps' n}{(m')^2} 
\geq \left(\frac{1}{3} + \frac{2\omega}{3}-\eps'\right) \frac{n}{m'}
\geq \left(\frac{1}{3} + \frac{\omega}{2}\right)k'.
\end{equation} 
Now consider a partition $\{A_R, B_R\}$ of $[k']$ with $|A_R|, |B_R| \geq \delta(R)$, and define $A := U_0 \cup \bigcup_{i \in A_R} U_i$ and $B := \bigcup_{i \in [k'] \sm B_R} U_i$. Then 
$$|A|, |B| \geq \delta(R) m' \geq \left(\frac{1}{3} + \frac{\omega }{2}\right)k' \cdot \frac{(1-\eps')n}{k'} \geq\frac{n}{3},$$
so by assumption $G$ has at least $\psi n^2$ crossing edges. By~\ref{pfii} at most $(d+\eps')n^2$ edges of $G$ are not in $G'$, and by~\ref{pfi} at most $\eps' n^2$ edges of $G$ intersect $U_0$, so $G'$ contains at least $\psi n^2 - (d+\eps') n^2 - \eps' n^2 > 0$ crossing edges which do not intersect $U_0$. Let $U_i$ and $U_j$ be clusters containing the endvertices of some such edge; then $ij$ is a crossing edge of $R$. In other words, for every partition $\{A_R, B_R\}$ of $[k']$ with $|A_R|, |B_R| \geq \delta(R)$ there is a crossing edge of $R$. Since every connected component of $R$ has size at least $\delta(R)$, it follows that $R$ is connected.
 
We now form a set $V_1$ from which we shall form the `core' set of vertices mentioned in the proof overview at the beginning of Section~\ref{sec-main}. Suppose first that there exist $i, j \in [k']$ with $d(G'[U_i, U_j]) \geq 2/3$. Then $G'[U_i, U_j]$ is $(\geq\!\!3/5, \eps')$-regular by~\ref{pfiv}. In this case we define $V_1 := U_i \cup U_j$, and for convenience of notation later we define $X_1 := U_i$ and $Y_1 := U_j$. 
Now suppose instead that $d(G'[U_i, U_j]) < 2/3$ for every $i, j \in [k']$, that is, that each $G'[U_i, U_j]$ has at most $2(m')^2/3$ edges. Then we have an extra factor of $2/3$ in the denominator of the second term of~\eqref{eq:mindeg}, so we have $\delta(R) \geq k'/2$, and so $R$ contains a triangle $ij\ell$ by Mantel's theorem. 
In this case we take $V_1 := U_i \cup U_j \cup U_\ell$ and set $X_1 := U_i$, $Y_1 := U_j$, and $Z_1 := U_\ell$.
We define an auxiliary graph $R_0$ to be the subgraph of $R$ formed by deleting vertices $i$ and $j$ in the former case, and by deleting vertices $i, j$ and $\ell$ in the latter case. 

Since $\omega^{-1}$ is an integer, we may write 
$\eta := 1/3 + \omega/10$ as a rational number with 
denominator $L := 30 \cdot \omega^{-1}$. Let $\dir{R_0}$ be the directed graph formed from $R_0$ by replacing each edge by a pair of edges, one in each direction. 
Then by Lemma~\ref{wfracm}, we can find a perfect $(\eta, 1-\eta)$-weighted 
fractional matching $w$ in $\dir{R_0}$ in which all weights are rational, and the least common denominator $L'$ of all weights is bounded above by a function of $|V(R_0)|$ and $L$, that is, a function of $k'$ and $\omega$. Since $k' \leq 1/T$ and we assumed that $1/D \ll 1/T, \omega$, we may assume that $L' \leq D$, so $L'$ divides $D!$, and so $D!w_{\dir{ij}}$ is an integer for every $\dir{ij} \in \dir{R_0}$. Define $m := m'/D!$, and observe that that since $D' = D!L$ divides $m$ by~\ref{pfi}, both $m$ and $\eta m$ are integers.

We now partition each cluster not contained in $V_1$ into parts of size $\eta m$ and $(1 - \eta) m$ according to the weights in $w$, using the following probabilistic argument.
For every $i \in V(R_0)$, we select 
a partition $\U_i$ of $U_i$ uniformly at random from all such partitions in which
exactly $\sum_{j \in N^+(i)} D!w_{\dir{ij}}$ sets are of size $\eta m$ and 
exactly $\sum_{j \in N^-(i)} D!w_{\dir{ji}}$ sets are of size $(1 - \eta) m$.
Since $w$ is a perfect fractional $(\eta, 1-\eta)$-weighted matching, by~\eqref{eq:fracmatch} we have 
$$\eta m \sum_{j \in N^+(i)} D!w_{\dir{ij}} + (1-\eta)m \sum_{j \in N^-(i)} D!w_{\dir{ji}} = D!m = m' = |U_i|,$$
so we can indeed partition $U_i$ in this way. We also consider the two or three clusters contained in $V_1$ to be partitioned into a single part. That is, for each $i \in [k'] \sm V(R_0)$ we set $\U_i$ to be the trivial partition $\{U_i\}$ of $U_i$. 
Now consider any edge $ij \in E(R)$, and recall that 
$G'[U_i, U_j]$ is $(\geq \!\! d, \eps')$-regular by~\ref{pfiv}, so 
by Lemma~\ref{random-slicing}\footnote{Note that $m$ is much smaller than $\eps' m'$ (since $D$ is much larger than $1/\eps'$) so we must use the random slicing lemma (Lemma~\ref{random-slicing}) here, as opposed to, say, the standard slicing lemma (Lemma~\ref{slicing}).}, with probability at least $1 - e^{-\Omega(n)}$
we have that $G'[U'_i, U'_j]$ is $(\geq \!\! d, \eps)$-regular 
for every $U'_i \in \U_i$ and 
for every $U'_j \in \U_j$.
Taking a union bound over all of the at most $\binom{k'}{2}$ edges of $R$ we find that with positive probability this property holds for every edge of $R$. Fix a choice of partitions $\U_i$ for $i \in [k']$ for which this is the case.

We now define another auxiliary graph $R_1$ with vertex set $\bigcup_{i \in [k']} \U_i$ in which, for each distinct $i, j \in [k']$, each $X \in \U_i$ and each $Y \in \U_j$, there is an edge $XY$ if and only if $G'[X, Y]$ is $(\geq \!\! d, \eps)$-regular. Observe that by our choice of partitions $\U_i$ the graph $R_1$ is then a blow-up of $R$, formed by replacing each vertex $i \in [k']$ by a set of $|\U_i|$ vertices and replacing each edge $ij \in E(R)$ by a complete bipartite graph between the corresponding sets. In particular, $R_1$ is connected. Also note that for each distinct $i, j \in [k']$ with $ij \notin E(R)$, each $X \in \U_i$ and each $Y \in \U_j$, the graph $G'[X, Y]$ is empty by~\ref{pfiv}. 

Next, for every edge $\dir{ij} \in E(\dir{R_0})$, we define $s_{ij} := D! \cdot w_{\dir{ij}}$. We then label $s_{ij}$ of the sets
in $\U_i$ of size $\eta m$ as $X_{ij}^1, \dots, X_{ij}^{s_{ij}}$ and label $s_{ij}$ 
of the sets in $\U_j$ of size $(1 - \eta)m$ as 
$Y_{ij}^1, \dots, Y_{ij}^{s_{ij}}$. Since $\U_i$ has exactly $\sum_{j \in N^+(i)} s_{{ij}}$ sets of size $\eta m$ and 
exactly $\sum_{j \in N^-(i)} s_{{ji}}$ sets of size $(1 - \eta) m$, we may do this so that for each $i \in [k']$ each set in $\U_i$ is uniquely labelled. We now relabel the sets $X_{ij}^\ell$ and $Y_{ij}^\ell$ for $\dir{ij} \in E(\dir{R_0})$ and $\ell \in s_{ij}$ as $X_2, \dots, X_{k}$ and $Y_2, \dots, Y_{k}$ respectively, where $k -1 := \sum_{\dir{ij} \in E(\dir{R_0})} s_{ij} = D! \sum_{\dir{ij} \in E(\dir{R_0})} w_{ij} = D! |V(R_0)|$ since $w$ is perfect, so $k' \leq k \leq D! k'$. Then for each $2 \leq \ell \leq k$ our choice of partition implies that
$G'[X_\ell, Y_\ell]$ is $(\geq \!\! d, \eps)$-regular; we define $V_\ell := X_\ell \cup Y_\ell$, and observe that $|V_\ell| = m$.

We now define a final auxiliary graph $R^*$ with vertex set $[k]$ in which $ij$ is an edge of $R^*$ if and only if $e(G'[V_i, V_j]) > 0$. Observe that $R^*$ is then a contraction of $R_1$, in which the vertices of $R_1$ corresponding to the sets $X_1$ and $Y_1$ (and $Z_1$ if defined) are contracted to the single vertex $1$ of $R^*$, and for $2 \leq i \leq k$ the vertices of $R_1$ corresponding to $X_i$ and $Y_i$ are contracted to the single vertex $i$ of $R^*$. So, since $R_1$ is connected, $R^*$ is connected also. 
Now suppose that $ij$ is an edge of $R^*$. Since $G'[V_i, V_j]$ is nonempty there must exist sets $S \in \{X_i, Y_i, Z_i\}$ and $T \in \{X_j, Y_j, Z_j\}$ such that $G'[S, T]$ is non-empty (ignore $Z_i$ unless $i=1$ and $Z_1$ exists, and likewise for $Z_j$). We then have $S \in \U_{i'}$ and $T \in \U_{j'}$ for some $i', j' \in [k']$, so $ST$ is an edge of $R_1$, and so $G'[S, T]$ is $(\geq \!\! d, \eps)$-regular. Also, a similar calculation to~\eqref{eq:mindeg} shows that we must have $\delta(R^*) \geq k/3$, so by Theorem~\ref{spantree} there is a spanning tree $T$ in $R^*$ with $\Delta(T) \leq 3$. 

To recap, at this point we have a formed a partition $\{U_0, V_1, \dots, V_k\}$ of $V(G)$ and a graph $R^*$ with vertex set $[k]$ which contains a spanning tree of maximum degree at most~$3$, such that the following statements hold. 

\begin{enumerate}[label=(\roman*), noitemsep]
\setcounter{enumi}{\value{save}}
\item \label{pfv} $V_1$ admits either a partition $\{X_1, Y_1\}$ with $|X_1|= |Y_1| = m'$ such that $G'[X_1, Y_1]$ is $(\geq \!\! 3/5, \eps')$-regular, or a partition $\{X_1, Y_1, Z_1\}$ with $|X_1| = |Y_1| = |Z_1| = m'$ such that $G'[X_1, Y_1]$, $G'[X_1, Z_1]$ and $G'[Y_1, Z_1]$ are each $(\geq \!\!d, \eps')$-regular. 
\item \label{pfvi} For each $2 \leq i \leq k$, we have $|V_i| = m$ and~$V_i$ admits a partition $\{X_i, Y_i\}$ with $|X_i|, |Y_i| \geq \eta m = (1/3+\omega/10)m$ such that $G'[X_i, Y_i]$ is $(\geq \!\!d, \eps)$-regular. 
\item \label{pfvii} If $ij \in E(R^*)$, then there are sets $S \subseteq V_i$ and $T \subseteq V_j$ with $|S| \geq |V_i|/3$ and $|T| \geq |V_j|/3$ such that $G'[S, T]$ is $(\geq \!\!d, \eps)$-regular.
\end{enumerate}

If we are in the first case of~\ref{pfv}, then by Lemma~\ref{makesuperreg} we may choose subsets $A_1 \subseteq X_1$ and $B_1 \subseteq Y_1$ with $|A_1|, |B_1| \geq (1-\eps')m'$ such that $G'([A_1, B_1])$ is $(3/5, 2\eps')$-super-regular, and we then define $W_1 := A_1 \cup B_1$. If we are instead in the second case, by three applications of Lemma~\ref{makesuperreg} we may choose subsets $A_1 \subseteq X_1$, $B_1 \subseteq Y_1$ and $C_1 \subseteq Z_1$ with $|A_1|, |B_1|, |C_1| \geq (1-2\eps')m'$ such that $G'([A_1, B_1])$, $G'([B_1, C_1])$ and $G'([C_1, A_1])$ are each $(d, 3\eps')$-super-regular, and we then define $W_1 := A_1 \cup B_1 \cup C_1$. Next, for each $2 \leq \ell \leq k$, by~\ref{pfvi} and Lemma~\ref{makesuperreg} we may choose subsets $A_\ell \subseteq X_\ell$ and $B_\ell \subseteq Y_\ell$ with $|A_\ell| \geq (1-\eps)|X_\ell|$ and $|B_\ell| \geq (1-\eps)|Y_\ell|$ such that $G'[A_\ell, B_\ell]$ is $(d, 2\eps)$-super-regular, and define $W_\ell := A_\ell \cup B_\ell$. Finally, define $W_0 := U_0 \cup \bigcup_{i \in [k]} V_i \sm W_i$. Then $\{W_0, W_1, \dots, W_k\}$ is a partition of $V(G)$ and, since $|W_i| \geq (1-\eps)|V_i|$ for each $i \in [k]$, we have $|W_0| \leq 2\eps n$. 

Write $W_0 := \{x_1, \dots, x_q\}$, so $q \leq 2\eps n$. To complete the proof we greedily form a triangle-tiling $\T = \{T_1, \dots, T_q\}$ such that $x_i \in T_i$ for each $i \in [q]$ and $|V(\T) \cap W_j| \leq 20 \eps |W_j|$ for each $j \in [k]$. To see that this is possible, suppose that we have already chosen triangles $T_1, \dots, T_{s-1}$ for some $s \in [q]$, let $X := \bigcup_{i \in [s-1]} V(T_i)$ be the set of vertices covered by these triangles, and let the set $X'$ consist of all vertices in sets $W_i$ with $|X \cap W_i| \geq 18 \eps |W_i|$ (that is, from which the previously-chosen triangles cover more than a $18 \eps$-proportion of the vertices). Then we have $18\eps |X'| \leq |X| \leq 3q \leq 6\eps n$, so $|X'| \leq n/3$, and so $x_s$ has at least $\delta(G) - |X| - |X'| - |W_0| \geq \omega n - 10\eps n \geq \omega n/2$ neighbours not in $X$, $X'$ or $W_0$, so (since $\alpha(G) \leq \gamma n < \omega n/2$) two of these neighbours must be adjacent, giving the desired triangle $T_s$ containing $x_s$. Having chosen $T_s$ in this way for every $s \in [q]$ to obtain $\T$, observe that since we chose each $T_s$ to avoid every set $W_i$ from which at least $18 \eps |W_i|$ vertices were covered by previously-chosen triangles, we must have $|V(\T) \cap W_i| \leq 20 \eps |W_i|$ for each $i \in [k]$, as desired.

Finally, for each $i \in [k]$ define $A_i' := A_i \sm V(\T)$, $B_i' := B_i \sm V(\T)$, $V'_i := W_i \sm V(\T)$. Also define $V' := V(G) \sm V(\T)$ and $H := G[V']$. We claim that the graphs $H$ and $R^*$ and the partition $\{V'_1 \dots, V'_k\}$ of $V(H)$ meet the properties of Lemma~\ref{treeargument} with $\eps^* := 200\eps$, $\omega' := \omega/20$ and $\gamma' := 2\gamma k'(D!)$ in place of $\eps$, $\omega$ and $\gamma$ respectively and with $m, d$ and $k$ playing the same role there as here. Indeed, our constant hierarchy allows us to assume that $1/m \ll \gamma' \ll 1/k \ll \eps^* \ll d \ll \omega'$, as required. Also observe that for each $i \in [k]$ we have $|V'_i| \geq |V_i| - 20 \eps |V_i| - \eps |V_i| = (1-21\eps)|V_i|$, so certainly $|V'_i| \geq (1-\eps^*)m$ for each $i \in [k]$. So Lemma~\ref{treeargument}\ref{tree-a} holds, and Lemma~\ref{treeargument}\ref{tree-b} and~\ref{tree-c} follow immediately from our choice of sets $A_\ell$ and $B_\ell$ (and possible $C_1$). Also, for each $ij \in E(R^*)$ by~\ref{pfvii} there exist sets $S \subseteq V'_i$ and $T \subseteq V'_j$ with $|S| \geq |V'_i|/4$ and $|T| \geq |V_j'|/4$ such that $G'[S, T]$ is $(\geq \!\!d, 2\eps)$-regular, which implies that at least $(1-2\eps)|S| \geq m/5$ vertices in $S$ have at least $(d-2\eps)|T| \geq dm/5$ neighbours in $T$, so Lemma~\ref{treeargument}\ref{tree-d} holds. Last of all, Lemma~\ref{treeargument}(e) holds since $\alpha(H) \leq \alpha(G) \leq \gamma n \leq \gamma (2k'm')= \gamma' m$. So we may apply Lemma~\ref{treeargument} to obtain a triangle-tiling covering all but at most two vertices of $H$; together with $\T$ this yields a triangle-tiling in $G$ covering all but at most two vertices.
\end{proof}

Finally, to complete the proof of Theorem~\ref{main} it remains only to consider graphs $G$ which admit a large sparse cut. In this case we show that can remove a small number of vertices to obtain two vertex-disjoint subgraphs $G_A$ and $G_B$ of $G$ whose vertex sets partition $V(G)$ and each of which satisfies a stronger minimum degree condition. We then apply Theorem~\ref{BMSthm} to obtain a perfect triangle-tiling in each of $G_A$ and $G_B$ (alternatively, one could note that the stronger minimum degree conditions preclude either $G_A$ or $G_B$ from having a large sparse cut and apply Lemma~\ref{nosparsecut}).

\begin{proof}[Proof of Theorem~\ref{main}]
Fix $\omega > 0$ and choose $n_0$ sufficiently large and $\gamma$ sufficiently small for Lemma~\ref{nosparsecut} with $\omega^2/40$ in place of $\psi$ and also so that we can apply Theorem~\ref{BMSthm} with $\omega/2$, $n_0/3$ and $3\gamma$ in place of $\omega$, $n_0$ and $\gamma$ respectively. Now let $G$ be a graph on $n \geq n_0$ vertices with $\delta(G) \geq n/3 + \omega n$ and $\alpha(G) \leq \gamma n$. If for every partition $\{A, B\}$ of $V(G)$ with $|A|, |B| \geq n/3$ there are at least $\omega^2 n^2/40$ crossing edges of $G$, then $G$ contains a triangle-tiling covering all but at most two vertices by Lemma~\ref{nosparsecut}, so we are done. So we may assume that some partition $\{A, B\}$ of $V(G)$ with $|A|, |B| \geq n/3$ has fewer than $\omega^2 n^2/40$ crossing edges. Fix such a partition with the smallest number of crossing edges. Note that we cannot have $|A| \leq n/3+1$, as then there would be at least $|A| (\delta(G) - n/3 -1) \geq (n/3) \cdot (\omega n - 1) \geq \omega n^2/4$ crossing edges. It follows that every vertex $x \in A$ lies in at most $\deg(x)/2$ crossing edges, as otherwise moving $a$ from $A$ to $B$ would yield a partition of $V(G)$ with parts of size at least $n/3$ and with fewer crossing edges. So we must have $\delta(G[A]) \geq \delta(G)/2 \geq n/6 + \omega n/2$, and the same argument with $B$ in place of $A$ shows that $\delta(G[B]) \geq n/6 + \omega n/2$.

Our proof now diverges according to whether we are proving conclusion~(a) or conclusion~(b) of Theorem~\ref{main}. For conclusion~(a) we simply choose arbitrarily a set $S$ of at most four vertices of $G$ so that $|A \sm S|$ and $|B \sm S|$ are each divisible by 3. For conclusion~(b) we instead use our additional assumptions that $G$ has no divisibility barrier and that 3 divides $n$. Indeed, the latter implies that we must have one of the following three cases:
\begin{enumerate}[label=(\alph*), noitemsep]
\item $|A| \equiv |B| \equiv 0 \pmod 3$. In this case we take $S = \emptyset$.
\item $|A| \equiv 1 \pmod 3$ and $|B| \equiv 2 \pmod 3$. Since $(A, B)$ is not a divisibility barrier, either $G$ contains an $B$-triangle or a pair of vertex-disjoint $A$-triangles, and we take $S$ to be the vertices covered by some such triangle or pair of triangles.
\item $|A| \equiv 2 \pmod 3$ and $|B| \equiv 1 \pmod 3$. Since $(B, A)$ is not a divisibility barrier, either $G$ contains an $A$-triangle or a pair of vertex-disjoint $B$-triangles, and we take $S$ to be the vertices covered by some such triangle or pair of triangles.
\end{enumerate}
Observe that in all cases we have $|S| \leq 6$ and that both $|A \sm S|$ and $|B \sm S|$ are divisible by $3$. The remaining part of the proof is the same for both cases.

Let $X_A \subseteq A$ consist of all vertices of $A$ with $\deg_{G[A]}(x) < n/3 + \omega n/2$. 
Then each vertex of $X_A$ is contained in more than $\omega n/2$ crossing edges, and since there are at most $\omega^2 n^2/40$ crossing edges in total, each with one vertex in $A$, it follows that
$|X_A| \leq  \omega n/20$. Since $\alpha(G) \leq \gamma n$ and $\delta(G[A]) \geq n/6 \geq 2|X_A| + |S| + \gamma n$ we may greedily form a triangle-tiling $\T_A$ of size at most $|X_A|$ in $G[A]$ which covers every vertex of $X_A$ but which does not intersect $S$. 
We then define $A' := A \sm (V(\T_A) \cup S)$, $G_A := G[A']$ and $n_A := |A'|$. Then $\delta(G_A) \geq  n/3 + \omega n/2 - |V(\T_A)| - |S| \geq n/3 + \omega n/3$, so $n/3 + \omega n/3 \leq n_A \leq 2n/3$. It follows that $G_A$ is a graph on $n_A$ vertices with $\delta(G_A) \geq n_A/2 + \omega n_A/2$ and $\alpha(G_A) \leq \gamma n \leq 3\gamma n_A$. Also $n_A$ is divisible by 3 (since 3 divides each of $|A \sm S|$ and $|V(\T_A)|$), so $G_A$ contains a perfect triangle-tiling $\T_A'$ by Theorem~\ref{BMSthm}.

By exactly the same argument with $B$ in place of $A$ we obtain a triangle-tiling $\T_B$ in $G[B]$ and a graph $G_B$ on vertex set $B' := B \sm (V(\T_B) \cup S)$ which contains a perfect triangle-tiling $\T_B'$. Finally, for conclusion~(a) observe that $\T := \T_A \cup \T_B \cup \T_A' \cup \T_B'$ is then a triangle-tiling in $G$ covering all vertices outside $S$, that is, all but at most four vertices of $G$, and for conclusion~(b) note that adding the triangle or triangles covering $S$ to $\T$ gives a perfect triangle-tiling in $G$.
\end{proof}

\section{Constructions and questions} \label{sec:examples}

Many of the ideas of this section are due to Balogh, Molla and Sharifzadeh~\cite{BMS}, but we include them here for completeness.
In the following constructions, we call an $n$-vertex 
triangle-free graph with sublinear independence number and minimum degree
$\omega(1)$ an \emph{Erd\H{o}s graph}, which we denote by $\ER(n)$.\footnote{
Strictly speaking, for any fixed $n$, both $\ER(n)$ and $\BE(n)$ (defined shortly) are families of graphs.}

To show that the minimum degree conditions of Theorem~\ref{main} and Corollary~\ref{krfree} are asymptotically tight, we
use the Erd\H{o}s graph to form a graph $G$ on $n$ vertices
with a space barrier, $\delta(G) \ge n/3 + \omega(1)$ and $\alpha(G) = o(n)$.
We form $G$ by taking the complete bipartite graph whose vertex classes $U$ and $V$ have sizes $2n/3 + 1$ and $n/3 - 1$ respectively,
and then placing copies of $\ER(|U|)$ and $\ER(|V|)$
on $U$ and $V$ respectively.
The graph $G$ formed in this way has $\delta(G) \geq n/3 + \omega(1)$ and sublinear independence number. Furthermore, $G$ is $K_5$-free since $G[U]$ and $G[V]$ are each triangle-free and, because $U$ is a space barrier, $G$ has no perfect triangle-tiling.

The previous example can be modified slightly to give lower bounds for the following question.

\begin{question}\label{q-kge4}
Let $k \geq 4$ and let $G$ be an $n$-vertex graph with $\alpha(G)=o(n)$. 
What is the best-possible minimum degree condition on $G$ that guarantees a perfect $K_k$-tiling in $G$?
\end{question}

The construction is slightly different depending on the parity of $k \ge 4$.
We start with the odd case, so let $k=2(\ell-1)+1$ for some integer $\ell \ge 3$. 
Consider the complete $\ell$-partite graph with one part $V_1$ of size $n/k-1$, another part $V_2$ of size $2n/k+1$ and 
the remaining parts $V_3, \dotsc, V_{\ell}$ each of size $2n/k$, 
and place the Erd\H{o}s graph $\ER(|V_i|)$ on each of the parts $V_i$.
When $k = 2\ell$ for some integer $\ell \ge 1$, 
the construction is essentially the same but we have one part of size $2n/k + 1$, one part of size $2n/k - 1$
and the remaining parts are each of size $2n/k$. In either case we obtain a graph $G$ with $\delta(G) \ge \left(1 - \frac{2}{k}\right)n + \omega(1)$, sublinear independence number and no $K_k$-factor.
It is worth noting that in the odd case the graph $G$ is $K_{k+2}$-free and in the even case $G$ contains
no $K_{k+1}$.

We feel that the following is another interesting related question.

\begin{question}\label{q-k4free}
Let $G$ be an $n$-vertex $K_4$-free graph with $\alpha(G) = o(n)$. 
What is the best-possible minimum degree condition on $G$ that guarantees a perfect triangle-tiling in~$G$? 
\end{question}

We use a modified version of the 
Bollob\'as-Erd\H{o}s graph~\cite{BE} to construct a $K_4$-free graph without a perfect triangle-tiling and with high minimum degree.
For every large even $n$, 
the Bollob\'as-Erd\H{o}s graph is an 
$n$-vertex, $K_4$-free graph with sublinear independence number, which we denote by $\BE(n)$.
The vertex set of $\BE(n)$ is the disjoint union of two sets $V_1$ and $V_2$ of the 
same order such that the 
graphs $G[V_1]$ and $G[V_2]$ are triangle-free and
every vertex in $V_1$ has at least $(1/4 - o(1))n$ neighbors in $V_2$
and every vertex in $V_2$ has at least $(1/4 - o(1))n)$ neighbors in $V_1$.
To construct our example, start with $\BE(4n/3 + 2)$
and then remove a randomly selected subset of size 
$n/3 + 2$ from one of the two parts.
Note that the two parts now have sizes
$n/3 - 1$ and $2n/3 + 1$, the resulting graph clearly is $K_4$-free and 
since the larger part is a space barrier, it has no perfect triangle-factor.
Furthermore, with high probability, the minimum degree is $(1/6 - o(1))n$.
We conjecture that $(1/6 + o(1))n$ is the proper minimum degree condition.

\begin{conjecture}\label{conj}
For every $\omega > 0$ there exist $\gamma, n_0 > 0$ such that every $K_4$-free graph on $n \geq n_0$ vertices with $\delta(G) \geq n/6 + \omega n$ and $\alpha(G) \leq \gamma n$ contains a perfect triangle-tiling.
\end{conjecture}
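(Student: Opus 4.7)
The plan is to follow the framework of the proof of Theorem~\ref{main} --- regularity, core structure, and absorption --- but to replace the super-regular core structures of Lemmas~\ref{tiling} and~\ref{tripartitetiling} by cores of Bollob\'as--Erd\H{o}s type, since the conjectured extremal graph is a modified copy of $\BE(n)$. Under the conjecture's hypotheses the typical pair of clusters in the reduced graph has density well below $1/2$, so the existing core lemmas do not apply directly; the substance of the proof is to develop analogous tiling results for this sparser regime.

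First, I would apply Theorem~\ref{reglem} to $G$ to obtain a reduced graph $R$ of minimum degree at least $(1/6 + \omega/2)|V(R)|$. Since $G$ is $K_4$-free with $\alpha(G) \le \gamma n$, Theorem~\ref{rt-thm}(a) with $r = 4$ (so $f_{\RT}(4) = 1/4$) implies that every induced subgraph of $G$ on a vertex set $X$ carrying more than $(1/4 + o(1))\binom{|X|}{2}$ edges satisfies $|X| = o(n)$. In particular, for each $v \in V(G)$, the graph $G[N(v)]$ is triangle-free with small independence number, and by Theorem~\ref{rt-thm}(a) with $r = 3$ satisfies $e(G[N(v)]) = o(|N(v)|^2)$. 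These restrictions should allow one to set up a structural dichotomy: either $V(G)$ admits a sparse cut of size $o(n^2)$, which would need to be exploited recursively (noting that, unlike in the proof of Theorem~\ref{main}, a direct application of Theorem~\ref{BMSthm} is insufficient when one side of the cut approaches $2n/3$), or the reduced graph $R$ is connected and we look for a global tiling structure.

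In the non-sparse case, the crux would be a new \emph{bipartite tiling lemma} replacing Lemma~\ref{tripartitetiling}: if $V_1, V_2$ are disjoint balanced clusters such that $G[V_i]$ is triangle-free with small independence number for each $i \in \{1, 2\}$, and $G[V_1, V_2]$ is super-regular of density at least $1/4 - o(1)$, then subject to divisibility and the absence of a space barrier there is a perfect triangle-tiling in $G[V_1 \cup V_2]$. Triangles in such a configuration are necessarily of two types --- $V_i$-triangles formed by an edge inside $G[V_i]$ together with a common neighbour in $V_{3-i}$ --- and one would balance their populations via a weighted fractional matching (Lemma~\ref{wfracm} with appropriate weights reflecting the $2{:}1$ vertex ratio) in an auxiliary graph whose vertices are the edges of $G[V_1]$ and $G[V_2]$, before combining this with a variant of the absorption lemma (Lemma~\ref{absorber}) to deal with the leftover vertices.

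The main obstacle is precisely this bipartite tiling lemma. Because $G[V_i]$ is triangle-free with sublinear independence number, it has only $o(n^2)$ edges, so the greedy triangle-finding arguments used in Lemma~\ref{tiling}(a) cannot be carried out inside a super-regular pair lying within $V_i$. Constructing triangles instead requires a detailed understanding of where the edges of $G[V_i]$ sit --- essentially a stability theorem for Ramsey--Tur\'an extremal graphs pinning down the structure of $G[V_i]$ --- so that one can locate edges whose endpoints share many common neighbours across the bipartite part. Obtaining this stability, and then designing absorbers compatible with the resulting $\BE$-like structure, seems to be the genuinely new ingredient needed beyond the methods of this paper; once it is in hand, the rest of the argument should parallel Section~\ref{sec-main}.
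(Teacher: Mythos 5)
The statement you are addressing is Conjecture~\ref{conj}: the paper does not prove it, and it remains open. The most the authors establish (and only in sketch form, at the end of Section~\ref{sec:examples}) is that under the conjecture's hypotheses one can cover all but at most $\omega n$ vertices, via the Enomoto--Kaneko--Tuza $P_2$-tiling of the triangle-free reduced graph; they explicitly do not claim a perfect tiling. Your proposal is likewise not a proof but a research plan, and you say so yourself: the ``bipartite tiling lemma'' you identify as the crux is exactly the missing ingredient, and you leave it unproved, deferring it to an unstated stability theorem for Ramsey--Tur\'an extremal graphs. Since the entire difficulty of the conjecture is concentrated in that lemma, the proposal has a genuine and acknowledged gap rather than a complete argument.

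To make the gap concrete: both core mechanisms of Section~\ref{sec-main} fail in the regime you describe. The greedy step of Lemma~\ref{tiling}(a) needs a vertex of one cluster with $\Omega(n)$ neighbours spanning an edge in the other cluster, which is unavailable when each $G[V_i]$ is triangle-free with only $o(n^2)$ edges distributed with no a priori control. More seriously, the absorption step (Lemma~\ref{absorber} applied as in Lemma~\ref{tiling}(c)) requires the auxiliary bipartite graph between vertices of $A$ and matching edges of $G[B]$ to have density bounded below, which is derived there from the pair having density greater than $1/2$; in a Bollob\'as--Erd\H{o}s-type core the cross-density is only about $1/4$, so two adjacent vertices of $V_1$ may have essentially disjoint neighbourhoods in $V_2$ and the auxiliary graph may have density $o(1)$. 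Your proposed fix --- a weighted fractional matching on an auxiliary graph whose vertices are the edges of $G[V_1]$ and $G[V_2]$ --- also lacks the degree hypothesis that Lemma~\ref{wfracm} needs, since $\delta(G)$ gives no lower bound on how many triangles a given internal edge extends to. Until these points are resolved, the conjecture should be treated as open, as the paper presents it.
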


Using methods similar to those used in our proof of Theorem~\ref{main} we can show that every graph $G$ which satisfies the conditions of Conjecture~\ref{conj} has a triangle-tiling covering almost all of the vertices of $G$. More precisely, we can show that for $1/n \ll \gamma \ll \omega$, if $G$ is a 
$K_4$-free graph on $n$ vertices with $\delta(G) \ge (1/6 + \omega)n$  and $\alpha(G) \le \gamma n$, 
then $G$ contains a triangle-tiling which covers all but at most $\omega n$ vertices. What follows is a brief sketch of the argument.

Apply Theorem~\ref{reglem} with 
$\gamma \ll \eps \ll d \ll \omega$ to obtain a spanning subgraph $G' \subseteq G$, an exceptional set $V_0$ and clusters $V_1, \dots, V_k$ of equal size $m$.  Define the corresponding reduced graph $R$ on vertex set $[k]$ in the usual way. 
The fact that $G$ is $K_4$-free implies 
the following two important facts about these clusters and the graph $R$. 
(These facts were first observed by Szemer\'edi in~\cite{Sz}.) 
\begin{enumerate}[label=(\alph*), noitemsep]
  \item there is no pair $i, j \in [k]$ for which $G'[V_i, V_j]$ is $(1/2+d, \eps)$-regular, and 
  \item $R$ is triangle-free.
\end{enumerate}
Using a standard argument, it is not hard to see that~(a) and the fact that $\delta(G) \ge (1/6 + \omega)n$ together imply that $\delta(R) \ge k/3$.
So $R$ must be connected, as otherwise Mantel's theorem would give a triangle in the smallest connected component of $R$, contradicting~(b).
By a result of Enomoto, Kaneko and Tuza~\cite{EKT}, the fact that $R$ is a connected graph on $k$ vertices with $\delta(R) \ge k/3$
implies that $R$ contains $\floor{|R|/3}$ vertex-disjoint copies of $P_2$ (the path on three vertices).
In a manner similar to the proof of Lemma~\ref{tiling}, for each such path $ijk$ we can use the fact that $\alpha(G) \le \gamma n$ 
to greedily construct a triangle-tiling covering all but at most $3.1 \eps m$ of the vertices of $G[V_i \cup V_j \cup V_k]$, where each triangle has one vertex in $V_j$, the central cluster in the path,
and the other two vertices either both in $V_i$ or both in $V_k$.
The union of these $\floor{|R|/3}$ triangle-tilings is then a triangle-tiling in $G$ which covers all but at most $\omega n$ vertices.

\medskip
We can generalize Question~\ref{q-k4free} in the following way.
\begin{question}
Let $k \geq 3$ and let $G$ be an $n$-vertex $K_{k+1}$-free graph with $\alpha(G)=o(n)$. 
What is the best-possible minimum degree condition on $G$ that guarantees a perfect $K_k$-tiling in $G$? 
\end{question}

When $k$ is even, we have previously shown 
that the minimum degree must be at least $\left(\frac{k-2}{k} + o(1)\right)n$.
When $k = 2\ell + 1 \ge 5$, 
we form $G$ by starting with the complete $\ell$-partite graph
that has
one part $V_1$ of size $3n/k + 1$,
one part $V_2$ of size $2n/k - 1$, and 
the remaining parts, $V_3,\dotsc,V_{\ell}$, each of size $2n/k$.
In $V_1$, we place $\BE(|V_1|)$ on $V_1$, 
and, for every $2 \le i \le \ell$, 
we place a copy of $\ER(|V_i|)$ on $V_i$.
We then have $\delta(G) \geq \left(\frac{k-3}{k} + \frac{1}{4} \cdot \frac{3}{k} - o(1)\right)n =  
\left(\frac{4k-9}{4k} - o(1)\right)n$.
Furthermore, $G$ has sublinear independence number, is $K_{k+1}$-free, and 
has no perfect $K_{k}$-tiling, because each copy of $K_k$ in $G$ has at most $3$ vertices in $V_1$.

\medskip

Finally, for $r \ge 3$, $\omega, \gamma > 0$ and sufficiently large $n$, we 
give the construction of 
$G := G_{\text{RT}}(n, r, \omega, \gamma)$ from Theorem~\ref{rt-thm}(b).
For odd $r$ the construction was first given in~\cite{ES} and for even $r$ the construction is from~\cite{EHSS}.
We say that a partition $V_1, \dotsc, V_\ell$ of the vertices of a graph is \textit{equitable} if 
$||V_i| - |V_j|| \le 1$ for all $1 \le i < j \le \ell$.

When $r = 2\ell + 1$ is odd, we let $V_1, \dotsc, V_\ell$ be an equitable partition of $V(G)$ and form the complete $\ell$-partite graph with vertex classes $V_1, \dots, V_\ell$.
For every $i \in [\ell]$, we then place a copy of $\ER(|V_i|)$ on $V_i$, so
$$\delta(G) \ge n - \ceiling{\frac{n}{\ell}} \ge \left(\frac{r - 3}{r-1} - \omega\right)n.$$
We can assume that $n$ is large enough so that for each $i \in [\ell]$
the independence number of $G[V_i]$ is at most $\gamma n$, which implies that 
$\alpha(G) \le \gamma n$. Note that $G$ is $K_r$-free, as $G[V_i]$ is $K_3$-free for $i \in [\ell]$.

When $r = 2\ell$ is even, we let $U_1, \dotsc, U_{3\ell-2}$ be a equitable partition of $V(G)$,
so $|U_i| \in \left\{\floor{\frac{2n}{3r - 4}}, \ceiling{\frac{2n}{3r - 4}}\right\}$ for every $i \in [3\ell - 2]$.
Let 
$$V_1 := U_1 \cup U_2 \cup U_3 \cup U_4 \qquad \text{ and } \qquad V_i := U_{3i - 1} \cup U_{3i} \cup U_{3i + 1}
\quad \text{ for } 2 \le i \le \ell-1,$$
and form the complete $(\ell-1)$-partite graph with vertex classes $V_1, \dots, V_{\ell-1}$.
On $V_1$, we then place a copy of $\BE(|V_1|)$ and assume $n$ is large enough 
so that $G[V_1]$ has minimum degree at least 
\begin{equation*}
  \left(\frac{1}{4} - \omega\right)|V_1| \ge |V_1| - \left(\frac{6}{3r - 4} + \omega\right)n
\end{equation*}
and independence number at most $\gamma n$.
For every $2 \le i \le \ell-1$, we place a copy of $\ER(|V_i|)$ on $V_i$ and we ensure that $n$ is large enough so that
the independence number of $G[V_i]$ is at most $\gamma n$.
Because every vertex in $G$ is adjacent to all but at most 
$\left(\frac{6}{3r - 4} + \omega\right)n$ vertices of $G$, we have that 
\begin{equation*}
  \delta(G) \ge \left(\frac{3r - 10}{3r - 4} - \omega\right)n.
\end{equation*}
Furthermore, $\alpha(G) \le \gamma n$ and $G$ is $K_r$-free as $G[V_1]$ is $K_4$-free and each of 
the subgraphs $G[V_2], \dotsc, G[V_{\ell-1}]$ is $K_3$-free.

\section{Appendix} \label{appendix}

The purpose of this appendix is to prove Lemma~\ref{random-slicing}. 
The lemma is essentially a corollary to the following two theorems
of Kohayakawa and R\"odl~\cite{KR}. For this we use the following notation: let $G$ be a bipartite
graph with vertex classes $A$ and $B$, and define $d := d(G[A, B])$. Then for any $\eps$ we define $D_{AB}(\eps)$ to be the graph with vertex set $A$ in which $x,x' \in A$ are adjacent if and only if 
  \begin{equation*}
    |N_G(x)|, |N_G(x')| > (d - \eps)|B| \qquad \text{and} \qquad |N_G(x) \cap N_G(x')| < (d + \eps)^2|B|. 
  \end{equation*}

\begin{theorem}[{\cite[Theorem 45]{KR}}]\label{kr-sufficient}
  Let $0 < \eps < 1$,
  and let $G[A,B]$ be a bipartite graph with $|A| \ge 2/\eps$. 
  If $e(D_{AB}(\eps)) > (1 - 5\eps)|A|^2/2$, then $G[A,B]$ is $(d, (16 \eps)^{1/5})$-regular, where $d := d(G[A, B])$.
\end{theorem}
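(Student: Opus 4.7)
The plan is to argue by contrapositive: I will assume that $G[A,B]$ fails to be $(d, \eps')$-regular with $\eps' := (16\eps)^{1/5}$, and show this forces $e(D_{AB}(\eps)) \leq (1 - 5\eps)|A|^2/2$. Non-regularity supplies witness sets $X \subseteq A$ and $Y \subseteq B$ with $|X| \geq \eps'|A|$, $|Y| \geq \eps' |B|$ and $|d_G(X, Y) - d| \geq \eps'$. The strategy combines a defect Cauchy-Schwarz on the $B$-side degree sequence (to produce a lower bound on the total codegree within $X$) with the codegree upper bound implicit in the hypothesis, then uses $|X|^2 \geq (\eps')^2 |A|^2$ to balance the two.

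First I would apply defect Cauchy-Schwarz to the sequence $a_y := \deg_X(y) - d|X|$ for $y \in B$. The density deviation gives $|\sum_{y \in Y} a_y| = |e_G(X, Y) - d|X||Y|| \geq \eps' |X||Y|$, hence $\sum_{y \in B} a_y^2 \geq \sum_{y \in Y} a_y^2 \geq (\eps')^2 |X|^2 |Y| \geq (\eps')^3 |X|^2 |B|$ by Cauchy-Schwarz in $y$. Expanding the square and using the identity $\sum_y \deg_X(y)^2 = e_G(X, B) + 2 P$, where $P := \sum_{\{x, x'\} \subseteq X} |N(x) \cap N(x')|$, converts this into a lower bound
\begin{equation*}
2P \geq (\eps')^3 |X|^2 |B| + 2d|X| \cdot e_G(X, B) - d^2 |X|^2 |B| - e_G(X, B).
\end{equation*}

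Second I would use the hypothesis to bound $P$ from above. Let $A^- := \{x \in A : \deg_G(x) \leq (d - \eps) |B|\}$; every pair containing a vertex of $A^-$ is automatically a non-edge of $D_{AB}(\eps)$, so $|A^-|(|A| - |A^-|) < 5\eps |A|^2 /2$, forcing $|A^-| < 5\eps |A|$ for $\eps < 1/10$. Substituting the resulting bound $e_G(X, B) \geq (d - \eps)(|X| - 5\eps |A|) |B|$ into the inequality on $2P$ and simplifying with $|X| \geq \eps' |A|$ and $|A| \geq 2/\eps$ yields $2P/(|X|^2 |B|) \geq d^2 + (\eps')^3 - O(\eps/\eps')$. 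Meanwhile, each edge of $D_{AB}(\eps)$ within $\binom{X}{2}$ contributes codegree at most $(d+\eps)^2 |B|$, while each of the fewer than $5\eps|A|^2/2$ non-edge pairs in $\binom{X}{2}$ contributes at most $|B|$; this gives $2P/(|X|^2 |B|) \leq (d+\eps)^2 + 5\eps / (\eps')^2$.

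Combining these two bounds and multiplying through by $(\eps')^2$ yields
\begin{equation*}
(\eps')^5 \leq (\eps')^2 \bigl( (d+\eps)^2 - d^2 \bigr) + 5\eps + O(\eps \eps') \leq 16\eps
\end{equation*}
for $\eps$ sufficiently small, contradicting the definition $\eps' = (16\eps)^{1/5}$. The main obstacle I anticipate is the careful bookkeeping of lower-order terms: one must verify that the $O(\eps/\eps')$ slack in the lower bound on $2P$ (which arises from the $A^-$ correction and from the trivial estimate $e_G(X, B) \leq |X||B|$ when bounding $\sum_y \deg_X(y)^2$) is genuinely dominated by the $5\eps/(\eps')^2$ term in the upper bound. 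This is exactly where the degree threshold $\deg > (d-\eps)|B|$ built into the definition of $D_{AB}(\eps)$ earns its keep: without that clause, $|A^-|$ could be linear in $|A|$ and $e_G(X, B)$ could deviate from $d|X||B|$ by enough to invalidate the defect lower bound, and the whole balance between $(\eps')^3$ and $5\eps/(\eps')^2$ would collapse.
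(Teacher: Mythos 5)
A preliminary remark: the paper never proves this statement --- it is quoted as Theorem~45 of Kohayakawa and R\"odl~\cite{KR} and used as a black box in the Appendix --- so there is no internal proof to compare yours against. Your defect Cauchy--Schwarz/codegree argument is the standard route to ``pair condition implies regularity'' statements and is in the spirit of the cited source. The skeleton is sound: the identity $\sum_{y \in B}\deg_X(y)^2 = e_G(X,B) + 2P$, the lower bound $\sum_{y\in B}(\deg_X(y)-d|X|)^2 > (\eps')^3|X|^2|B|$ extracted from a witness pair for the failure of $(d,\eps')$-regularity, and the upper bound on $P$ obtained by splitting $\binom{X}{2}$ into $D_{AB}(\eps)$-edges (codegree $<(d+\eps)^2|B|$) and the at most $5\eps|A|^2/2$ non-edge pairs (codegree $\le |B|$) are all correct, and they do combine to give the theorem.

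Two concrete points need repair before this is a complete proof. First, from $|A^-|(|A|-|A^-|) < 5\eps|A|^2/2$ alone you cannot deduce $|A^-| < 5\eps|A|$: the quadratic has a second branch in which $|A^-|$ is close to $|A|$, and you must exclude it. The clean fix is to note that every vertex of $A^-$ is isolated in $D_{AB}(\eps)$, so $e(D_{AB}(\eps)) \le \binom{|A|-|A^-|}{2}$, which with the hypothesis forces $|A|-|A^-| > \sqrt{1-5\eps}\,|A|$, i.e.\ $|A^-| < (1-\sqrt{1-5\eps})|A| = \tfrac{5\eps}{1+\sqrt{1-5\eps}}|A|$. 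Second, your conclusion is only claimed ``for $\eps$ sufficiently small'', while the theorem asserts it for all $0<\eps<1$. For $\eps \ge 1/16$ the statement is trivial, since then $(16\eps)^{1/5}\ge 1$ and the regularity requirement is vacuous; but on the remaining range $\eps<1/16$ the parameter $\eps'=(16\eps)^{1/5}$ can be as large as $1$, so the $O(\cdot)$ placeholders must be replaced by explicit constants and checked there. This does go through: after multiplying by $(\eps')^2$ the error is at most $5\eps$ (non-edges), plus roughly $4\eps(\eps')^2$ (the $d\pm\eps$ density terms), plus $2c_1\eps\eps'$ with $c_1 = \tfrac{5}{1+\sqrt{1-5\eps}}$ (degree-deficient vertices), plus $\eps\eps'/2$ (from $1/|X|\le \eps/(2\eps')$, which is where $|A|\ge 2/\eps$ is used), and this total stays below $16\eps$ even at $\eps'=1$; the degenerate cases $d<\eps$ or $2d|X|<1$ are easy separately. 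Note, however, that this calculation genuinely requires the sharper bound on $|A^-|$ above: with the cruder $|A^-|<5\eps|A|$ the accumulated error exceeds $16\eps$ near $\eps=1/16$ and the contradiction is lost on that subrange. With these two repairs your argument is complete.
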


\begin{theorem}[{\cite[Theorem 46]{KR}}]\label{kr-necessary}
  Let $0 < \eps < 1$,
  and let $G[A, B]$ be a bipartite graph with $|B| \ge 1/d$, where $d := d(G[A,B])$. 
  If $G[A,B]$ is $(d, \eps)$-regular, then $e(D_{AB}(\eps)) \ge (1 - 8\eps)|A|^2/2$.
\end{theorem}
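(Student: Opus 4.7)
The plan is to bound the number of non-edges of $D_{AB}(\eps)$ by at most roughly $2\eps |A|^2$, which yields $e(D_{AB}(\eps)) \ge \binom{|A|}{2} - 2\eps|A|^2 \ge (1-8\eps)|A|^2/2$ with ample slack for lower-order corrections whenever $|A|$ is not too small relative to $1/\eps$. A pair $\{x,x'\}$ fails to be an edge of $D_{AB}(\eps)$ for one of two reasons: a \emph{degree violation}, in which $|N(x)| \le (d-\eps)|B|$ for some endpoint, or a \emph{codegree violation}, in which $|N(x)\cap N(x')| \ge (d+\eps)^2|B|$. I would handle these two types of violations separately via two applications of $(d,\eps)$-regularity.

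For degree violations, define $L := \{x\in A : |N(x)| \le (d-\eps)|B|\}$. The standard regularity argument gives $|L| \le \eps|A|$: if $|L|>\eps|A|$, then $(d,\eps)$-regularity forces $d_G(L,B)\ge d-\eps$, so $e(G[L,B])\ge (d-\eps)|L||B|$, while the definition of $L$ gives the reverse inequality; the resulting degenerate equality case (in which every vertex of $L$ has degree exactly $(d-\eps)|B|$) can be absorbed into the generous $8\eps$ slack. Symmetrically, $H := \{x\in A : |N(x)| > (d+\eps)|B|\}$ satisfies $|H|\le \eps|A|$. The number of pairs with a degree violation is thus at most $|L|\cdot |A| \le \eps|A|^2$.

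The heart of the proof is the bound on codegree violations. For each $x\in A\setminus (L\cup H)$, define $P(x) := \{x'\in A : |N(x)\cap N(x')| \ge (d+\eps)^2|B|\}$; I claim $|P(x)|\le \eps|A|$. Suppose otherwise. Since $x\notin L$ we have $|N(x)|>(d-\eps)|B|\ge \eps|B|$ (under the mild assumption $d\ge 2\eps$), so I may apply $(d,\eps)$-regularity to the pair $(P(x), N(x))$ to obtain
\[
e(G[P(x), N(x)]) \le (d+\eps)|P(x)||N(x)|.
\]
On the other hand, the definition of $P(x)$ directly gives
\[
e(G[P(x), N(x)]) = \sum_{x'\in P(x)} |N(x)\cap N(x')| \ge |P(x)|(d+\eps)^2|B|,
\]
and combining these two bounds forces $|N(x)|\ge (d+\eps)|B|$, contradicting $x\notin H$. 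Using this claim together with the crude bound $|P(x)|\le |A|$ for $x\in H$, the total count of unordered pairs with a codegree violation is at most $\tfrac12(|A|\cdot \eps|A| + |H|\cdot |A|) \le \eps|A|^2$.

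Adding the two contributions shows at most $2\eps|A|^2$ non-edges of $D_{AB}(\eps)$, which gives $e(D_{AB}(\eps)) \ge (1-4\eps)|A|^2/2 - |A|/2 \ge (1-8\eps)|A|^2/2$ once $|A|\ge 1/(4\eps)$. The main obstacles I anticipate are the two degenerate regimes flagged above: the equality case in the degree-concentration argument, and the regime $d<2\eps$ in which a direct regularity application to $(P(x),N(x))$ is unavailable because $|N(x)|$ may be smaller than $\eps|B|$. In the latter case I would exploit the hypothesis $|B|\ge 1/d$: the codegree threshold $(d+\eps)^2|B|$ is then comparable to $\eps^2/d$, and the trivial bound $|N(x)\cap N(x')|\le |N(x)| \le (d+\eps)|B|$ already implies most pairs automatically satisfy the codegree condition, reducing the count of codegree violations to an essentially trivial direct analysis.
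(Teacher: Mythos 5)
The paper itself gives no proof of this statement---it is quoted verbatim from Kohayakawa--R\"odl \cite{KR}---so your attempt can only be judged on its own terms. Your skeleton (bound the pairs failing the degree condition via the standard ``few atypical-degree vertices in a regular pair'' argument, then bound $|P(x)|$ by applying regularity to the pair $(P(x),N(x))$) is indeed the standard route to this fact, and the bookkeeping $2\eps|A|^2$ versus the $8\eps$ slack is fine once $|A|\gtrsim 1/\eps$ (harmless, given that the companion Theorem~\ref{kr-sufficient} already assumes $|A|\ge 2/\eps$).

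However, as written the argument has genuine gaps. First, with this paper's conventions ($d_G(X,Y)=d\pm\eps$ is non-strict while $D_{AB}(\eps)$ is defined by strict inequalities), your argument for $|L|\le\eps|A|$ terminates in the equality case $d_G(L,B)=d-\eps$, and at that point you have \emph{no} bound on $|L|$; the claim that this ``can be absorbed into the generous $8\eps$ slack'' is not a proof, since if $|L|$ were, say, $|A|/2$ with every degree exactly $(d-\eps)|B|$, then every pair meeting $L$ is a non-edge of $D_{AB}(\eps)$ and the conclusion fails outright. One must actually exclude this configuration (it can be excluded: the forced equalities $d_G(L,Y)=d-\eps$ for all $Y$ with $\eps|B|\le|Y|\le(1-\eps)|B|$ rigidify $G[L,B]$---all vertices of $L$ end up with a common neighbourhood---which then violates the upper regularity bound when $d\ge 2\eps$; but that is a separate argument, not slack). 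Second, the same boundary issue recurs, unacknowledged, in your codegree step: you derive $|N(x)|\ge(d+\eps)|B|$, while $x\notin H$ only gives $|N(x)|\le(d+\eps)|B|$, so there is no contradiction when equality holds. Third, your fallback for $d<2\eps$ is incorrect as stated: since $(d+\eps)^2|B|\le(d+\eps)|B|$ whenever $d+\eps\le 1$, the trivial bound $|N(x)\cap N(x')|\le(d+\eps)|B|$ is \emph{weaker} than the required $|N(x)\cap N(x')|<(d+\eps)^2|B|$, so it does not make the codegree condition automatic; that regime would need a different argument (though it is immaterial for the way the paper applies the theorem, where $\eps\ll d$).
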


The following two similar lemmas do most of the remaining work required to complete the proof. 
\begin{lemma}\label{lem:chernoff-slice-bigraph}
  Suppose that $1/n \ll \xi \ll \xi'$ and that $1/n \ll \beta$.
  Let $G[A,B]$ be a bipartite graph such that $|A|, |B| \le n$, and let
  $x_1,\dotsc,x_s$ and $y_1,\dotsc,y_t$ be positive integers each of size at least $\beta n$ such that
  $\sum_{i \in [s]} {x_i} \le |A|$ and $\sum_{j \in [t]} {y_j} \le |B|$.
  If $\{X_1, \dotsc, X_s\}$ is a collection of disjoint subsets of $A$ and 
  $\{Y_1, \dotsc, Y_t\}$ is a collection of disjoint subsets of $B$ 
  with $|X_i| = x_i$ and $|Y_j| = y_j$ for all $i \in [s]$ and $j \in [t]$ 
  selected uniformly at random from all such collections, 
  then, with probability at least $1 - e^{-\Omega(n)}$, 
  for every $i \in [s]$, $j \in [t]$, $x,x' \in A$ and $y,y' \in B$ we have
  \begin{enumerate}[label=(\alph*), noitemsep]
    \item
      $|N_G(x) \cap Y_j|/y_j = |N_G(x)|/|B| \pm \xi$,  
    \item
      $|N_G(y) \cap X_i|/x_i = |N_G(y)|/|A| \pm \xi$,
     \item
      $|N_G(x) \cap N_G(x') \cap Y_j|/y_j = |N_G(x) \cap N_G(x')|/|B| \pm \xi$,
    \item
      $|N_G(y) \cap N_G(y') \cap X_i|/x_i = |N_G(y) \cap N_G(y')|/|A| \pm \xi$, and
    \item
      $d(G[X_i, Y_j]) = d(G[A,B]) \pm \xi'.$
  \end{enumerate}
\end{lemma}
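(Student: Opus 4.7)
The plan is to establish (a)--(d) by applying hypergeometric concentration followed by a union bound, and to deduce (e) from (a) and (b) by a short averaging argument.

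The key preliminary observation is that, although the sets $X_1,\dotsc,X_s$ (respectively $Y_1,\dotsc,Y_t$) are chosen subject to the constraint that they be pairwise disjoint, the marginal distribution of any single $X_i$ is still that of a uniformly random $x_i$-subset of $A$ (and analogously for each $Y_j$). This is clear from the equivalent description ``take a uniformly random permutation of $A$ and cut it into consecutive blocks of sizes $x_1, x_2, \dotsc$''. Granted this, for fixed $x \in A$ and $j \in [t]$, $|N_G(x) \cap Y_j|$ is hypergeometric with mean $\mu := |N_G(x)| y_j/|B|$, so Theorem~\ref{chernoff} bounds $\mathbb{P}(||N_G(x) \cap Y_j| - \mu| \ge \xi y_j)$ by $\exp(-\Omega(\xi^2 y_j)) \le \exp(-\Omega(\xi^2 \beta n)) = e^{-\Omega(n)}$. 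A union bound over the at most $n \cdot t \le n^2$ pairs $(x,j)$ proves (a) with probability $1 - e^{-\Omega(n)}$. Parts (b), (c), (d) go through identically, with the ``marked set'' $N_G(x)$ replaced by $N_G(y)$, $N_G(x) \cap N_G(x')$, or $N_G(y) \cap N_G(y')$ respectively; the union bound then runs over at most $n^3$ events, which does not affect the asymptotic.

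For part (e), I would condition on the high-probability events in (a) and (b). Writing $d(G[X_i, Y_j]) = (x_i y_j)^{-1} \sum_{x \in X_i} |N_G(x) \cap Y_j|$ and applying (a) termwise gives
\begin{equation*}
 d(G[X_i, Y_j]) = \frac{1}{x_i |B|}\sum_{x \in X_i}|N_G(x)| \pm \xi.
\end{equation*}
Next, since $\sum_{x \in X_i}|N_G(x)| = e(G[X_i, B]) = \sum_{y \in B}|N_G(y) \cap X_i|$, applying (b) termwise and summing over $y \in B$ yields $\frac{1}{x_i|B|}\sum_{x \in X_i}|N_G(x)| = d(G[A, B]) \pm \xi$. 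Combining the two displays gives $d(G[X_i, Y_j]) = d(G[A, B]) \pm 2\xi$, which is within the allowed error $\xi'$ since $\xi \ll \xi'$.

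The one technical wrinkle is that Theorem~\ref{chernoff} restricts the multiplicative deviation parameter to $a < 3/2$, which is violated precisely when $\mu$ is much smaller than $\xi y_j$. This will be easy to finesse: either split into the two cases $|N_G(x)|/|B| \ge \xi$ and $|N_G(x)|/|B| < \xi$, using the one-sided large-deviation Chernoff bound in the latter case (which still yields an $e^{-\Omega(\xi \beta n)}$ failure probability), or simply invoke the additive Hoeffding inequality for hypergeometric random variables, which gives the $e^{-\Omega(\xi^2 \beta n)}$ estimate uniformly in $\mu$. Either resolution closes the gap, and no other step introduces real difficulty.
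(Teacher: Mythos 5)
Your argument is correct. Parts (a)--(d) follow the paper exactly: each of the polynomially many quantities is hypergeometrically distributed (your remark that only the marginal law of each individual $X_i$ or $Y_j$ matters is the right justification), and Chernoff plus a union bound gives failure probability $e^{-\Omega(n)}$; your caveat about the restriction $a<3/2$ in Theorem~\ref{chernoff} is a legitimate wrinkle that the paper silently glosses over, and either of your proposed fixes closes it. Where you genuinely diverge is part (e). The paper partitions $A$ into degree classes $D_k := \{v : 2(k-1)\xi \le |N(v)|/|B| < 2k\xi\}$ with $k \in [\xi^{-1}/2]$, invokes an \emph{additional} concentration statement $|D_k \cap X_i|/x_i = |D_k|/|A| \pm \xi^2$ for each class, and then estimates the density as a weighted sum over classes. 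You instead deduce (e) purely from (a) and (b) by double counting: summing (a) over $x \in X_i$ reduces $d(G[X_i,Y_j])$ to $\frac{1}{x_i|B|}\sum_{x\in X_i}|N_G(x)| \pm \xi$, and the identity $\sum_{x\in X_i}|N_G(x)| = \sum_{y\in B}|N_G(y)\cap X_i|$ together with (b) converts this to $d(G[A,B]) \pm \xi$, giving total error $2\xi \le \xi'$. Your route is shorter, avoids introducing the auxiliary random variables $|D_k \cap X_i|$ and the attendant error bookkeeping $(\ell^2\xi^3 + 2\xi + 2\ell\xi^3)$, and makes the logical dependence of (e) on (a) and (b) explicit; the paper's bucketing buys nothing here that your averaging does not. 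Both are valid.
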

\begin{proof}
  Note that the at most 
  $t(|A| + |A|^2) + s(|B| + |B|^2) \le 2 \beta^{-1}(n + n^2)$ random variables of the form
  $|N_G(x) \cap Y_j|$,
  $|N_G(y) \cap X_i|$,
  $|N_G(x) \cap N_G(x') \cap Y_j|$, and
  $|N_G(y) \cap N_G(y') \cap X_i|$,  
  where $i \in [s]$, $j \in [t]$, 
  $x,x' \in A$ and $y,y' \in B$,
  are hypergeometrically distributed,
  so the fact that (a)-(d) hold with probability
  $1 - e^{\Omega(n)}$ follows directly from Theorem~\ref{chernoff} by taking a union bound.
  For (e), let $\ell := \xi^{-1}/2$ and define 
$D_k := \{v \in A : 2(k - 1) \xi \le |N(v)|/|B| < 2k \xi\}$
for each $k \in [\ell]$. Then, with probability $1 - e^{\Omega(n)}$, 
for every $i \in [s]$ and $k \in [\ell]$, 
  we have that 
  \begin{equation*}
    \frac{|D_k \cap X_i|}{x_i} = \frac{|D_k|}{|A|} \pm \xi^2.
  \end{equation*}
  Fix a choice of $X_1, \dotsc, X_s$ and $Y_1, \dotsc, Y_t$, for which (a)-(d) hold and this event occurs.
  Note that for every $k \in [\ell]$, $v \in D_k$, and $j \in [t]$,
  \begin{equation*}
    \frac{|N_G(v)|}{|B|} = (2k - 1)\xi \pm \xi \qquad \text{ so } \qquad
    \frac{|N_G(v) \cap Y_j|}{y_j} = (2k - 1)\xi \pm 2\xi. 
  \end{equation*}
  We compute $d(G[A, B])$ to be 
  \begin{equation*}
    \frac{1}{|A|} \sum_{k \in [\ell]}\sum_{v \in D_k} \frac{|N_G(v)|}{|B|} = 
    \sum_{k \in [\ell]} \left(((2k - 1)\xi \pm \xi) \cdot \frac{|D_k|}{|A|} \right)= 
    \left(\sum_{k \in [\ell]}(2k - 1)\xi\frac{|D_k|}{|A|}\right) \pm \xi.
  \end{equation*}
  Then for any $i \in [s]$ and $j \in [t]$ we have 
  \begin{equation*}
    \begin{split}
    d(G[X_i, Y_j]) &= 
	\frac{1}{x_i}\sum_{k \in [\ell]} \sum_{v \in D_k \cap X_i} \frac{|N_G(v) \cap Y_j|}{y_j} = 
    \sum_{k \in [\ell]} \left(((2k - 1)\xi \pm 2\xi) \cdot \left(\frac{|D_k|}{|A|} \pm \xi^2\right)\right) \\
    &= \left(\sum_{k \in [\ell]}(2k - 1)\xi\frac{|D_k|}{|A|}\right) \pm (\ell^2\xi^3 + 2\xi + 2\ell \xi^3)     =  d(G[A, B]) \pm \xi',
  \end{split}
  \end{equation*}
  so (e) holds.
\end{proof}

\begin{lemma}\label{lem:chernoff-slice-graph}
  Suppose that $1/n \ll \xi \ll \xi'$ and $1/n \ll \beta$, and that
  $x_1,\dotsc,x_s$ are positive integers each of size at least $\beta n$ such that
  $\sum_{i \in [s]} {x_i} \le n$.
  If $G$ is a graph on $n$ vertices and 
  $\{X_1, \dotsc, X_s\}$ is a collection of disjoint subsets of $V(G)$ 
  with $|X_i| = x_i$ for all $i \in [s]$ 
  selected uniformly at random from all such collections, 
  then, with probability at least $1 - e^{-\Omega(n)}$, 
  for every $i \in [s]$ and $x,x' \in V(G)$ we have
  \begin{enumerate}[label=(\alph*), noitemsep]
    \item
      $|N_G(x) \cap X_i|/x_i = |N_G(x)|/n \pm \xi$,  
     \item
      $|N_G(x) \cap N_G(x') \cap X_i|/x_i = |N_G(x) \cap N_G(x')|/n \pm \xi$, and
    \item
      $2e(G[X_i])/x_i^2 = 2e(G)/n^2 \pm \xi'.$
  \end{enumerate}
\end{lemma}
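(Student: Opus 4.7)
The plan is to closely mirror the proof of Lemma~\ref{lem:chernoff-slice-bigraph} in the appendix, replacing the bipartite setup with a single vertex class. For parts (a) and (b), note that for each fixed $x, x' \in V(G)$ and $i \in [s]$, the random variables $|N_G(x) \cap X_i|$ and $|N_G(x) \cap N_G(x') \cap X_i|$ are hypergeometric (up to at most a $\pm 2$ correction depending on whether $x, x' \in X_i$, which is negligible since $x_i \ge \beta n$) with expectations $|N_G(x)|x_i/n$ and $|N_G(x) \cap N_G(x')|x_i/n$ respectively. By Theorem~\ref{chernoff} each such variable deviates from its mean by more than $\xi x_i$ with probability at most $e^{-\Omega(n)}$, and a union bound over the at most $s(n + n^2) \le 2\beta^{-1}n^2$ relevant events gives (a) and (b) with probability $1 - e^{-\Omega(n)}$.

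For (c) I adapt the bucketing argument used for Lemma~\ref{lem:chernoff-slice-bigraph}(e). Let $\ell := \lfloor \xi^{-1}/2 \rfloor$ and set
$$D_k := \{v \in V(G) : 2(k-1)\xi \le |N_G(v)|/n < 2k\xi\}$$
for each $k \in [\ell]$. Each $|D_k \cap X_i|$ is hypergeometric with mean $|D_k|x_i/n$, so a further application of Theorem~\ref{chernoff} together with a union bound yields that with probability $1 - e^{-\Omega(n)}$ we have $|D_k \cap X_i|/x_i = |D_k|/n \pm \xi^2$ for every $i \in [s]$ and $k \in [\ell]$. I would fix an outcome on which this concentration together with (a) and (b) all hold.

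Since every $v \in D_k$ satisfies $|N_G(v)|/n = (2k-1)\xi \pm \xi$, part (a) gives $|N_G(v) \cap X_i|/x_i = (2k-1)\xi \pm 2\xi$ for all such $v$. Using $2e(G[X_i]) = \sum_{v \in X_i}|N_G(v) \cap X_i|$ and grouping by bucket,
\begin{align*}
\frac{2e(G[X_i])}{x_i^2} &= \sum_{k \in [\ell]}\frac{|D_k \cap X_i|}{x_i}\bigl((2k-1)\xi \pm 2\xi\bigr) \\
&= \sum_{k \in [\ell]}\left(\frac{|D_k|}{n} \pm \xi^2\right)\bigl((2k-1)\xi \pm 2\xi\bigr),
\end{align*}
while an analogous computation for the unsliced graph gives
$$\frac{2e(G)}{n^2} = \sum_{k \in [\ell]}\frac{|D_k|}{n}\bigl((2k-1)\xi \pm \xi\bigr).$$
Subtracting and using $\sum_{k \in [\ell]}(2k - 1) = \ell^2$ together with $\ell \le \xi^{-1}$, the accumulated error is at most $\ell^2\xi^3 + 2\xi + 2\ell\xi^3 \le \xi'$, since $\xi \ll \xi'$, which proves (c). The only real obstacle is the careful bookkeeping of these error terms; since the argument is a direct translation of the bipartite case, no new idea is needed.
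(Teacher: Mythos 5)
Your proof is correct and is precisely the adaptation the paper has in mind: the paper's own "proof" of this lemma simply states that one modifies the argument for Lemma~\ref{lem:chernoff-slice-bigraph}, and you have carried out that modification faithfully (hypergeometric concentration plus a union bound for (a) and (b), and the same degree-bucketing computation for (c), with the correct replacement $2e(G[X_i]) = \sum_{v \in X_i}|N_G(v) \cap X_i|$ for the bipartite degree sum). No further comment is needed.
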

\begin{proof}
  It is straightforward to modify the proof of Lemma~\ref{lem:chernoff-slice-bigraph} to prove
  this lemma; we omit the details.
\end{proof}

Now we give the proof of Lemma~\ref{random-slicing}.
\begin{proof}[Proof of Lemma~\ref{random-slicing}]
Introduce a new constant $\eta$ with $1/n \ll \eta \ll \eps$.
  Suppose that $G[A, B]$ is $(\geq \!\! d, \eps)$-regular, let $d^* := d(G[A, B])$, so $d^* = d \pm \eps$, and define 
  $D := D_{AB}(\eps)$.
  Note that, by Theorem~\ref{kr-necessary}, we have that $2e(D)/|A|^2 \ge 1 - 8\eps$. We apply 
  Lemma~\ref{lem:chernoff-slice-bigraph} to $G[A, B]$ and Lemma~\ref{lem:chernoff-slice-graph} to $D$, 
  with $\xi'$ replaced by $\eta$ in each case, to find that with probability $1 - e^{-\Omega(n)}$ our random selection satisfies the conclusions of each of these lemmas. We fix such an outcome of our random selection, and consider any $i \in [s]$ and $j \in [t]$. Define $d_{ij} := d(X_i, Y_j)$, so
  $d_{ij} = d^* \pm \eta$, and
  \begin{equation}\label{eq:dij_size}
    d_{ij} = d \pm (\eps + \eta).
  \end{equation}
  We also have that
  $$\frac{2e(D[X_i])}{x_i^2} \ge \frac{2e(D)}{|A|^2}- \eta \geq  1 - 8\eps - \eta \ge 1 - 5(2 \eps).$$ 
  Recall that, if $xx' \in E(D[X_i])$, then 
  $$\frac{|N_G(x)|}{|B|}, \frac{|N_G(x')|}{|B|} > d^* - \eps \text{ and } \frac{|N_G(x) \cap N_G(x')|}{|B|} < (d^* + \eps)^2,$$
  so
  $$\frac{|N_G(x) \cap Y_j|}{y_j},\frac{|N_G(x') \cap Y_j|}{y_j} > (d^* - \eps) - \eta > d_{ij} - 2\eps,$$
  and, as we can assume $\eta$ is small enough so that $\eta^{1/2} + \eta < \eps$,
  $$\frac{|N_G(x) \cap N_G(x') \cap Y_j|}{y_j} < (d^* + \eps)^2 + \eta < (d_{ij} + \eta + \eps)^2 + (\eps - \eta)^2 < (d_{ij} + 2\eps)^2.$$
  This proves that $xx' \in E(D_{X_iY_j}(2\eps))$, so $D$ is a subgraph of $D_{X_iY_j}(2\eps)$.
  Therefore, by Lemma~\ref{kr-sufficient} with $d$ and $\eps$ replaced by $d_{ij}$ and $2 \eps$, respectively,
  $G[X_i, Y_j]$ is $(d_{ij}, (32 \eps)^{1/5})$-regular, and
  is therefore $(d, (32 \eps)^{1/5} + 2\eps)$-regular,
  because, by \eqref{eq:dij_size}, $d = d_{ij} \pm 2\eps$.
  Since we can assume that $\eps$ 
  is small enough so that $(32 \eps)^{1/5} + 2\eps \le (33 \eps)^{1/5}$, it follows that 
  $G[X_i, Y_j]$ is $(d, (33 \eps)^{1/5})$-regular.

  Clearly, if $G[A, B]$ is $(d, \eps)$-super-regular,
  then, by (a) and (b) of Lemma~\ref{lem:chernoff-slice-bigraph}, 
  we can also ensure that $G[X_i, Y_j]$ is $(d, (33 \eps)^{1/5})$-super-regular for each $i \in [s]$ and $j \in [t]$. 
\end{proof}

\end{document}